\newcommand {\DM}[1] {{\color{red}\sf DM : [#1]}}
\newtheoremstyle{thmlike}
{8pt}
{3pt}
{\slshape}
{}
{\bfseries}
{.}
{1em}
{}
\newtheoremstyle{deflike}
{8pt}
{3pt}
{}
{}
{\bfseries}
{.}
{1em}
{}
\theoremstyle{thmlike}
\newtheorem{theorem}{Theorem}[section]
\newtheorem{lemma}[theorem]{Lemma}
\newtheorem{corollary}[theorem]{Corollary}
\theoremstyle{deflike}
\newtheorem{definition}[theorem]{Definition}
\newtheorem{remark}[theorem]{Remark}
\newtheorem{hypothesis}[theorem]{HYPOTHESIS}
\newcommand{\inv}{\mathrm{inv}}
\newcommand{\nil}{\mathrm{nil}}
\newcommand{\norm}{\mathrm{norm}}
\newcommand{\opp}{\mathrm{op}}
\newcommand{\Pord}{P-\mathrm{ord}}
\newcommand{\Pword}{P_w\mathrm{-ord}}
\newcommand{\Paord}{P-\mathrm{a.ord}}
\newcommand{\Pwaord}{P_w-\mathrm{a.ord}}
\newcommand{\sub}{\mathrm{sub}}
\newcommand{\tor}{\mathrm{tor}}
\newcommand{\tp}[1]{\prescript{t}{}{#1}} 
\newcommand{\level}[2]{\prescript{}{#1}{#2}} 
\newcommand{\diag}{\operatorname{diag}}
\newcommand{\End}{\operatorname{End}}
\renewcommand{\ker}{\operatorname{ker}}
\renewcommand{\hom}{\operatorname{Hom}}
\newcommand{\isom}{\operatorname{Isom}}
\newcommand{\Sh}{\operatorname{Sh}}
\newcommand{\Gm}{\operatorname{\mathbb{G}_{m}}}
\newcommand{\Ga}{\operatorname{\mathbb{G}_{a}}}
\newcommand{\Gal}{\operatorname{Gal}}
\newcommand{\GL}{\operatorname{GL}}
\newcommand{\Lie}{\operatorname{Lie}}
\newcommand{\SL}{\operatorname{SL}}
\renewcommand{\det}{\operatorname{det}}
\newcommand{\ord}{\operatorname{ord}}
\newcommand{\rk}{\operatorname{rank}}
\newcommand{\tr}{\operatorname{trace}}
\renewcommand{\v}{\operatorname{\nu}} 
\newcommand{\vol}{\operatorname{vol}}
\newcommand{\ind}[2]{\operatorname{\iota}_{#1}^{#2}}
\newcommand{\Ind}{\operatorname{Ind}}
\newcommand{\Rep}{\operatorname{Rep}}
\renewcommand{\r}[2]{\operatorname{r}_{#1}^{#2}}
\newcommand{\cg}[1]{\widetilde{#1}} 
\renewcommand{\AA}{\mathbb{A}}
\newcommand{\Ab}{\mathcal{A}}
\newcommand{\CC}{\mathbb{C}}
\newcommand{\EE}{\mathcal{E}}
\newcommand{\KK}{\mathcal{K}}
\newcommand{\LL}{\mathcal{L}}
\newcommand{\MM}{\mathrm{M}}
\newcommand{\OO}{\mathcal{O}}
\newcommand{\PP}{\mathcal{P}}
\newcommand{\QQ}{\mathbb{Q}}
\newcommand{\bQQ}{\overline{\QQ}}
\newcommand{\RR}{\mathbb{R}}
\newcommand{\SSS}{\mathcal{S}}
\newcommand{\bTT}{\mathbf{T}} 
\newcommand{\ZZ}{\mathbb{Z}}
\newcommand{\bZZ}{\overline{\ZZ}}
\renewcommand{\i}{\iota}
\newcommand{\w}{\omega}
\newcommand{\g}{\mathfrak{g}} 
\newcommand{\m}{\mathfrak{m}}
\newcommand{\p}{\mathfrak{p}} 
\newcommand{\s}{\mathfrak{s}}
\renewcommand{\P}{\mathfrak{P}}
\renewcommand{\k}{\mathfrak{k}}
\renewcommand{\d}{\mathbf{d}} 
\newcommand{\Ss}{S_\square}
\newcommand{\la}{\langle} 
\newcommand{\ra}{\rangle}
\newcommand{\incl}{\mathrm{incl}}
\newcommand{\pr}{\mathrm{pr}} 
\newcommand{\brkt}[2]{\la #1, #2 \ra}
\newcommand{\absv}[1]{\left|#1\right|}
\newcommand {\ol}[1] {\overline{#1}}
\newcommand {\ul}[1] {\underline{#1}}
\newcommand {\wt}[1] {\widetilde{#1}}
\newcommand{\at}{\makeatletter @\makeatother}
\begin{document}
\pagestyle{fancy}
\fancyhf{}
\fancyhead[RO,LE]{\footnotesize\thepage}
\fancyhead[CE]{\footnotesize\leftmark}
\fancyhead[CO]{\footnotesize $p$-ADIC ZETA INTEGRALS}
\renewcommand{\headrulewidth}{0pt}

\title{Bushnell-Kutzko types for $P$-ordinary automorphic representations on unitary groups.}
\author[D. Marcil]{David Marcil}
\date{\today}
\address{David Marcil, Department of Mathematics, Columbia University, New York, NY 10027, USA}
\email{d.marcil\at columbia.edu}
\subjclass[2010]{Primary: 11F70, 11F55; Secondary: 11F33, 11G10, 14G35.}
\keywords{Bushnell-Kutzko types, $P$-ordinary representations, $P$-ordinary modular forms.}

\maketitle

\begin{abstract}
    This paper generalizes a theorem of Hida in \cite{Hid98} on the structure of ordinary representations on unitary groups to $P$-ordinary representations, where $P$ is a general parabolic subgroup of some general linear group. When $P$ is minimal, we recover Hida's theorem which asserts that ordinary subspaces are 1-dimensional. While analogous $P$-ordinary subspaces are infinite-dimensional in general, we use the theory of Bushnell-Kutzko types developed in \cite{BusKut98, BusKut99} to canonically associate a finite-dimensional type to the representation (under minor assumptions) that has multiplicity one in its $P$-ordinary subspace. We simultaneous develop the theory of modular forms on unitary groups with $P$-Iwahoric level structure whose nebentypus is a type (instead of a character) and construct lattices of $P$-ordinary modular forms inside $P$-ordinary automorphic representations. We also obtain direct consequences for the dual notion of $P$-anti-ordinary forms and representations.
\end{abstract}

\tableofcontents

\hrulefill

\section*{Introduction}
In the paper \cite{EHLS}, the four authors construct a $p$-adic $L$-function for ordinary families on unitary groups. This completed a project started more than a decade earlier by three of the four authors in \cite{HLS}. This required the development of several technical results on $p$-adic differential operators, accomplished in great part by the first author in \cite{Eis12}, to obtain a more general Eisenstein measure \cite{Eis15} than the one originally constructed in \cite{HLS}. Fundamental properties of their $p$-adic $L$-function for families are obtained by carefully computing local zeta integrals related to the doubling method \cite{GPSR87} as well as local coefficients of Siegel Eisenstein series \cite{Eis15}. The most technical calculations are for local factors at places above the fixed prime $p$ and a theorem of Hida in \cite{Hid98} establishing the uniqueness (up to scalar) of ordinary vectors plays a crucial role in their analysis.

In this article, we generalize this theorem of Hida to construct a canonical finite-dimensional subspace in the space of $P$-ordinary vectors for a $P$-ordinary representation $\pi$ on a unitary group $G$. Here, $P$ is a parabolic subgroup of a product of general linear groups related to $G$. When $P$ corresponds to (a product of) upper triangular Borel subgroups, the notion of $\pi$ being ``$P$-ordinary'' coincides with the usual notion of being ``ordinary''.

This accomplishes the first step in a broader project of the author to construct a $p$-adic $L$-function for a $P$-ordinary family on $G$, directly generalizing the work of \cite{EHLS}. In upcoming work, the author plans to develop the theory of $P$-ordinary families on unitary groups, inspired by the results of \cite{Pil12} on symplectic groups, and adapt the calculations of \cite{Eis15, EHLS} using the $P$-ordinary vectors constructed here instead of ordinary vectors.

\subsubsection*{Structure of this paper} 
In Section \ref{not and conv}, we first set some notation and conventions, and review the theory of Bushnell-Kutzko types relevant for us. Then, in Section \ref{P mod forms on Sh}, we introduce level subgroups of $G(\ZZ_p)$ that are ``$P$-Iwahoric'' (of some level $r$). Using the geometry of Shimura varieties associated to $G$, this allows us to construct $P$-Iwahoric covers over them. We also introduce the relevant notation to compare the theory on $G = G_1$ and on the unitary group $G_2$ associated to its opposite Hermitian vector space. 

This sets up the background to define holomorphic, $P$-ordinary as well as anti-holomorphic, $P$-anti-ordinary representations on $G_1$ as well as dual notions on $G_2$ in later sections. Simultaneously, it leads us to a natural definition of (holomorphic and anti-holomorphic) modular forms on $G$ whose level structure at $p$ is $P$-Iwahoric and whose nebentypus is a type, instead of a 1-dimensional character. We refer to the latter as a \emph{$P$-nebentypus} to emphasize the distinction.

In Section \ref{can Pord vectors}, we introduce Hecke operators at $p$ related to $P$ and define (holomorphic) $P$-ordinary representations as the ones having simultaneous eigenvectors for all these operators with $p$-adic unit eigenvalues. Equivalently, we define a $P$-ordinary projector $e_P$ from these operators and $\pi$ is $P$-ordinary if and only if its $p$-factor $\pi_p$ contains an $e_P$-fixed vector. When this is the case, $e_P$ determines a \emph{$P$-ordinary subspace} in $\pi_p$ which is typically infinite dimensional. We use the theory of Bushnell-Kutzko types to decompose this space into a direct sum of subspaces of \emph{$P$-ordinary vectors of type $\tau$}, or $(P, \tau)$-ordinary vectors. 

Our first main result (Theorem \ref{holo canonical vector}) describes natural homomorphisms between a type $\tau$ and the corresponding space of $(P, \tau)$-ordinary vectors. This result is stated for local factors of $\pi_p$ at places above $p$. Using well-known results about types and a minor hypothesis (which the author wishes to remove in the future), our second main result (Theorem \ref{Pord structure thm}) rephrases this statement for $\pi_p$ and proves that for a canonical type $\tau$ associated to $\pi_p$, which we called the \emph{BK-type of $\pi$}, the homomorphism constructed actually provides an isomorphism between $\tau$ and the corresponding $(P, \tau)$-ordinary subspace.

Given some fixed $P$-ordinary representation $\pi$ with BK-type $\tau$ (which is a smooth irreducible representation of some $p$-adic compact Lie group contained in $P$), one can tensor $\pi$ by a character $\chi$ of $P$ (that factors through the determinant map). Then, $\pi \otimes \chi$ is now $P$-ordinary with $BK$-type $\tau \otimes \chi$. This plays a more relevant role in upcoming work of the author to construct a $p$-adic family of $P$-ordinary representations containing $\pi$ of dimension equal to the rank $d$ of the Levi subgroup of $P$. Moreover, the isomorphism above allows us to vary a fixed $(P, \tau)$-ordinary vector of $\pi$ $p$-adically in this family. Again, in upcoming work, this allows the author to adapt the crucial calculations of \cite[Section 4]{EHLS} and \cite[Section 2]{Eis15} to construct $(d+1)$-variables $p$-adic $L$-functions on $G$.

In Section \ref{P-a-ord thms}, we define the analogous objects for $P$-anti-ordinary representations on $G = G_1$. Using pairs of contragredient representations, the two notions are dual to each other and we obtain consequences about space of $P$-anti-ordinary vectors from our work in the previous section. We also prove analogous statements on $G_2$. Relying on a canonical identification between $G_1$ and $G_2$, we first obtain identical results by simply replacing $P$ with its opposite parabolic $P^\opp$. However, using standard intertwining operators, we state the analogous result with $P$ instead of $P^\opp$. As a part of this broader project on $p$-adic $L$-functions, this is purely from computational purposes. Namely, in upcoming work of the author, some Rankin-Selberg zeta integrals are evaluated involving $P$-anti-ordinary vectors on both $G_1$ and $G_2$ and the analysis is simpler when both parabolic subgroups are equal instead of opposite to one another.

Finally, in Section \ref{Pord mod forms}, we use classical comparison theorems between coherent cohomology on Shimura varieties and cohomology of Lie algebras to embed integral spaces of $P$-ordinary holomorphic modular forms as lattices inside $P$-ordinary representations. The $P$-nebentypus of these forms at $p$ is directly related to the type of the corresponding $P$-ordinary vectors.

\subsubsection*{Similar results in the literature.} Many of our results are greatly inspired by analogous statements in \cite[Sections 6 and 8]{EHLS} when $P$ is minimal (i.e. a Borel subgroup $B$). The author would like to point out that many statements are quite similar, both in content and in notation. However, the reader should keep in mind that the difference of level structure at $p$, i.e. related to $P$ here instead of $B$, makes our work a genuine generalization of their careful analysis. We try to add a subscript $P$ when relevant to emphasize the distinction but this convention is not always held, especially when the notation already involes a long list of subscripts.

Furthermore, similar notions of ``$P$-ordinary'' have been considered for symplectic group to develop ``$P$-ordinary Hida theory'' (see \cite{Pil12}) and $p$-adic $L$-functions for $P$-ordinary families (\cite{LiuRos20}). However, in both cases, the analogous definitions of $P$-Iwahori subgroups are slightly less general and as a consequence, all the Bushnell-Kutzko types involved are all 1-dimensional. Another goal of this article is to develop the theory to allow types of any dimension. 

One motivation to do so, other than for the sake of generality, is that our more general notions and definitions imply that all (holomorphic cuspidal) automorphic representations of $G$ are trivially $\GL(n)$-ordinary, where $n$ is the dimension of the Hermitian vector space associated to $G$. However, if we restrict our attention and only involve 1-dimensional types, this is no longer true. In fact, in this case, a necessary condition for $\pi$ to be $\GL(n)$-ordinary would be that its local factors at places above $p$ contained an $\SL(n)$-fixed vector. Our goal is to avoid such restrictions. With this more general notion of being $P$-ordinary, the case of $P = \GL(n)$ in our broader project leads to the construction of a 2-variable $p$-adic $L$-function associated to any (holomorphic cuspidal) automorphic representation $\pi$ of $G$.

\subsubsection*{Acknowledgments} I thank Michael Harris who first suggested that I look at the work of \cite{EHLS} and adapt it to the $P$-ordinary setting. His countless insights and comments greatly helped me to obtain the results of this article, which is roughly the first third of the thesis he supervised.

\section{Notation and conventions} \label{not and conv}
Let $\bQQ \subset \CC$ be the algebraic closure of $\QQ$ in $\CC$. For any number field $F \subset \bQQ$, let $\Sigma_F$ denote its set of complex embedding $\hom(F, \CC) = \hom(F, \bQQ)$.

Throughout this article, we fix a CM field $\KK \subset \bQQ$ with ring of integers $\OO = \OO_{\KK}$. Let $\KK^+$ be the maximal real subfield of $\KK$ and denote its ring of integers as $\OO^+ = \OO_{\KK^+}$. Let $c \in \Gal(\KK/\KK^+)$ denote complex conjugation, the unique nontrivial automorphism. Given a place $v$ of $\KK$, we usually denote $c(v)$ as $\ol{v}$.

Let $\ZZ(1) \subset \CC$ be the kernel of the exponential map $\exp : \CC \to \CC^\times$, a free rank one $\ZZ$-module with noncanonical basis $2\pi\sqrt{-1}$. For any commutative ring $R$, denote $R \otimes \ZZ(1)$ by $R(1)$.


\subsection{CM types and local places} \label{CM types}
Fix an integer prime $p$ that is unramified in $\KK$. Throughout this paper, we assume the following :

\begin{hypothesis}\label{above p split}
	Each place $v^+$ of $\KK^+$ above $p$ split as $v^+ = v \bar{v}$ in $\KK$.
\end{hypothesis}

This hypothesis plays a crucial role in our analysis of the local factors at place above $p$ of the automorphic representations considered in later sections. 

Fix an algebraic closure $\bQQ_p$ of $\QQ_p$ and an embedding $\incl_p : \bQQ \hookrightarrow \bQQ_p$. Define
\[
	\bZZ_{(p)} = \{z \in \bQQ : \v_p(\incl_p(z)) \geq 0 \} \ ,
\]
where $\v_p$ is the canonical extension to $\bQQ_p$ of the normalized $p$-adic valuation on $\QQ_p$. 

Let $\CC_p$ be the completion of $\bQQ_p$. The map $\incl_p$ yields an isomorphism between its valuation ring $\OO_{\CC_p}$ and the completion of $\bZZ_{(p)}$ which extends to an isomorphism $\i : \CC \xrightarrow{\sim} \CC_p$.

Fix an embedding $\i_\infty : \QQ \hookrightarrow \CC$ such that $\incl_p = \i \circ \i_\infty$ and identify $\bQQ$ with its image in both $\CC$ and $\CC_p$.

Given $\sigma \in \Sigma_{\KK}$, the embedding $\incl_p \circ \sigma$ determines a prime ideal $\p_\sigma$ of $\Sigma_\KK$. There may be several embeddings inducing the same prime ideal. Similarly, given a place $w$ of $\KK$, let $\p_w$ denote the corresponding prime ideal of $\OO$.

Under Hypotesis \ref{above p split}, for each place of $v^+$ of $\KK^+$ above $p$, there are exactly two primes of $\OO$ above $v^+$. Fix a set $\Sigma_p$ containing exactly one of these prime ideals for each such place $v^+$. The set $\Sigma = \{\sigma \in \Sigma_\KK \mid \p_\sigma \in \Sigma_p\}$ is a CM type of $\KK$ (see \cite[p.202]{Kat78}).


\subsection{Bushnell-Kutzko Types} \label{BK types}
To discuss the local theory of $P$-ordinary representations in later sections, let us recall the theory of Bushnell-Kutzko types and covers, adapting the notions of \cite{BusKut98} and \cite[Section 3]{Lat21} to our setting.

Fix a place $w$ of $\OO$ and write $F = \KK_w$. Similarly, let $\OO_F$ denote $\OO_{\KK_w}$. Let $G = \GL_n(F)$ for some integer $n \geq 1$.

\subsubsection{Parabolic inductions and Jacquet modules.}
For any parabolic subgroup $P$ of $G$, let $L$ and $P^u$ denote its Levi factor and unipotent radical, respectively. Let $\delta_P : P \to \CC^\times$ denote its modulus character.

Recall that $\delta_P$ factors through $L$. Moreover, if $P$ is the standard parabolic subgroup associated to the partition $n = n_1 + \ldots + n_t$, one has
\begin{equation}
	\delta_P(l) = \prod_{j=1, \ldots, t} \absv{\det(l_j)}^{-\sum_{i < j} n_i + \sum_{i > j} n_i}
\end{equation}
for any $l = (l_1, \ldots, l_t)$ in $L = \prod_{j=1}^t \GL_{n_j}(F)$. In particular, $\delta_P$ agrees with $\delta_B$ on the center $Z(L)$ of $L$, where $B$ is the Borel upper triangular subgroup (associated to the partition $n = 1 + \ldots + 1$).

Given a smooth representation $(\sigma, W)$ of $L$, let $\Ind_P^{G}(\sigma, W)$ denote the classical (unnormalized) parabolic induction functor from $P$ to $G$. Moreover, given a representation $(\pi, V)$ of $G$, let $(\pi_P, V_P)$ denote the classical $P$-Jacquet functor. We often consider $\sigma$ and $\pi_P$ as both representations of $L$ and $P$ without comments.

\begin{definition}
	The \emph{normalized} parabolic induction functor is
	\[
		\ind{P}{G}(\sigma, W) = \Ind_P^{G}(\sigma \otimes \delta_P^{1/2}, W)
	\]
	and the \emph{normalized} Jacquet functor is
	\[
		\r{P}{G}(\pi, V) = (\pi_P \otimes \delta_P^{-1/2}, V_P)
	\]
\end{definition}

We often simply write $\ind{P}{G} \sigma$ (resp. $\ind{P}{G} W$) and $\r{P}{G} \pi$ (resp. $\r{P}{G} V$) when the associated vector space (resp. representation) is clear from context.

The Frobenius reciprocity theorem \cite[Theorem 2.4.1]{Cas95} states
\[
    \hom_{G}
    (
        \pi,
        \ind{P}{G} \sigma
    ) = 
    \hom_P
    (
        \r{P}{G} \pi, 
        \sigma
    )
\]

\subsubsection{Supercuspidal support} \label{sc support}
A theorem of Jacquet (see \cite[Theorem 5.1.2]{Cas95}) implies that given any irreducible representation $\pi$ of $G$, one may find a parabolic subgroup $P$ of $G$ with Levi subgroup $L$ and a supercuspidal representation $\sigma$ of $L$ such that $\pi \subset \ind{P}{G} \sigma$. .

The pair $(L, \sigma)$ is uniquely determined by $\pi$, up to $G$-conjugacy and one refers to this conjugacy class as the \emph{supercuspidal support} of $\pi$.

Consider two pairs $(L, \sigma)$ and $(L', \sigma')$ consisting of a Levi subgroup of $G$ and one of its supercuspidal representation. One says that they are \emph{$G$-inertially equivalent} if there exists some $g \in G$ such that $L' = g^{-1}Lg$ and some unramified character $\chi$ of $L'$ such that $\prescript{g}{}{\sigma} \cong \sigma' \otimes \chi$, where $\prescript{g}{}{\sigma}(x) = \sigma(gxg^{-1})$. We write $[L, \sigma]_G$ for the $G$-inertial equivalence class of $(L, \sigma)$ and let $\mathfrak{B}(G)$ for the set of such classes.

For each $\s \in \mathfrak{B}(G)$, let $\Rep^\s(G)$ denote the full subcategory of $\Rep(G)$ whose objects are the representations such that all their irreducible subquotients have inertial equivalence class $\s$.

The Bernstein-Zelevinsky geometric lemma (see \cite[Section VI.5.1]{Ren10}) implies that $\ind{P}{G} \sigma$ is an object of  $\Rep^\s(G)$, where $\s = [L, \sigma]_G$.

\begin{definition}[\cite{BusKut98}]
    Let $J$ be a compact open subgroup of $G$ and $\tau$ be an irreducible representation of $J$. Let $\Rep_\tau(G)$ denote the full subcategory of $\Rep(G)$ whose objects are the representations generated over $G$ by their $\tau$-isotypic subspace. We say that $(J, \tau)$ is an $\s$-type if $\Rep_\tau(G) = \Rep^\s(G)$. 
\end{definition}

If $\pi$ is an irreducible supercuspidal representation of $G$ with inertial support $\s$, then one can easily construct an $\s$-type $(J, \tau)$, see \cite[Section 5]{BusKut98}. By \cite[Proposition 5.6]{BusKut98}, the complex vector space $\hom_{J}(\tau, \pi)$ is 1-dimensional.

Furthermore, it follows from \cite[Theorem 1.3]{Pas05} that there exists a unique (up to isomorphism) representation $\tau$ of $K = G(\OO_F)$ such that $(K, \tau)$ is an $\s$-type. We refer to this unique ``maximal'' type of $\s$ as the \emph{BK-type} of the supercuspidal representation $\pi$.

\section{$P$-nebentypus of modular forms on unitary Shimura varieties.} \label{P mod forms on Sh}

In this section, we introduce the main algebraic groups of interest for this paper. We are mostly concerned about its structure over $\ZZ_p$ and construction of particular $p$-adic parabolic subgroups $P$. Furthermore, we analyse the geometry of the associated Shimura varieties and consider automorphic vector bundles over them (of a fixed weight $\kappa$ and \emph{$P$-nebentypus} $\tau$). This allows to discuss the theory of modular forms whose $p$-level structure is ``$P$-Iwahoric''. This sets up the background to discuss (holomorphic and anti-holomorphic) cuspidal representations that have a particular behavior under the action of the $P$-Iwahori subgroup in the next sections. We follow the standard approach and material of \cite{Hid04, CEFMV, EHLS}.

\subsection{Unitary Groups} \label{unitary groups} 

Let $V$ be a finite-dimensional $\KK$-vector space, equipped with a pairing $\brkt{\cdot}{\cdot}_{V}$ that is Hermitian with respect to the quadratic extension $\KK/\KK^+$. Write $n = \dim_\KK V$.

Let $\delta \in \OO$ be totally imaginary and prime to $p$ and define $\brkt{\cdot}{\cdot} = \tr_{\KK / \QQ}(\delta \brkt{\cdot}{\cdot}_{V})$. This choice of $\delta$ and our Hypothesis \eqref{above p split} ensure the existence of an $\OO$-lattice $L \subset V$ such that the restriction of $\brkt{\cdot}{\cdot}$ to $L$ is integral and yields a perfect pairing on $L \otimes \ZZ_p$.

For each $\sigma \in \Sigma_{\KK}$, let $V_{\sigma}$ denote $V \otimes_{\KK, \sigma} \CC$. It has a $\CC$-basis diagonalizing the pairing $\brkt{\cdot}{\cdot}$. The only eigenvalues must be $\pm 1$, say that $1$ (resp. $-1$) has multiplicity $r_{\sigma}$ (resp. $s_{\sigma}$). We order the basis so that the $+1$-eigenvectors appear first. Fixing such a basis, let $h_{\sigma} : \CC \to \End_{\RR}(V_{\sigma})$ be $h_{\sigma} = \diag(z 1_{r_{\sigma}}, \bar{z} 1_{s_{\sigma}})$.

Let $h = \prod_{\sigma \in \Sigma} h_{\sigma} : \CC \to \prod_{\sigma \in \Sigma} \End_{\RR}(V_{\sigma})$ and assume that $h$ is \emph{standard} (see \cite[Section 2.3.2]{EHLS}). Since $\Sigma$ is a CM type of $\KK$, one has a canonical identification
\[
    \prod_{\sigma \in \Sigma}
        \End_{\RR}(V_{\sigma}) 
    = 
        \End_{\KK^+ \otimes \RR}(V \otimes \RR)
\]

The tuple 
$
    \PP = 
        (
            \KK, c, \OO, L, 
            2\pi\sqrt{-1}\brkt{\cdot}{\cdot}, h
        )
$ 
is a PEL datum of unitary type, as defined in \cite[Section 2.1-2.2]{EHLS}. It has an associate group scheme $G = G_\PP$ over $\ZZ$ whose $R$-points are
\[
	G(R) = \{ (g, \nu) \in \GL_{\OO \otimes R}(L \otimes R) \times R^\times \mid \brkt{gx}{gy} = \nu \brkt{x}{y}, \forall x, y \in L \otimes R \},
\]
for any commutative ring $R$. In particular, $G_{/\QQ}$ is a reductive group. Moreover, the assumptions on $p$ imply that $G_{/\ZZ_p}$ is smooth and $G(\ZZ_p)$ is a hyperspecial maximal compact of $G(\QQ_p)$.

\subsubsection{Hodge structure.} \label{Hodge structure}
The homomorphism $h$ determines a pure Hodge structure of weight $-1$ on $V_\CC = L \otimes \CC$, i.e. $V = V^{-1, 0} \oplus V^{0,-1}$ and $h(z)$ acts as $z$ on $V^{-1,0}$ and as $\bar{z}$ on $V^{0,-1}$. In particular, the $\OO \otimes \CC$-submodule $V^0 \subset V$ defined as the degree 0 piece of the corresponding Hodge filtration is simply $V^{-1,0}$. 

For each $\sigma \in \Sigma_{\KK}$, let $a_{\sigma} = \dim_{\CC} (V^0 \otimes_{\OO \otimes \CC, \sigma} \CC)$ and $b_{\sigma} = n - a_{\sigma}$. The \emph{signature} of $h$ is defined as the collection of pairs $\{ (a_{\sigma}, b_{\sigma} )_{\sigma \in \Sigma_\KK} \}$. Throughout this paper, we assume :

\begin{hypothesis}[Ordinary hypothesis]
	For all embeddings $\sigma, \sigma' \in \Sigma_{\KK}$, if $\p_{\sigma} = \p_{\sigma'}$, then $a_{\sigma} = a_{\sigma'}$.
\end{hypothesis}

Therefore, given a place $w$ of $\KK$ above $p$, one can define $(a_{w}, b_{w}) := (a_{\sigma}, b_{\sigma})$, where $\sigma \in \Sigma_\KK$ is any embedding such that $\p_\sigma = \p_w$. Observe that $(a_{\sigma}, b_{\sigma}) = (r_{\sigma}, s_{\sigma})$ is $\sigma \in \Sigma$. Otherwise, one has $(a_{\sigma}, b_{\sigma}) = (s_{\sigma}, r_{\sigma})$.

\subsection{Structure of $G$ over $\ZZ_p$.}
In this section, we introduce the preliminary notions that allows us to later study automorphic representations that are \emph{ordinary with respect to some parabolic subgroup} of $G$.

\subsubsection{Comparison to general linear groups.}
Consider the factorization $\OO \otimes \ZZ_p = \prod_{w \mid p} \OO_{w}$ as the product runs over all primes $w$ of $\KK$ above $p$. This induces a decomposition $L \otimes \ZZ_p = \prod_{w \mid p} L_w$ and a canonical $\ZZ_p$-isomorphism
\begin{equation}
    \GL_{
        \OO \otimes \ZZ_p
    }(L \otimes \ZZ_p) 
        \xrightarrow{\sim} 
    \prod_{w \mid p} 
        \GL_{\OO_w}(L_w), 
            \ \ \ \ g \mapsto (g_{w}) \ .
\end{equation}

One obtains an isomorphism
\begin{equation} \label{prod G over Zp}
    G_{/\ZZ_p} 
        \xrightarrow{\sim} 
    \Gm \times 
    \prod_{w \in \Sigma_p} 
        \GL_{\OO_w}(L_w), 
            \ \ \ \ (g, \nu) \mapsto (\nu, (g_{w})) \ .
\end{equation}

Our assumption above about the pairing $\brkt{\cdot}{\cdot}$ implies that for each $w \mid p$, there is an $\OO_{B,w}$-decomposition of $L_w = L_w^+ \oplus L_w^-$ such that 
\begin{enumerate}
	\item $\rk_{\OO_w}{L_w^+} = a_{w}$ and $\rk_{\OO_w}{L_w^-} = b_{w}$;
	\item Upon restricting $\brkt{\cdot}{\cdot}$ to $L_w \times L_{\bar{w}}$, the annihilator of $L_w^\pm$ is $L_{\bar{w}}^{\pm}$. Hence, one has a perfect pairing $L_w^+ \oplus L_{\bar{w}}^- \to \ZZ_p(1)$, again denoted $\brkt{\cdot}{\cdot}$.
\end{enumerate}

Fix dual $\OO_w$-bases (with respect to the perfect pairing above) for $L_w^+$ and $L_{\bar{w}}^-$. They yield identifications
\begin{equation} \label{GL(ei Lw) basis}
    \begin{tikzcd}
            \GL_{\OO_w}(L_w^+) 
                \arrow[r, "\cong"] 
        &
            \GL_{a_{w}}(\OO_w) 
        &
            \GL_{b_{\bar{w}}}(\OO_{\bar{w}}) 
                \arrow[r, "\cong"] 
        &
            \GL_{\OO_w}(L_{\bar{w}}^-)
    \end{tikzcd}
\end{equation}
as well as an isomorphism $\GL_{\OO_w}(L_w) \cong \GL_{n}(\OO_w)$ such that the obvious map
\[
	\GL_{\OO_w}(L_w^+) \times \GL_{\OO_w}(L_w^-) \hookrightarrow \GL_{\OO_w}(L_w)
\]
is simply the diagonal embedding of block matrices.

Let $H := \GL_{\OO \otimes \ZZ_p}(L^+)$. Then, the identification \eqref{GL(ei Lw) basis} above induces a canonical isomorphism
\begin{equation} \label{def H}
    H \cong
    \prod_{w \mid p}
        \GL_{a_{w}}(\OO_w) = 
    \prod_{w \in \Sigma_p}
        \GL_{a_{w}}(\OO_w) \times 
        \GL_{b_{w}}(\OO_w)
\end{equation}

\subsubsection{Parabolic subgroups of $G$ over $\ZZ_p$.} \label{level at p}
For $w \mid p$, let 
\[
	\d_{w} = \left( n_{w,1}, \ldots, n_{w, t_{w}} \right)
\]
be a partition of $a_{w} = b_{\bar{w}}$. Let $P_{\d_{w}} \subset \GL_{a_{w}}(\OO_w)$ denote the standard parabolic subgroup corresponding to $\d_{w}$. 

Let $P_H \subset H$ be the $\ZZ_p$-parabolic that corresponds to the products of all the $P_{\d_{w}}$ via the isomorphism \eqref{def H}. We denote the unipotent radical of $P_H$ by $P_H^u$. 

We work with the Levi factor $L_H = P_H / P_H^u$ of $P_H$ as well as its maximal subtorus $T_H$. Note that $T_H$ does not depend on the choice of partitions. Furthermore, elements of $L_H$ are identified with collections of block-diagonal matrices, with respect to the partitions $\d_{w}$, via \eqref{def H}. 

Let $P^+ \subset G_{/\ZZ_p}$ be the parabolic subgroup that stabilizes $L^+$ and such that
\begin{equation} \label{def P+}
    P^+ 
        \twoheadrightarrow 
    \Gm \times P_H \subset \Gm \times H
\end{equation}
where the map to the first factor is the similitude character $\nu$ and the map to the second factor is projection to $H$. 

For $w \in \Sigma_p$, let $P_{w}$ be the parabolic subgroup of $\GL_{n}(\OO_w)$ given by
\begin{equation} \label{local parabolics}
    P_{w} = 
    \left\{
        \begin{pmatrix}
		A & B \\
		0 & D
        \end{pmatrix} 
            \in \GL_{n}(\OO_w) \mid
                A \in P_{\d_w}, 
                D \in P^{\opp}_{\d_{\ol{w}}}
    \right\} 
\end{equation}
and set $P = \prod_{w \in \Sigma_p} P_{w}$.

We naturally identify $P$ as a subgroup of $G_{/\ZZ_p}$. Let $P^u$ be the unipotent radical of $P$, $L_P = P/P^u$ be its Levi factor and $T_P$ be its maximal subtorus. The projection $P^+ \twoheadrightarrow \Gm \times P_H$ induces a natural isomorphism $L_P \cong L_H$. Its restrictions to maximal subtori yields the identity map $T_P = T_H$.

\begin{remark} \label{trivial partition}
    The trivial partition of $a_{w}$ is $(1, \ldots, 1)$ (of length $t_{w} = a_{w}$). If the partitions $\d_{w}$ and $\d_{\bar{w}}$ are both trivial, we write $B_{w}$ instead of $P_{w}$. In that case, $L_B = T_B = T_H$.
\end{remark}

Our choices of bases above imply that under the isomorphisms \eqref{prod G over Zp} and \eqref{GL(ei Lw) basis}, $P^+$ corresponds to
\begin{equation}
	P^+ \xlongrightarrow{\sim} \Gm \times P \ .
\end{equation}

\begin{definition} \label{def PIwahori}
    We define the $P$-Iwahori subgroup of $G$ of level $r \geq 0$ as
    \[
        I_r^0 = I_{P,r}^0 :=
        \left\{
            g \in G(\ZZ_p) \mid 
                g \text{ mod } p^r 
                \in P^+(\ZZ_p/p^r \ZZ_p)
        \right\}
    \]
    and the pro-$p$ $P$-Iwahori subgroup $I_r = I_{P,r}$ of $G$ of level $r$ as 
    \[
        I_r = I_{P,r} :=
        \left\{
            g \in G(\ZZ_p) \mid 
                g \text{ mod } p^r 
                \in (\ZZ_p/p^r\ZZ_p)^\times 
                \times P^u(\ZZ_p/p^r \ZZ_p)
        \right\}.
    \]

    Note that for $r = 0$, we simply have $I_{P,0} = I_{P,0}^0 = G(\ZZ_p)$.
\end{definition}

\begin{remark}
    We refrain from referring to $I_r^0$ as a \emph{parahoric} subgroup of $G$. This terminology is usually reserved for stabilizers of points in Bruhat-Tits building. We make no attempt here to introduce our construction from the point of view of these combinatorial and geometric structures.
\end{remark}

The inclusion of $L_P(\ZZ_p)$ in $I_r^0$ yields a canonical isomorphism 
\begin{equation}
    L_P(\ZZ_p/p^r\ZZ_p) \xrightarrow{\sim} I_r^0/I_r \ .
\end{equation}

For each $w \in \Sigma_p$, one similarly defines $I_{w, r}^0$ and $I_{w, r}$ by replacing $P^+$ by $P_{w}$ and working in $\GL_{n}(\OO_w)$ instead of $G(\ZZ_p)$. Let
\[
    I_r^{\GL} = 
    \prod_{w \in \Sigma_p}
        I_{w, r} 
        \ \ \ \text{and} \ \ \ 
    I_r^{0,\GL} = 
    \prod_{w \in \Sigma_p} 
        I_{w, r}^0 \ ,
\]
so that $I_{r}$ and $I_{r}^0$ correspond to $\ZZ_p^\times \times I_{P,r}^{\GL}$ and $\ZZ_p^\times \times I_{P,r}^{0, \GL}$ respectively, via the isomorphisms \eqref{prod G over Zp} and \eqref{GL(ei Lw) basis}.

\begin{remark}
    Later, we will consider various modules with an action of $G$ and define ``$P$-ordinary'' submodules. Technically, it would be more accurate to refer to them as $P^+$-ordinary submodules. Similarly, the groups defined above could be called (pro-$p$) $P^+$-Iwahori subgroups. In any case, there should not be any confusion between $P$ and $P^+$.
\end{remark}

\subsubsection{Conventions for the opposite unitary group of $G$.} \label{P1 P2 P3 P4}
Consider the PEL datum $\PP = (\KK, c, \OO, L, \brkt{\cdot}{\cdot}, h)$ of unitary type associated to a finite-dimensional hermitian $\KK$-vector space $(V, \brkt{\cdot}{\cdot})$ as above. Recall that there is a fixed $\OO \otimes \ZZ_p$-decomposition $L \otimes \ZZ_p = L^+ \oplus L^-$.

We sometimes write $\PP_1$ for $\PP$ and similarly set $L_1 := $, $\brkt{\cdot}{\cdot}_1 := \brkt{\cdot}{\cdot}$ and $h_1 := h$. Define
\[
    \PP_2 = 
    (
        \KK, c, \OO, L_2, 
        \brkt{\cdot}{\cdot}_2, 
        h_2
    ) := 
    (
        \KK, c, \OO, L, 
        -\brkt{\cdot}{\cdot}, 
        h \circ \overline{(\cdot)}
    )
\]
which is clearly the datum associated to $V$ but equipped with the opposite Hermitian pairing $-\brkt{\cdot}{\cdot}$. When we wish to distinguish those PEL datum, we write $G_1 := G_{\PP_1}$ and $G_2 := G_{\PP_2}$. One has an obvious canonical identification $G_1(\AA) = G_2(\AA)$.

All of the definitions above can therefore be made with $\PP_2$ instead of $\PP_1$. To compare the relevant results on these two groups, we choose the fixed $\OO \otimes \ZZ_p$-decomposition for $L_2 \otimes \ZZ_p = L_2^+ \oplus L_2^-$ to be $L_2^\pm := L^\mp$. Furthermore, the signature of $G_2$ at $w \in \Sigma_p$ is now $(a_{\ol{w}}, b_{\ol{w}}) = (b_w, a_w)$. Therefore, when working with $G_2$, we fix the partition of $a_{\ol{w}}$ to be the partition $d_{\ol{w}}$ of $b_w$ chosen above.

\begin{remark}
    We often refer to $(V, \brkt{\cdot}{\cdot})$ simply by $V$ and $(V, -\brkt{\cdot}{\cdot})$ simply by $-V$. The objects associated to each of them sometimes have subscripts $V$ or $-V$ to emphasize the relevant PEL datum. This convention will be reminded several times throughout the article to avoid confusion, especially in Section \ref{Pord and Paord on G_2}.
\end{remark}

\subsection{Unitary Shimura varieties of level $I_{P,r}$ at $p$.} \label{mod space and Shi var}
The results of Sections \ref{can Pord vectors} and \ref{P-a-ord thms} can be obtained while only working with moduli spaces associated to $\PP$ over $F$, the reflex field of $\PP = \PP_1$. However, in Section \ref{Pord mod forms}, we use these results to compare ``$P$-ordinary subspaces'' (to be defined later) with $p$-integral spaces of modular forms. Therefore, in this section, we introduce the relevant spaces over $F$ and over $\OO_F \otimes \ZZ_{(p)}$ simultaneously, where $\OO_F$ denotes the ring of integers of $F$. 

\begin{remark}
    In the $p$-integral case, we assume first that our level $K$ is hyperspecial at $p$, so our treatment here follows \cite[Section 2.2]{EHLS} and introduces the notions relevant to our situation. However, in Section \ref{Shi var level Kr}, we introduce level structures at $p$ that are more general than the one considered in \cite{EHLS}.
\end{remark}

Let $\square = \{p\}$ or $\emptyset$ and define $\Ss = \OO_F \otimes \ZZ_{(\square)}$. Let $K^\square \subset G(\AA_f^\square)$ be any open compact subgroup and set
\[
    K = \begin{cases}
        K^\square, & \text{if } \square = \{0\}, \\
        G(\ZZ_p)K^\square, & \text{ otherwise.}
    \end{cases}
\]

Then, one may define the moduli problem $\MM_{K, \square} = \MM_{K, \square}(\PP)$ as the functor that assigns to any locally noetherian $S_\square$ scheme $T$ the set of equivalence classes of quadruples $\ul{A} = (A, \lambda, \iota, \alpha)$, where
\begin{enumerate}
    \item $A$ is an abelian scheme over $A$;
    \item $\lambda : A \to A^\vee$ is a polarization. If $\square = \{p\}$, this polarization is prime-to-$p$;
    \item $\iota : \Ss \hookrightarrow \End_T A \otimes \ZZ_{(\square)}$ such that $\iota(b)^\vee \circ \lambda = \lambda^\vee \circ \iota(\ol{b})$;
    \item $\alpha$ is a $K^\square$-level structure, see \cite[Section 2.1]{EHLS};
    \item $\Lie_T A$ satisfies the Kottwitz determinant condition defined by $(L \otimes R, \brkt{\cdot}{\cdot}, h)$, see \cite[Definition 1.3.4.1]{Lan13};
\end{enumerate}
and two quadruples $(A, \lambda, \iota, \alpha)$ and $(A', \lambda', \iota', \alpha')$ are equivalent if there exists some prime-to-$\square$ isogeny $f : A \to A'$ such that
\begin{enumerate}
    \item $\lambda$ and $f^\vee \circ \lambda' \circ f$ are equal, up to multiplication by some positive element in $\ZZ_{(\square)}^\times$;
    \item $\iota'(b) \circ f = f \circ \iota(b)$, for all $b \in \OO_F$;
    \item $\alpha' = f \circ \alpha$.
\end{enumerate}

If $K$ is \emph{neat} (see \cite[Definition 1.4.1.8.]{Lan13}), then there exists a smooth, quasi-projective $\Ss$-scheme that represents this moduli problem, which we still denote by $\MM_{K, \square}$. One readily sees that $\MM_{K, \emptyset}$ is canonically isomorphic to the base change of $\MM_{K, \{p\}}$ from $\OO_F \otimes \ZZ_{(p)}$ to $F$. Therefore, when the base ring $\Ss$ is clear from context, we simply write $\MM_K$ for $\MM_{K, \square}$.

\subsubsection{Toroidal compactifications.}
We recall the existence of toroidal compactifications of the moduli spaces above constructed in \cite{Lan13}. When $\square = \{p\}$, these generalizes the known toroidal compactifications for $\square = \emptyset$. Note that these are associated to \emph{smooth projective polyhedral cone decompostions}. Since the exact definition of the later plays no role in this article, we do not introduce this notion precisely. 

The only properties relevant for us are that given such a polyhedral cone decomposition $\Omega$, there exists a smooth toroidal compactification $\MM^\tor_{K, \Omega}$ of $\MM_K$ over $S_\square$, for both $\square = \emptyset$ and $\{p\}$, and that there exists a partial ordering on the set of such $\Omega$'s by \emph{refinements}. Given two polyhedral cone decompositions $\Omega$ and $\Omega'$, if $\Omega'$ refines $\Omega$, then there is a canonical proper surjective map $\pi_{\Omega', \Omega} : \MM_{K, \Omega'}^\tor \to \MM_{K, \Omega}^\tor$ which restricts to the identity on $\MM_K$. We denote the tower $\{\MM_{K, \Omega}^\tor\}$ by $\MM_K^\tor$. We often refer to the tower as if it were a single scheme and do not emphasize the specific compatible choices of $\Omega$ in some constructions. See \cite[Section 2.4]{EHLS} for more details.

\subsubsection{Compactified Shimura varieties of level $K_{P,r}$.} \label{Shi var level Kr}
Over the reflex field $F$, the moduli space $\MM_{K}(\PP)$ is the union of finitely many copies of the canonical model of the Shimura variety associated to $(G, X_\PP)$, where $X_\PP$ denote the $G(\RR)$-conjugacy class of $h$, see \cite[Section 8]{Kot92} for details.

More precisely, let $V^{(1)}, \ldots, V^{(k)}$ be representatives for the isomorphism classes of all hermitian vector spaces that are locally isomorphic to $V$ at every place of $\QQ$. As explained in \cite[Section 2.3.2]{CEFMV}, it is well-known that there are finitely many such classes, in fact $k = |\ker^1(\QQ, G)|$, where 
\[
    \ker^1(\QQ, G) = \ker\left( H^1(\QQ, G) \to \prod_v H^1(\QQ_v, G) \right).
\]

Then, $\MM_{K} = \MM_{K, \emptyset}$ is the disjoint union of isomorphic $F$-schemes $\MM_{K, V^{(j)}}$ naturally indexed by the $V^{(j)}$. Assume that $V^{(1)} = V$ and denote the scheme-theoretic of $\MM_{K, V}$ in $\MM_{K, \square}$ by $\level{K}{\Sh_\square}(V)$. Again, we often simplify the notation to $\level{K}{\Sh}(V)$ (or even $\level{K}{\Sh}$) when the choice of $\square = \emptyset$ or $\{p\}$ is clear from context. In particular, $\level{K}{\Sh}$ is a smooth, quasi-projective $S_\square$-scheme. We refer to $\level{K}{\Sh}$ as a \emph{Shimura variety} of level $K$ (associated to $\PP$) and $\MM_K$ as a \emph{moduli space}.

In what follows, we work with $\square = \{p\}$, hence $K = G(\ZZ_p)K^p$ as in the beginning of Section \ref{mod space and Shi var}. We now introduce a more general level structure at $p$. To do so, we first need to introduce covers of $\MM_K$ and $\MM_K^{\tor}$.

Let $\ul{\Ab} = (\Ab, \lambda, \iota, \alpha)$ be the universal abelian scheme over $\MM_K$. Using \cite[Theorem 6.4.1.1]{Lan13}, $\Ab$ can be extended to a semiabelian scheme over $\MM_K^\tor$ that is part of a degenerating family and which we still denote $\Ab$. By \cite[Theorem 3.4.3.2]{Lan13}, there exists a dual semiabelian scheme $\Ab^\vee$ together with homomorphisms $\Ab \to \Ab^\vee$, $\OO_F \otimes \ZZ_{(p)} \to \End_{\MM_K^{\tor}} \Ab$ and a $K^{(p)}$-level structure on $\Ab$ that extend $\lambda$, $\iota$ and $\alpha$ respectively.

Define an $\OO_F \otimes \ZZ_{(p)}$-scheme $\overline{\MM}_{K_r}$ over $\MM_K^{\tor}$ whose $S$-points classify the $P^u_H(\ZZ_p)$-orbits of $\OO \otimes \ZZ_p$-injections $\phi : L^+ \otimes \mu_{p^r} \hookrightarrow \Ab^\vee[p^r]_{/S}$ of group schemes with image an isotropic subgroup scheme. Let $\MM_{K_r}$ denote its pullback over $\MM_K$. We have the commutative diagram 
\[
\begin{tikzcd}
    \MM_{K_r}  
        \arrow[r, hook] \arrow[d] &
    \ol{\MM}_{K_r} 
        \arrow[d] \\
    \MM_K
        \arrow[r, hook] &
    \MM^{\tor}_K
\end{tikzcd}
\]
where the vertical arrows are $\LL_r$-torsors, where $\LL_r$ denotes $L_P(\ZZ_p/p^r\ZZ_p) = L_H(\ZZ_p/p^r\ZZ_p)$.

After base change from $\OO_F \otimes \ZZ_{(p)}$ to $F$, a choice of basis of $\ZZ_p(1)$ induces a canonical identification between $\MM_{K_{r /F}}$ and the moduli space $(\MM_{I_rK^p})_{/F}$. Moreover, the normalization of $(\MM^\tor_{K})_{/F}$ in $(\MM_{K_{r}})_{/F}$ is $\overline{\MM}_{K_{r /F}}$. In other words, given any open compact subgroup $K^p \subset G(\AA_f^p)$, we may define $K_r = I_rK^p$ and there should be no confusion when working over $S_{\square}$, for $\square = \{p\}$ or $\emptyset$. We sometimes write $K_{P,r}$ instead of $K_r$ if we want to emphasize its dependence on $P$.

To define modular forms of level $K_r$, we only need to work with the components over $\level{K}{\Sh}$. More precisely, for any polyhedral cone decomposition $\Omega$, denote the scheme-theoretic closure of $\level{K}{\Sh}$ in $\MM_{K,\Omega}^{\tor}$ by $\level{K}{\Sh}^{\tor}_{\Omega}$. Again, we denote the tower $\{\level{K}{\Sh}^{\tor}_\Omega\}_\Omega$ by $\level{K}{\Sh}^{\tor}$ and describe our construction as if this tower was a single scheme. In particular, we have a canonical inclusion (of towers) $s_K : \level{K}{\Sh}^{\tor} \hookrightarrow \MM_K^{\tor}$ in the obvious sense. Its restriction to $\level{K}{\Sh}$ is the natural inclusion $\level{K}{\Sh} \hookrightarrow \MM_K$ described above, which we denote by $s_K$ again.

As discussed in \cite[Sections 3-4]{Lan12} and \cite[Section 2.4]{EHLS}, this is a smooth toroidal compactification of $\level{K}{\Sh}$. Furthermore, over $F$ (i.e. when $\square = \emptyset$), it is equal to the usual toroidal compactification of the canonical model of the Shimura variety associated to $(G, X_\PP)$.

Define $\level{K_r}{\Sh}$ (resp. $\level{K_r}{\ol{\Sh}}$) as the pullback of $\MM_{K_r}$ (resp. $\ol{\MM}_{K_r}$) via $s_K$, i.e. we have the commutative diagrams
\[
\begin{tikzcd}
    \level{K_r}{\Sh}
        \arrow[r, hook] \arrow[d] &
    \MM_{K_r} 
        \arrow[d] & &
    \level{K_r}{\ol{\Sh}} 
        \arrow[r, hook] \arrow[d] &
    \ol{\MM}_{K_r} 
        \arrow[d] \\
    \level{K}{\Sh}
        \arrow[r, hook] &
    \MM_K & &
    \level{K}{\Sh^\tor} 
        \arrow[r, hook] &
    \MM^\tor_K
\end{tikzcd}
\]

By abusing notation, we denote all four of the horizontal inclusions by $s_K$. All four vertical arrows are covers by $\LL_r$-torsors.


\subsubsection{Complex uniformization.}

We first recall the description of natural complex structure on $X = X_\PP$. Let $V_\CC = L \otimes \CC$ with its pure Hodge decomposition $V_\CC = V^{-1, 0} \oplus V^{0, -1}$ of weight $-1$, as in section \ref{Hodge structure}. Let $W = V/V^{0,-1}$, a space defined over the reflex field $F$ of $\PP$. 

Fix an $\Ss$-submodule $\Lambda_0$ of $W$ such that $\Lambda_0 \otimes_{\Ss} \CC = W$ and consider the $\Ss$-module $\Lambda_0^\vee = \hom_{\ZZ_{(p)}}(\Lambda_0, \ZZ_{(p)}(1))$. Define $\Lambda = \Lambda_0 \oplus \Lambda_0^\vee$ and
\begin{align*}
    \brkt{\cdot}{\cdot}_{can} : \Lambda \times \Lambda &\to \ZZ_{(p)}(1) \\
    \brkt{(f_1,x_1)}{(f_2,x_2)}_{can} &= f_2(x_1) - f_1(x_2)
\end{align*}
so that both $\Lambda_0$ and $\Lambda_0^\vee$ are isotropic submodules of $\Lambda$. One has $\brkt{bx}{y}_{can} = \brkt{x}{\ol{b}y}_{can}$, for $b \in \OO_F$.

The pair $(\Lambda, \brkt{\cdot}{\cdot}_{can})$ induces an $\Ss$-group scheme $G_0$ whose $R$-points are given by
\[
    G_0(R) = 
    \left\{ 
        (g, \nu) \in \GL_{R}(\Lambda \otimes_{\Ss} R) \times R^\times \mid \brkt{gx}{gy}_{can} = \nu \brkt{x}{y}_{can}, x, y \in \Lambda \otimes R
    \right\} \ ,
\]
for any $\Ss$-algebra $R$.

One readily checks that there is an isomorphism $V \cong \Lambda \otimes_{\Ss} \CC$ of $\CC$-vector spaces that identifies $V^{-1,0}$ (resp. $V^{0, -1}$) with $\Lambda_0 \otimes_{\Ss} \CC$ (resp. $\Lambda^\vee_0 \otimes_{\Ss} \CC$) and the pairing $\brkt{\cdot}{\cdot}$ with $\brkt{\cdot}{\cdot}_{can}$. In other words, it yields an identification between $G_{/\CC}$ and $G_{0/\CC}$.

Let $H_0 \subset G_0$ be the stabilizer of the polarization $\Lambda = \Lambda_0 \oplus \Lambda_0^\vee$. The algebraic representations of $H_0$ will describe the cohomological weights of the automorphic representations considered below. The natural projection
\[
    H_0 \to \Gm \times \GL_{\OO_F \otimes \Ss} (\Lambda_0)
\]
is an isomorphism.

Under the identification above, $H_0(\CC)$ corresponds to $C(\CC)$, where $C$ is the real algebraic subgroup of $G_{/\RR}$ whose real points $U_\infty = C(\RR)$ is the stabilizer of $h \in X$ under the conjugation action of $G(\RR)$. 

Let $P_0 \subset G_0$ be the parabolic subgroup defined as the stabilizer of $\Lambda_0$; its Levi factor is $H_0$. Then, the identification above embeds $G(\RR)/U_\infty \xrightarrow{\sim} X$ as an open subspace of $G_0(\CC)/P_0(\CC)$, which yields a complex structure on $X$. As discussed in \cite[Section 8]{Kot92}, the complex analytic space $\level{K}{\Sh}(\CC)$ is naturally isomorphic to
\[
    G(\QQ) \backslash X \times G(\AA_f) / K \ .
\]

Note that $P_{0 /\CC}$ corresponds to $P_{h /\CC}$, where $P_h \subset G_{/\RR}$ be  is the stabilizer of the Hodge filtration on $V = L \otimes \RR$ determined by $h$, as explained in Section \ref{Hodge structure}.

\subsection{Weight and $p$-type of automorphic vector bundles} \label{aut v bundle of w and t}

\subsubsection{The canonical bundles} \label{can bundles}
In this section, $\square$ can be either $\emptyset$ or $\{p\}$. In both cases, let $K = G(\ZZ_p)K^p$ and for any $r \geq 1$, let $K_r = I_rK^p$. When $\square = \emptyset$, some of the definitions below can be adapted for any level structure at $p$ but these will not be pertinent for our work.

Let $\w$ be the $\OO_{\MM_K^{\tor}}$-dual of $\Lie_{\MM_K^{\tor}} \Ab^\vee$ over $\Ss$. The Kottwitz determinant condition mentioned in the definition of the moduli problem $\MM_K(\PP)$ implies that $\w$ is locally isomorphic to $\Lambda_0^\vee \otimes_{\Ss} \OO_{\MM_K^\tor}$ over $\OO_{\KK} \otimes \OO_{\MM_K^\tor}$. Define
\[
    \EE = \isom_{\OO_{\KK} \otimes \OO_{\MM_K^\tor}}
    ( 
        (
            \w,
            \OO_{\MM_K^\tor}(1)
        ),
        (
            \Lambda_0^\vee \otimes_{S_\square} \OO_{\MM_K^\tor},
            \OO_{\MM_K^\tor}(1)
        )
    ) \ ,
\]
over $\MM_K^\tor$. The natural structure map is an $H_0$-torsor $\pi : \EE \to \MM_K^\tor$. Set $\EE_r = \EE \times_{\MM_K^\tor} \bar{\MM}_{K_r}$, an $\LL_r$-torsor of $\EE$, so
\[
\begin{tikzcd} 
    \EE_r 
        \arrow[r, "H_0"] \arrow[d, "\LL_r"] &
    \ol{\MM}_{K_r}
        \arrow[d, "\LL_r"] \\
    \EE
        \arrow[r, "H_0"] &
    \MM_K^\tor
\end{tikzcd}
\]
and denote the structure map $\EE_r \to \ol{\MM}_{K_r}$ by $\pi_r$. 

Let $\tau$ be a smooth finite-dimensional representation of $L_P(\ZZ_p)$ that factors through $\LL_r$. Let $M_\tau$ denote the associated complex vector space. In fact, there exists a finite ring extension $\Ss[\tau]$ of $\Ss$ on which $\tau$ is well defined. 

Define $\EE_{r, \tau}$ as the $\Ss[\tau]$-scheme over $\EE_r$ whose $R$-points are given by
\[
    \EE_{r, \tau}(R) = \EE_r(R) \times^{\tau} (M_\tau)_{/R} := (\EE_r(R) \times (M_\tau)_{/R})/{\sim^\tau}
\]
for any $\Ss[\tau]$-algebra $R$. The equivalence relation $\sim^\tau$ is given by
\[
    (\epsilon, m) \sim^\tau (g\epsilon, \tau(g)m) \ ,
\]
for all $\varepsilon \in \EE_r$, $m \in (M_\tau)_{/R}$ and $g \in L_H(\ZZ_p)$. Let $\pi_{r, \tau}$ be the structure map $\EE_{r, \tau} \to \ol{\MM}_{K_r}$.

\subsubsection{Weights of modular forms.} \label{rep of H0}
Let $\KK'$ be the Galois closure of $\KK$ and $\p' \subset \OO_{\KK'}$ be the prime above $p$ determined by $\i_p$. Moreover, let
\[
    \Ss^0 = \Ss \otimes_{\OO_{F, (p)}} \OO_{\KK', (\p')} = 
    \begin{cases}
        \KK' \ , & \text{if } \square = \emptyset \\
        \OO_{\KK', (\p')} \ , & \text{if } \square = \{p\} \\
    \end{cases}
\]

Over $\Ss^0$, we have an isomorphism
\begin{equation}\label{H0 over S0}
    H_{0 /\Ss^0} \xrightarrow{\sim} 
    \Gm \times 
    \prod_{\sigma \in \Sigma_\KK}
        \GL_{\OO \otimes_{\OO, \sigma} \Ss^0}(\Lambda_{0,\sigma}^\vee)
    \cong
    \Gm \times 
    \prod_{\sigma \in \Sigma_\KK}
        \GL_{b_{\sigma}}(\Ss^0) \ .
\end{equation}

Let $B_{H_0} \subset H_0$ be the Borel subgroup (defined over $\Ss^0$) that corresponds to the product of the lower-triangular Borel subgroups via the isomorphism \eqref{H0 over S0}. Let $T_{H_0} \subset B_{H_0}$ denote its maximal subtorus and let $B^u_{H_0}$ denote its unipotent radical subgroup.

Given an $\Ss^0$-algebra $R$, a character $\kappa$ of $T_{H_0}$ over $R$ is identified via the isomorphism \eqref{H0 over S0} with a tuple
\[
    \kappa = 
        (
            \kappa_0, 
            (
                \kappa_{\sigma}
            )_{
                \sigma \in \Sigma_\KK, 
            }
        ) \ ,
\]
where $\kappa_0 \in \ZZ$ and $\kappa_{\sigma} = (\kappa_{\sigma, j}) \in \ZZ^{b_{\sigma}}$. Namely, for 
\[
    t = 
        (
            t_0, 
            ( 
                \diag(
                    t_{\sigma, i, 1}, 
                    \ldots, 
                    t_{\sigma, i, b_{\sigma, i}}
                )
            )_{
                \sigma \in \Sigma_\KK
            }
        ) \in T_{H_0} \ ,
\]
one has
\[
    \kappa(t) = 
    t_0^{\kappa_0} 
    \prod_{\sigma \in \Sigma_\KK} 
    \prod_{j=1}^{b_{\sigma}}
        t_{\sigma, j}^{\kappa_{\sigma, j}}
\]

We say that $\kappa$ is \emph{dominant} if it is dominant with respect to the opposite Borel $B_{H_0}^\opp$ (of upper-triangular matrices). This is equivalent to $\kappa_{\sigma, j-1} \geq \kappa_{\sigma, j}$ for all $\sigma \in \Sigma_\KK$, $2 \leq j \leq b_{\sigma}$.

Given a dominant character $\kappa$ of $T_{H_0}$ over an $\Ss^0$-algebra $R$, extend it trivially to $B_{H_0}$. Define
\[
    W_\kappa = W_\kappa(R) = \Ind_{B_{H_0}}^{H_0} \kappa 
    = 
        \{
            \phi : H_{0_{/R}} \to \Ga \mid 
            \phi(bh) = \kappa(b)\phi(h), 
            \forall b \in B_{H_0} 
        \} \ .
\]
with its natural structure as a left $H_0$-module via multiplication on the right. Since $H_0$ is the Levi factor of $P_0$, we inflate it to an irreducible algebraic representation of $P_0$.

As explained in \cite[Part II. Chapter 2]{Jan03} and \cite[Section 8.1.2]{Hid04}, if $R$ is flat over $\Ss^0$, this is an $R$-model for the highest weight representation of $H_0$ with respect to $(T_{H_0}, B_{H_0}^\opp)$ of weight $\kappa$. 

Now, assume that $\square = \emptyset$ and hence, $R$ is a $\KK'$-algebra. Via the identification of $P_0$ and $P_h$ over $\CC$, $W_\kappa$ is a representation of $P_h$. As explained in \cite[Section 7.1]{Har86}, it therefore corresponds to an homogeneous $G$-vector bundle over $\check{X}$, the compact dual of $X$. The latter induces an automorphic vector bundle $\w_{W_\kappa}$ on $\level{K}{\Sh}$, for any $K$ as in Section \ref{can bundles}. As explained in \cite[Section 6.1.1]{EHLS}, it has a canonical model over some finite field extension $F(\kappa)$ of $F$ contained in $\KK'$. Its base change to $\KK'$ has a canonical extension to the toroidal compactification $\level{K}{\Sh^{\tor}_\Omega}$ of $\level{K}{\Sh}$, for any polyhedral cone decomposition $\Omega$.

Indeed, the restriction of
\[
    \w_\kappa = \w_{\kappa, \Omega} = s_{K, \Omega}^* \pi_* (\OO_{\EE}[\kappa]) \ ,
\]
where $s_{K, \Omega}$ is the canonical inclusion $\level{K}{\Sh^{\tor}_\Omega} \hookrightarrow \MM_{K,\Omega}^{\tor}$, to $\level{K}{\Sh}$ is canonically isomorphic to $\w_{W_\kappa}$. We denote both by $\w_\kappa$ when no confusion arises.

Furthermore, the subcanonical bundle of $\w_{W_\kappa}$ corresponds to the twist $\w_\kappa(-D_\Omega)$, where $D_\Omega$ is the ideal sheaf of the boundaries. In other words, it is the Cartier divisor $\level{K}{\Sh^{\tor}_\Omega} - \level{K}{\Sh}$ equipped with its structure of reduced closed subscheme.

The space of modular forms (for $G$) of weight $\kappa$ and level $K$ is
\[
    M_\kappa(K; R) 
    := 
        H^0(
            \level{K}{\Sh^\tor}_{/R},
            \w_\kappa
        )
    = \varinjlim_{\Omega} H^0(
            \level{K}{\Sh^\tor_\Omega}_{/R},
            \w_\kappa
        ) \ ,
\]
where the limit runs over all polyhedral cone decompostion $\Omega$, partially ordered via refinements. Similarly, the space of cusp forms $S_\kappa(K; R)$ is defined as
\[
    H^0(
            \level{K}{\Sh^\tor}_{/R},
            \w_\kappa^{\sub}
        )
    = \varinjlim_{\Omega} H^0(
            \level{K}{\Sh^\tor_\Omega}_{/R},
            \w_\kappa(-D_\Omega)
        ) \ .
\]

\subsubsection{$P$-nebentypus.} \label{P nebentypus}
In this chapter, we set $\square = \{p\}$, so let $S^0 := \Ss^0 = \OO_{\KK', (\p')}$. Fix an $S^0$-algebra $R \subset \CC$. Observe that the objects from the section above are all well-defined over $S^0_{\{p\}} = \OO_{\KK', (\p')}$ if we restrict our attention to level subgroups $K$ of the form $K = G(\ZZ_p) K^p$ or $K = K_r = I_rK^p$ for some $r \geq 1$.

As in section \ref{can bundles}, let $\tau$ be a smooth finite-dimensional representation of $L_P(\ZZ_p)$ that factors through $\LL_r = L_P(\ZZ_p/p^r\ZZ_p)$. Let $M_\tau$ denote the associated module over a finite ring extension of $\OO_F \otimes \ZZ_{(p)}$ contained in $\CC$. Enlarging the latter if necessary, we assume that it contains $S^0$ and denote it $S^0[\tau] \subset \CC$.

Define
\[
    \w_{\kappa, r, \tau} = 
        s_{K}^* (\pi_{r, \tau})_*
        (
            \OO_{\EE_{r, \tau}}[\kappa]
        )
\]
as a sheaf over $\level{K_r}{\ol{\Sh}}$. We denote its restriction to $\level{K_r}{\Sh}$ by $\w_{\kappa, r, \tau}$ as well.

\begin{definition}
    For any $S_0[\tau]$ algebra $R$, a modular form over $R$ on $G$ of weight $\kappa$, level $K_r$ and \emph{$P$-nebentypus} $\tau$ is a global section of $\w_{\kappa, r, \tau}$ over $\level{K_r}{\ol{\Sh}}$. The $R$-module of all such forms is denoted $M_{\kappa}(K_r, \tau; R)$.

    The $R$-module $S_{\kappa}(K_r, \tau; R)$ of cuspidal forms over $R$ on $G$ of weight $\kappa$, level $K_r$ and $P$-nebentypus $\tau$ is similarly defined by replacing $\w_{\kappa, r, \tau}$ with its twist by the ideal sheaf of the boundaries.
\end{definition}

A modular form $f \in M_{\kappa}(K_r, \tau; R)$ can be interpreted as a functorial rule that assigns to a tuple $(\ul{A}, \varepsilon, \phi) \in \EE_{r, \tau}(R')$, over an $R$-algebra $R'$, an element $f(\ul{A}, \epsilon, \phi) \in (M_\tau)_{/R'}$ such that
\[
    f(\ul{A}, b\epsilon, \phi \circ l) = 
        \kappa(b) 
        \tau(l) 
        f(\ul{A}, \epsilon, \phi)
\]
for all $b \in B_{H_0}(R')$ and $l \in L_P(\ZZ_p)$.

\begin{remark}
    Classically, the nebentypus of a modular form is a finite-order character of the maximal torus $T_H(\ZZ_p)$ of $H$. In our terminology, this is equivalent to a $B$-nebentypus.
\end{remark}

One similarly defines $\w_{\kappa, r}$ as the pullback to  $\level{K_r}{\ol{\Sh}}$ of $(\pi_r)_* \OO_{\EE_r}[\kappa]$ and $\w_{\kappa, r}^{\sub}$ as its twist by the ideal sheaf of the boundaries. Define
\[
    M_\kappa(K_r; R) 
    = 
        H^0(
            \level{K_r}{\ol{\Sh}}_{/R},
            \w_{r,\kappa}
        )
    \ \ \ \text{and} \ \ \
    S_\kappa(K_r; R) 
    = 
        H^0(
            \level{K_r}{\ol{\Sh}}_{/R},
            \w_{r,\kappa}^{\sub}
        ) \ .
\]

Since $L_P(\ZZ_p)$ is a compact group, one readily sees that
\[
    M_\kappa(K_r; R) =
        \bigoplus_{\tau}
            M_\kappa(K_r, \tau; R)
    \ \ \ \text{and} \ \ \
    S_\kappa(K_r; R) =
        \bigoplus_{\tau}
            S_\kappa(K_r, \tau; R)
\]
where the direct sum runs over all smooth irreducible representations over $R$ of $L_P(\ZZ_p)$ that factor through $\LL_r$.

\subsection{Weight types of (anti-)holomorphic automorphic representations.}
Let $G = G_1 = GU(V)$ be the unitary group (over $\ZZ$) associated to the PEL datum $\PP = \PP_1$. Recall that its signature is a collection of pairs of integers $\{(a_\sigma, b_\sigma)_{\sigma \in \Sigma_\KK}\}$.

Let $\square = \emptyset$ or $\{p\}$ and fix a neat open compact subgroup $K$ as in Section \ref{mod space and Shi var}. The dimension of $\level{K}{\Sh}(V)$ is equal to the $\CC$-dimension of $X_\PP$, namely
\[
    d = \sum_{\sigma \in \Sigma_\KK} a_\sigma b_\sigma \ .
\]

For any $i = 0, \ldots, d$, we write
\[
    H^i(\Sh(V), \w_\kappa) = 
    \varinjlim_K 
        H^i(
            \level{K}{\Sh^\tor}_{/R},
            \w_\kappa
        )
    \ \ \ \text{and} \ \ \
    H^i(\Sh(V), \w_\kappa^{\sub}) = 
    \varinjlim_K 
        H^i(
            \level{K}{\Sh^\tor}_{/R},
            \w_\kappa^{\sub}
        )
\]
and define
\[
    H^i_!(\Sh(V), \w_\kappa) = 
    \mathrm{Im}
        \left(
            H^i(\Sh(V), \w_\kappa^{\sub})
                \to
            H^i(\Sh(V), \w_\kappa)
        \right)
\]
as modules over $\Ss^0$.

\subsubsection{Comparison to $(\P_h, K_h)$-cohomology.} \label{comp cohomology theories}
In this section, we recall some of the results of \cite[Section 6.2]{EHLS} that are relevant for us later, especially in Section \ref{Pord mod forms}. 

We use the identification of $P_0$ (resp. $H_0$) and $P_h$ (resp. $C$) over $\CC$ without comments. Therefore, we identify modules equipped with actions from these groups (or their Lie algebra) repeatedly. Moreover, we write $K_h$ instead of $U_\infty$ for the real points of $C$.

Let $\g = \Lie(G(\RR))_\CC$. The adjoint action of $\mathrm{Ad}(h(\sqrt{-1}))$ induces the Harish-Chandra decomposition $\g = \p_h^- \oplus \k_h \oplus \p_h^+$. The Lie algebra of $P_h(\CC)$ is $\P_h = \p_h^- \oplus \k_h$. 

Therefore, for any dominant weight $\kappa$ of $T_{H_0}$, the highest weight representation $W_\kappa$ as a natural structure as a $(\P_h, K_h)$-module. Over $\CC$, there is a canonical isomorphism
\begin{equation} \label{H! vs H(b, K)}
    H^i_!(\Sh(V), \w_\kappa) 
        \xrightarrow{\sim} 
    H^i(
        \P_h, K_h;
        \Ab_0(G) \otimes W_\kappa
    ) 
    \ \ \ \ \ 
    (\text{for } i=0 \text{ and } d)
\end{equation}
of $G(\AA_f)$-modules, where $\Ab_0(G)$ is the space of cusp forms on $G$.

For any $\phi \in \Ab_0(G)$, let $\ol{\phi}(g) = \ol{\phi(g)}$. As explained in \cite[Section 6.2.1]{EHLS}, the map $\phi \mapsto \ol{\phi}$ induces a $c$-semilinear $G(\AA_f)$-equivariant isomorphism
\[
    c_B : 
        H_!^0(
            \P_h, K_h;
            \Ab_0(G) \otimes W_\kappa
        )
    \xrightarrow{\sim}
        H_!^d(
            \P_h, K_h;
            \Ab_0(G) \otimes W_{\kappa^D}
        ) \ .
\]

Here, $\kappa^D$ is again a dominant weight of $T_{H_0}$ (depending on $\kappa$ and the signature of $G$ at archimedean places) defined in \cite[Section 6.1.3]{EHLS} but whose exact formula is not relevant for us.

Let $\pi = \pi_\infty \otimes \pi_f$ be an irreducible $(\g, K_h) \times G(\AA_f)$-subrepresentation of $\Ab_0(G)$. From now on, we refer to such an object as a \emph{cuspidal automorphic representations} (without mentioning its irreducibility).
\begin{definition}
    Let $\pi$ and $\kappa$ be as above and $K$ be \emph{any} open compact subgroup of $G(\AA_f)$. We say that $\pi$ is \emph{holomorphic} of weight type $(\kappa, K)$ if
    \[
        \pi_f^K \neq 0
        \ \ \ \text{and} \ \ \
        H^0(
            \P_h, K_h;
            \pi_\infty \otimes W_\kappa
        )
    \]

    On the other hand, we say that $\pi$ is \emph{anti-holomorphic} of weight type $(\kappa, K)$ if
    \[
        \pi_f^K \neq 0
        \ \ \ \text{and} \ \ \
        H^d(
            \P_h, K_h;
            \pi_\infty \otimes W_{\kappa^D}
        ) \neq 0
    \]
\end{definition}

\begin{remark} \label{E pi rational}
    As explained in \cite{BHR94}, if $\pi$ is holomorphic or anti-holomorphic, then $\pi_f$ is defined over some number field $E(\pi)$. Enlarging it if necessary, we always assume it contains $\KK'$. 
\end{remark}

Let $\ol{\pi}$ be the image of $\pi$ via the $c$-semilinear map $\phi \mapsto \ol{\phi}$ on $\Ab_0(G)$. The isomorphism $c_B$ induces an involution $\pi \mapsto \ol{\pi}$ on the set of cuspidal automorphic representations of $G$. By definition, it interchanges holomorphic and anti-holomorphic representations but preserves weight type.

As explained in \cite[Section 6.5.3]{EHLS}, if $\pi$ has weight type $(\kappa, K)$, there is an isomorphism
\[
    \ol{\pi} \cong \pi^\vee \otimes ||\nu||^{a(\kappa)} =: \pi^\flat \ ,
\]
where $\nu$ is the similitude character on $G$ and
\[
    a(\kappa) = 2\kappa_0 + \sum_{\sigma \in \Sigma_{\KK}} \sum_{j=1}^{b_\sigma} \kappa_{\sigma, j} \ .
\]

In the next sections, we consider certain (anti-)holomorphic cuspidal automorphic representations $\pi$ of weight type $(\kappa, K)$ whose local factor at $p$ has a non-zero fixed $I_{P, r}$-vector for some $r \gg 0$. In that case, $\pi$ is of weight type $(\kappa, K_{P,r})$ for all $r \gg 0$. 

If the representation satisfies further conditions with respect to certain Hecke operators at $p$, we say that such $\pi$ is $P$-ordinary or $P$-anti-ordinary. We compare structures of $P$-ordinary and $P$-anti-ordinary representations using pairs of contragredient representations. Therefore, the involution $\pi \mapsto \pi^\flat$ is more convenient than $\pi \mapsto \ol{\pi}$ to analyze these dual notions.

\section{Structure theorem for $P$-ordinary representations.} \label{can Pord vectors}
In this section, we finally introduce the notion of ``$P$-ordinary'' holomorphic automorphic representations on $G = G_1$. The main results are Theorems \ref{holo canonical vector} and \ref{Pord structure thm}.

We obtain direct consequences for the dual notion of $P$-anti-ordinary vectors in the next section. Furthermore, all statements can be adapted for $G_2$, the opposite group of $G_1$ introduced in Section \ref{P1 P2 P3 P4}. We study the theory on $G_2$ more carefully in Section \ref{P ord and anti ord on G2}. 

\subsection{$P$-ordinary representations.} \label{Pord rep defn}
Given $w \in \Sigma_p$ and $1 \leq j \leq n$, let $t_{w,j} \in \GL_{n}(\OO_w)$ denote the diagonal matrix
\[
    t_{w,j} = 
    \begin{cases}
        \diag(p1_j, 1_{n - j}), &
        \text{if } j \leq a_{w} \\
        \diag(p1_{a_{w}}, 1_{n - j}, p1_{j - a_{w}}), &
        \text{if } j > a_{w}
    \end{cases}
\]

It corresponds to an element of $G(\QQ_p)$ under \eqref{prod G over Zp}, which we denote $t^+_{w,j}$ (namely, all its other components are equal to 1). Set
\[
    U_{w,j} = K_r t_{w,j}^+ K_r 
\]

We normalize these operators as follows. Fix an $S^0$-algebra $R \subset \CC$ as in Section \ref{P nebentypus}. Given a character 
$
    \kappa 
    = 
    (
        \kappa_0, 
        (\kappa_\sigma)_{\sigma \in \Sigma_\KK}
    )
$ of $T_{H_0}$ over $R$, let $\kappa_p$ be the character of $T_P(\ZZ_p) = T_H(\ZZ_p)$ such that
\[
    \kappa_p(t)
    =
    \prod_{w \mid p}
    \prod_{
        \substack{
            \sigma \in \Sigma_\KK \\ \p_\sigma = \p_w
        }
    }
    \prod_{j=1}^{a_w}
        \sigma(t_{w,j})^{\kappa_{\sigma c, j}} \ ,
\]
where $t = (\diag(t_{w,1}, \ldots, t_{w,a_w}))_{w \mid p}$ via \eqref{def H}. 

We also define the $T_{H_0}$-character
$
    \kappa_{\norm} 
    = 
    (
        \kappa_0, 
        (\kappa_{\norm, \sigma})_{\sigma \in \Sigma_\KK}
    )
$, where
\begin{equation}
    \kappa_{\norm, \sigma} 
    = 
    (
        \kappa_{\sigma, 1} - b_\sigma, 
        \ldots,
        \kappa_{\sigma, b_\sigma} - b_\sigma
    ) \ .
\end{equation}

Let $\kappa' = (\kappa_{\norm})_p$, viewed as a character of $T_P(\ZZ_p)$. Then, the $j$-th normalized Hecke operator at $p$ of weight $\kappa$ is defined as
\begin{equation} \label{norm u w j}
    u_{w,j} = u_{w,j,\kappa} 
    := 
        \absv{
            \kappa'(t_{w,j})
        }_p^{-1} 
        U_{w,j} \ .
\end{equation}

These operators can be interpreted as correspondences on the Igusa tower associated to $G$ (see \cite[Section 2.9.5]{EHLS}, \cite[Section 8.3.1]{Hid04} or \cite{SU02}) but this point of view will not be relevant for us in this article.

For $w \in \Sigma_p$, recall that we fixed partitions
\[
    \d_{w} = \left( n_{w, 1}, \ldots, n_{w, t_{w}} \right)
    \ \ \ \text{and} \ \ \
    \d_{\ol{w}} = \left( n_{\ol{w}, 1}, \ldots, n_{\ol{w}, t_{\ol{w}}} \right)
\]
of $a_{w}$ and $b_{w}$ in Section \ref{level at p}. Let $r_w = t_w + t_{\ol{w}}$ and consider
\[
    \widetilde{\d}_w = 
    \left( 
        \widetilde{\d}_{w,1}, 
        \ldots,
        \widetilde{\d}_{w, t_w};
        \widetilde{\d}_{w,t_w+1}, 
        \ldots,
        \widetilde{\d}_{w, r_w}
    \right) :=
    \left( 
        n_{w, 1}, 
        \ldots, 
        n_{w, t_w} ; 
        n_{\ol{w}, t_{\ol{w}}}, 
        \ldots, 
        n_{\ol{w}, 1} 
    \right) \ ,
\]
a partition of $n = a_w + b_w$. For $j=1, \ldots, r_w$, let $D_w(j)$ be the partial sum $\sum_{i=1}^j \widetilde{\d}_{w,i}$. 

Furthermore, set
\[
    u_{P, p} = u_{P, p,\kappa} :=
    \prod_{w \in \Sigma_p}
    \prod_{j = 1}^{r_w}
        u_{w,D_w(j),\kappa}
\]

\begin{definition}
    The \emph{$P$-ordinary projector of weight $\kappa$} as
    \[
        e_P = e_{P, \kappa} 
        := 
            \varinjlim \limits_{n} 
                u_{P, p, \kappa}^{n!}
    \]
\end{definition}

Let $\pi = \pi_\infty \otimes \pi_f$ be a holomorphic cuspidal automorphic representation of weight type $(\kappa, K_r)$ for some $r \geq 0$. The double coset operator $U_{w,D_w(j)}$ acts on $\pi_f^{K_r}$ via the action of $G(\AA_f)$ on $\pi_f$. In fact, writing
\[
    \pi_f = \pi_p \otimes \left( \bigotimes_{l \neq p} \pi_l \right) \ ,
\]
it acts as the double coset operator $U_{w, D_w(j), \kappa}^{\GL} := I_{P, r}t_{w,D_w(j)}I_{P,r}$ on $\pi_p^{I_r}$. It is well known that the generalized eigenvalues of $u_{w,j,\kappa}$ are $p$-adically integral. Therefore, the $P$-ordinary projector $e_P$ is well-defined as an operator on $\pi_f^{K_r}$ and $\pi_p^{I_r}$. 

\begin{definition}
    We say that $\pi$ is \emph{$P$-ordinary} (at $p$) of level $r \geq 0$ if its local factor $\pi_p$ contains a non-zero vector $\phi$ fixed by $I_r = I_{P,r}$ such that $e_P \phi = \phi$. The space $\pi_{p,r}^{\Pord} = e_P\pi_p^{I_{P,r}}$ is called the \emph{$P$-ordinary subspace} of $\pi_p$ (or of $\pi$) of level $r$. We say that its elements are the \emph{$P$-ordinary vectors} of $\pi_p$ of level $r$.
\end{definition}

If $\pi_p$ is $P$-ordinary of some level $r$, then it is $P$-ordinary of all levels $r \gg 0$. In particular, $\pi$ has weight type $(\kappa, K_r)$ for all $r \gg 0$.

\begin{remark}
    When $P = B$, a result of Hida (see \cite[Corollary 8.3]{Hid98} or \cite[Theorem 6.6.9]{EHLS}) implies that the space of $B$-ordinary vectors (or simply \emph{ordinary} vectors) is at most 1-dimensional and does not depend on $r$. This is no longer true for general parabolic subgroups $P$. However, Theorem \ref{Pord structure thm} yields an analogous result for $P$-ordinary subspaces.
\end{remark}

Clearly, $\phi \in \pi_p$ is $P$-ordinary if and only if $\phi \in \pi_p^{I_r}$, for all $r \gg 0$, such that $\phi$ is a simultaneous eigenvector for all operators $u_{w, D_w(j)}$ such that each eigenvalue is a $p$-adic unit.

Since $I_r$ is normal in $I_r^0$, the space $\pi_p^{I_r}$ is stable under the action of $I_r^0/I_r \cong \LL_r$. Let $\tau$ be an irreducible finite-dimensional smooth representation of $L_P(\ZZ_p)$ that factors through $\LL_r$. If a $P$-ordinary vector $\phi \in \pi_p^{I_r}$ lies in the $\tau$-isotypic component of $\pi_p^{I_r}$, we say that $\phi$ is  \emph{$(P, \tau)$-ordinary} or that it is \emph{$P$-ordinary of type $\tau$}. Let $\pi_{p,r}^{(P, \tau)}$ denote the subspace consisting of all $(P, \tau)$-ordinary vectors.

One readily sees that any $P$-ordinary vector is the finite sum of $(P, \tau)$-ordinary vectors for finitely many different representations $\tau$ as above. In particular,
\[
    \pi_{p,r}^{\Pord} = \bigoplus_{\tau} \pi_{p,r}^{(P, \tau)} \ ,
\]
as $\tau$ runs over all irreducible smooth representations of $L_P(\ZZ_p)$ that factor through $\LL_r$.

\begin{remark}
    In Definition \ref{def PIwahori}, one could replace $I_{P,r}$ with the collection of $g \in G(\ZZ_p)$ such that $g \mod p^r$ is in $(\ZZ_p/p^r\ZZ_p)^\times \times SP(\ZZ_p/p^r\ZZ_p)$. Here, $SP$ is the derived subgroup of $P$ or equivalently, it is the product in $P$ over $w \in \Sigma_p$ of the subgroups $SP_w \subset P_w$ consisting of upper-block triangular matrices whose diagonal blocks all have determinant 1. Let us write the corresponding group by $I_{SP, r}$ momentarily, in which case we have $I_{P, r} \subset I_{SP, r} \subset I_{P,r}^0$.
    
    Then, one can define $P$-ordinary representations of $G$ using $I_{SP, r}$ instead of $I_{P,r}$. By doing so, the space of $P$-ordinary vectors decomposes a direct sum over all $P$-nebentypus of $\tau$ that factor through $\det : L_P(\ZZ_p) \to \ZZ_p^\times$. Doing so is obviously less general but has the advantage of simplify the theory as only characters of $L_P(\ZZ_p)$ occur as types of $P$-ordinary vectors. On the other hand, systematically developing the more general theory (with $P^u$ instead of $SP$) has the advantage that any holomorphic cuspidal representation $\pi$ of $G$ is trivially $\GL(n)$-ordinary. We discussed our motivation to study this more general notion in the introduction of this paper.
\end{remark}

\subsection{Local factors at places $w \mid p$.}
The identifications \eqref{prod G over Zp} and \eqref{GL(ei Lw) basis} induce the isomorphism
\begin{equation} \label{facto G(Qp)}
    G(\QQ_p) \xrightarrow{\sim} \QQ_p^\times \times \prod_{w \in \Sigma_p} G_w \ ,
\end{equation}
where $G_w = \GL_n(\KK_w)$. 

Consider the groups $I_{w,r}, I_{w,r}^0, P_w \subset G_w$  constructed in Section \ref{level at p}. Recall that the decompositions \eqref{prod G over Zp} and \eqref{GL(ei Lw) basis} yield identifications
\begin{align*}
    P \xrightarrow{\sim} 
        \prod_{w \in \Sigma_p} P_w 
    \ \ \ ; \ \ \
    I^0_r \xrightarrow{\sim} 
        \ZZ_p^\times 
        \times \prod_{w \in \Sigma_p} I_{w, r}^0 
    \ \ \ ; \ \ \
    I_r \xrightarrow{\sim} 
        \ZZ_p^\times \times 
        \prod_{w \in \Sigma_p} I_{w, r}
\end{align*}

Let $\pi$ be a holomorphic cuspidal automorphic representation of $G(\AA)$ of type $(\kappa, K_r)$. Recall that the character $\kappa$ of $T_{H_0}$ is identifies with a tuple $(\kappa_0, (\kappa_\sigma)_{\sigma \in \Sigma_\KK})$ such that $\kappa_0 \in \ZZ$ and $\kappa_\sigma \in \ZZ^{b_\sigma}$. 

The above discussion allows one to factor the $p$-component $\pi_p$ of $\pi$ as
\begin{equation} \label{facto pi p}
    \pi_p \cong \mu_p \otimes \bigotimes_{w \in \Sigma_p} \pi_w \ ,
\end{equation}
where $\mu_p$ is a character of $\QQ_p^\times$ and $\pi_w$ is an irreducible admissible representation of $G_w$.

Let $u_{w, D_w(j), \kappa}^{\GL} := |\kappa'(t_{w,j})|^{-1}_p U_{w, D_w(j), \kappa}^{\GL}$, where $\kappa'$ related to $\kappa$ as in equation \eqref{norm u w j}. Then, the Hecke operators $u_{w, D_w(j), \kappa}$ from Section \ref{Pord rep defn} act on
\[
    \pi_p^{I_r} \cong (\mu_p)^{\ZZ_p^\times} \otimes \bigotimes_{w \in \Sigma_p} \pi_w^{I_{w,r}} \ .
\]
via the action of $u_{w, D_w(j), \kappa}^{\GL}$ on $\pi_w^{I_{w,r}}$. Again, this action is compatible as $r$ increases, hence we do not include it in the notation of the operator and the generalized eigenvalues of $u^{\GL}_{w, D_w(j), \kappa}$ are all $p$-adically integral.

For the remainder of Section \ref{can Pord vectors}, we assume that $\pi$ is $P$-ordinary and that
\begin{equation} \label{ineq kappa sigma}
    \kappa_{\sigma, b_\sigma} + \kappa_{\sigma c, a_\sigma} \geq n, \forall \sigma \in \Sigma_\KK \ .
\end{equation}

The fact that $\pi$ is $P$-ordinary is equivalent to $\mu_p$ being unramified and that, for each $w \in \Sigma_p$ and $r \gg 0$, there exists some non-zero $\phi \in \pi_w^{I_{w,r}}$ such that 
\[
    u_{w, D_w(j), \kappa}^{\GL} \phi 
    = 
        c_{w, D_w(j)} \phi \ ,
\]
where is $c_{w, D_w(j)}$ a $p$-adic unit, for all $1 \leq j \leq r_w$. 

In that case, we say that $\pi_w$ is \emph{$P_w$-ordinary} and that such a vector $\phi$ is $P_w$-ordinary (of level $r$). We denote the subspace of all $P_w$-ordinary vectors as $\pi_w^{\Pword}$. 
Note that $\phi \in \pi_w^{I_r}$ is $P_w$-ordinary if and only if $e_w \phi = \phi$, where $e_w$ is the \emph{$P_w$-ordinary projector}
\[
    e_w = 
        \lim_{n \to \infty} 
        \left(
            \prod_{j=1}^{r_w} 
            u_{w, D_w(j)}^{n!}
        \right) \ ,
\]
which has a well-defined action on $\pi_w^{I_{w,r}}$.

\subsubsection{Explicit computations.}
To clarify arguments in later proofs, we now describe explicit left coset representatives for $U_{w,D_w(j)}^{\GL}$. For simplicity, we only compute the left coset representatives when $j \leq t_w$. The same conclusion applies for $j > t_w$ but writing down the matrices is simply more cumbersome. In any case, fix $j \leq t_w$ and write $i = D_w(j)$ (making the dependence on $j$ implicit). 

Fix a uniformizer $\varpi \in \p_w$. Given any matrix $X \in I_{w,r}$, write it as
\[
	X = \begin{pmatrix}
		A & B  \\
		\varpi^r C & D
	\end{pmatrix}
\]
where $A \in \GL_{i}(\OO_w)$, $D \in \GL_{i}(\OO_w)$ and $B \in M_{i \times (n-i)}(\OO_w)$ and $C \in M_{(n-i) \times i}(\OO_w)$.

Fix a set $S_w$ of representatives in $\OO_w$ for $\OO_w/p\OO_w$. Let $B', B'' \in M_{i \times (n-i)}(\OO_w)$ be the unique matrices such that $B'$ has entries in $S_w$ and $BD^{-1} = B' + p B''$. Then, we have
\[
	X = 
	\begin{pmatrix}
		1_j & B' \\
		0 & 1_{n-j}
	\end{pmatrix}
	\begin{pmatrix}
		A - \varpi^r B'C & p B''D \\
		\varpi^r C' & D
	\end{pmatrix} =: X' X''
\] 

In particular, $t_{w,i}^{-1}X''t_{w,i}$ is in $I_{w,r}$. Therefore,
\[
	I_{w,r} t_{w,i} I_{w,r} = \bigsqcup_{x \in M_j} xt_{w,i} I_{w,r}
\]
where $M_j \subset \GL_n(\KK_w)$ is the subset of matrices $\begin{pmatrix} 1_{i} & B \\ 0 & 1_{n-i}\end{pmatrix}$ such that the entries of $B$ are in $S_w$.

In particular, this set of representative does not depend on $r$ and one obtains the same result by replacing $I_{w,r}$ with $N_w = \cap_r I_{w,r} = P_w^u(\KK_w) \cap \GL_n(\OO_w)$. One readily convinces themselves that the calculations above still apply for $t_w < j \leq r_w$.

Let $V_w$ be the $\KK_w$-vector space associated to $\pi_w$. By continuity, its $N_w$-invariant subspace $V_w^{N_w}$ is equal to $\cup_r V_w^{I_{w,r}}$. 

\begin{lemma} \label{lma U_j}
	There is a decomposition $V_w^{N_w} = V^{N_w}_{w, \inv} \oplus V^{N_w}_{w, \nil}$ such that, for $1 \leq j \leq r_w$, $U_{w, D_w(j)}^{\GL}$ is invertible on $V^{N_w}_{w, \inv}$ and nilpotent on $V^{N_w}_{w, \nil}$. Moreover, $U_{w, D_w(j)}^{\GL} = I_{w,r} t_{w, D_w(j)} I_{w,r}$ acts as $\delta_{P_w}(t_{D_w(j)})^{-1} t_{D_w(j)}$ on $V_{w, \inv}^{N_w}$.
\end{lemma}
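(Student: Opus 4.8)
The plan is to reduce the statement to the classical structure of Jacquet functors together with Casselman's ``canonical lift'' computation, using the explicit left coset decomposition $I_{w,r}t_{w,D_w(j)}I_{w,r}=\bigsqcup_{x\in M_j}xt_{w,D_w(j)}I_{w,r}$ recorded just above (and its analogue for $j>t_w$).

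\emph{Step 1 (a single Fitting decomposition).} Since $\pi_w$ is admissible, each $V_w^{I_{w,r}}$ is finite dimensional and $V_w^{N_w}=\varinjlim_r V_w^{I_{w,r}}$. Every operator $U_{w,D_w(j)}^{\GL}=[I_{w,r}\,t_{w,D_w(j)}\,I_{w,r}]$ preserves $V_w^{I_{w,r}}$; because the $t_{w,D_w(j)}$ are diagonal and conjugate $N_w$ into itself, these Hecke operators commute with one another, and their product equals $[I_{w,r}\,t\,I_{w,r}]=:U$ with $t=\prod_{j=1}^{r_w}t_{w,D_w(j)}$ strictly contracting $N_w$ (the usual multiplicativity of $U_p$-type operators on the contracting monoid). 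I would then apply the Fitting lemma to the endomorphism $U$ of each finite-dimensional $V_w^{I_{w,r}}$ and use commutativity to get a decomposition, stable under all the $U_{w,D_w(j)}^{\GL}$, on which $U$ is bijective, resp.\ topologically nilpotent; these are compatible as $r$ increases by uniqueness of the Fitting decomposition, and passing to the limit defines $V^{N_w}_{w,\inv}$ and $V^{N_w}_{w,\nil}$.

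\emph{Step 2 (the ``moreover'': Casselman's canonical lift).} The crux is the last sentence. By Casselman's theory (\cite[\S4]{Cas95}; cf.\ \cite[\S6.6]{EHLS}), the natural projection $V_w^{N_w}\to (\pi_w)_{P_w}$ onto the $P_w$-Jacquet module (a finite-length $L_w$-module by Jacquet's theorem) restricts to an isomorphism on $V^{N_w}_{w,\inv}$, whose vectors are therefore fixed by the increasing union $\bigcup_m t^{-m}N_w t^{m}=P_w^u(\KK_w)$. Granting this, for $v\in V^{N_w}_{w,\inv}$ the element $\pi_w(t_{w,D_w(j)})v$ is already $N_w$-fixed (being fixed by $t_{w,D_w(j)}^{-1}N_w t_{w,D_w(j)}\subseteq P_w^u(\KK_w)$), so the averaging over $x\in M_j$ collapses and
\[
	U_{w,D_w(j)}^{\GL}v=\#M_j\cdot\pi_w(t_{w,D_w(j)})v=\delta_{P_w}(t_{w,D_w(j)})^{-1}\,\pi_w(t_{w,D_w(j)})v,
\]
the index $\#M_j=[N_w:t_{w,D_w(j)}N_w t_{w,D_w(j)}^{-1}]$ being exactly $\delta_{P_w}(t_{w,D_w(j)})^{-1}$. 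This is the asserted formula.

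\emph{Step 3 (invertibility, nilpotency, and the main obstacle).} Invertibility on $V^{N_w}_{w,\inv}$ is immediate from Step 2: there $\pi_w(t_{w,D_w(j)})=\delta_{P_w}(t_{w,D_w(j)})\,U_{w,D_w(j)}^{\GL}$, so $V^{N_w}_{w,\inv}$ is stable under each $\pi_w(t_{w,D_w(j)})$; since $\prod_j\pi_w(t_{w,D_w(j)})=\pi_w(t)=\delta_{P_w}(t)\,U$ is bijective on $V^{N_w}_{w,\inv}$ and the $\pi_w(t_{w,D_w(j)})$ commute and are injective (group elements), each of them—hence each $U_{w,D_w(j)}^{\GL}$—is bijective there. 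For topological nilpotency on $V^{N_w}_{w,\nil}$ I would apply Casselman's theory to the maximal parabolic $Q_j\supseteq P_w$ with breakpoint $D_w(j)$, in whose Levi $t_{w,D_w(j)}$ is central and contracting: the finite-slope part of $U_{w,D_w(j)}^{\GL}$ on $V_w^{N_w}$ is the canonical lift of $(\pi_w)_{Q_j}$, and $U_{w,D_w(j)}^{\GL}$ is locally nilpotent on the complementary summand $V_w^{N_w}\cap V(Q_j^u)$. The hard part is then the \emph{coordination}: showing that $V^{N_w}_{w,\nil}=V_w^{N_w}\cap V(P_w^u)$ lies in every $V_w^{N_w}\cap V(Q_j^u)$—equivalently that all these per-$j$ finite-slope subspaces coincide with the single $V^{N_w}_{w,\inv}$—using that the root subgroups of $P_w^u$ are distributed among the $Q_j^u$, that $t=\prod_j t_{w,D_w(j)}$, the agreement of $\delta_{P_w}$ and $\delta_{Q_j}$ on the $t_{w,D_w(j)}$, and $v_p\!\left(\omega_{\pi_w}(p)\right)>0$ for the degenerate index $j=r_w$ (where $t_{w,n}=p\cdot 1_n$) coming from the cohomological weight $\kappa$. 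Care is also needed throughout because $P_w$ is not a standard parabolic: it is block-upper-triangular in the $A$-block and block-lower-triangular in the $D$-block, which is exactly why the coset computations for $j>t_w$ are more cumbersome.
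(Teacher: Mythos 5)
Your overall strategy — existence of the decomposition from Hida/Casselman, the identification $V^{N_w}_{w,\inv}\cong V_{P_w}$, and the explicit coset computation giving $|M_j|=\delta_{P_w}(t_{D_w(j)})^{-1}$ — is the same as the paper's, which cites \cite[Section 5.2]{Hid98} and \cite[Proposition 5.1]{Hid98} for the first two and performs the same coset count. Your Step 2 computation is correct and equivalent to the paper's computation on $V_{P_w}$ (you work on the canonical lift; the paper works on the Jacquet module, and the two are identified).

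However, there is a genuine gap in how you \emph{construct} the decomposition. Step 1 defines $V^{N_w}_{w,\inv}\oplus V^{N_w}_{w,\nil}$ as the Fitting decomposition of the single product operator $U=\prod_j U^{\GL}_{w,D_w(j)}$. Commutativity does make this decomposition stable under each $U^{\GL}_{w,D_w(j)}$, but it does \emph{not} imply that each individual $U^{\GL}_{w,D_w(j)}$ is nilpotent on the nil part: for commuting operators $U_1=\diag(0,1)$, $U_2=\diag(1,0)$ on $\CC^2$, the product $U_1U_2$ is nilpotent but neither factor is. So with your definition of $V^{N_w}_{w,\nil}$, the nilpotency claim of the lemma is not established, and your Step 2 assertion that this Fitting piece maps isomorphically to $(\pi_w)_{P_w}$ is exactly what still needs to be proved. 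The resolution (Hida's Prop.\ 5.1, following Casselman) is to define $V^{N_w}_{w,\nil}:=V_w^{N_w}\cap V(P_w^u)$ and $V^{N_w}_{w,\inv}$ as the image of Casselman's stable splitting of $V\twoheadrightarrow V_{P_w}$, then prove directly that \emph{each} $U^{\GL}_{w,D_w(j)}$ is invertible on the first (via your formula) and locally nilpotent on the second; this automatically coincides with the Fitting decomposition of $U$ by uniqueness. Relatedly, the inclusion you want in Step 3 runs the wrong way: since $Q_j^u\subseteq P_w^u$, one has $V(Q_j^u)\subseteq V(P_w^u)$, so nilpotency of $U^{\GL}_{w,D_w(j)}$ on $V_w^{N_w}\cap V(Q_j^u)$ does not by itself give nilpotency on the (possibly larger) $V_w^{N_w}\cap V(P_w^u)$. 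The per-parabolic $Q_j$ detour is unnecessary once one works with $V(P_w^u)$ directly as in Hid98.
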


\begin{proof}
    We keep writing $i = D_w(j)$ in this proof and omit the subscript $w$ in what follows. 
    
    The first part is a consequence of the explanations in \cite[Section 5.2]{Hid98}. Moreover, \cite[Proposition 5.1]{Hid98} shows that the natural projection from $V$ to its $P$-Jacquet module $V_P$ induces an isomorphism $V_{\inv}^N \cong V_P$ that is equivariant for the action of all the $U^{\GL}_i$ operators.
    
    From our explicit computations above, it is clear that $U^{\GL}_i$ acts on $V_P$ via $|M_j|t_{i}$, where $|M_j|$ is the cardinality of $M_j$. To see this, simply note that given any $x \in M_j$, $t_{i}^{-1} x t_{i} \in P_w^u(\KK_w)$ fixes $V_P$. Therefore, the result follows since $M_j$ contains exactly $|p|_w^{-i(n-i)} = \delta_P(t_i)^{-1}$ elements.
\end{proof}

It is clear from Lemma \ref{lma U_j} that any $P_w$-ordinary vector $\phi \in V_w^{N_w}$ lies in $V^{N_w}_{w,\inv}$ and 
\begin{equation} \label{s_j eigenvalue}
    \pi_w(t_{w, D_w(j)})(\phi) = 
        |\kappa'(t_{w, D_w(j)})|_p 
        \delta_{P_w}(t_{w, D_w(j)}) 
        c_{w, D_w(j)} \phi \ ,
\end{equation}
where $c_{w, D_w(j)}$ is its $u^{\GL}_{w,D_w(j)}$-eigenvalue (a $p$-adic unit). In particular, $\phi$ is a simultaneous eigenvector under the action of $\pi_w$ for all matrices $t_{w,D_w(j)}$.

\subsubsection{Bernstein-Zelevinsky geometric lemma for $P_w$-ordinary representations.} 
In Section \ref{BK types of Pord rep}, we obtain results about the structure of the $P_w$-ordinary subspace of $\pi_w$ via its relation to its $P_w$-Jacquet module, see the proof of Lemma \ref{lma U_j}. To understand further the $P_w$-Jacquet module of $\pi_w$, we use a version of the Bernstein-Zelevinsky geometric lemma (see \cite[Section VI.5.1]{Ren10} or \cite[Theorem 6.3.5]{Cas95}) that is adapted to our setting, see Lemma \ref{P-Jacquet filtration}. However, we first need to introduce some notation.

\begin{lemma}\label{Jacquet Lemma}
    Let $\pi_w$ be a $P_w$-ordinary representation of $G_w$. There exists a parabolic subgroup $Q_w \subset P_w$ of $G_w$ and a supercuspidal representation $\sigma_w$ of $Q$ such that $\pi_w \subset \ind{Q_w}{G_w} \sigma_w$.
\end{lemma}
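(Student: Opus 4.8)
The statement refines the supercuspidal support of $\pi_w$ (Section \ref{sc support}): by Jacquet's theorem we already know $\pi_w \subset \ind{Q_w}{G_w}\sigma_w$ for \emph{some} parabolic $Q_w$ with supercuspidal $\sigma_w$ on its Levi; the content here is that one may choose $Q_w \subseteq P_w$. The plan is to start from the existence (from the discussion around Lemma \ref{lma U_j}) of a non-zero $P_w$-ordinary vector $\phi$, which by Lemma \ref{lma U_j} lies in $V_{w,\inv}^{N_w}$, and then exploit the isomorphism $V_{w,\inv}^{N_w} \cong V_{P_w}$ with the $P_w$-Jacquet module, equivariant for the $U^{\GL}_{w,D_w(j)}$-operators. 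Thus $\phi$ descends to a non-zero vector in the (unnormalized) Jacquet module $(\pi_w)_{P_w}$, so $(\pi_w)_{P_w} \neq 0$, equivalently $\r{P_w}{G_w}\pi_w \neq 0$. By Frobenius reciprocity $\hom_{G_w}(\pi_w, \ind{P_w}{G_w}(\r{P_w}{G_w}\pi_w)) = \hom_{L_{P_w}}(\r{P_w}{G_w}\pi_w, \r{P_w}{G_w}\pi_w) \neq 0$, so $\pi_w$ embeds into a parabolic induction from $P_w$ of the (normalized) Jacquet module $\rho := \r{P_w}{G_w}\pi_w$, a representation of the Levi $L_{P_w}$.

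The next step is to control the supercuspidal support of $\rho$. Since $\pi_w$ is irreducible and admissible, $\rho$ is a finitely generated admissible representation of $L_{P_w}$, hence has an irreducible subquotient, but we want an irreducible \emph{subrepresentation} to embed $\pi_w$ cleanly; instead I would argue at the level of supercuspidal support: any irreducible subquotient $\rho_0$ of $\rho$ has a supercuspidal support $[M_w, \sigma_w]$, where $M_w$ is a Levi of some parabolic $Q_w^0$ of $L_{P_w}$ (by Jacquet's theorem applied inside $L_{P_w}$), and we set $Q_w$ to be the parabolic of $G_w$ with $Q_w \subseteq P_w$ and Levi $M_w$ — this uses the standard fact that parabolics of $G_w$ contained in $P_w$ correspond to parabolics of $L_{P_w}$. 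Since $\pi_w$ is irreducible and embeds in something built out of $\rho$ by parabolic induction from $P_w$, the transitivity of parabolic induction ($\ind{P_w}{G_w}\circ \ind{Q_w^0}{L_{P_w}} = \ind{Q_w}{G_w}$) together with exactness/second-adjunction arguments shows that the supercuspidal support of $\pi_w$ is $[M_w,\sigma_w]_{G_w}$, witnessed by the inclusion $\pi_w \hookrightarrow \ind{Q_w}{G_w}\sigma_w$ (after possibly conjugating so that $Q_w$ is standard-inside-$P_w$ and the embedding is honest, using that $\pi_w$ is a subrepresentation of some induced representation with the right support — one can pass between "appears as a subquotient of $\ind{Q_w}{G_w}\sigma_w$" and "appears as a subrepresentation" by a geometric-lemma / order-of-induction argument, or simply by replacing $\sigma_w$ by a twist by an unramified character, which does not change inertial support).

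The main obstacle is the last sentence: upgrading "$\pi_w$ is a subquotient of $\ind{Q_w}{G_w}\sigma_w$" to "$\pi_w$ is a \emph{subrepresentation}" while keeping $Q_w \subseteq P_w$. The safe route is to take $\rho_0 \subset \rho$ to be an irreducible \emph{sub}representation of the Jacquet module — it exists because $\rho$, being a Jacquet module of an admissible representation, is admissible and nonzero, hence (as $L_{P_w}$ is a reductive $p$-adic group) has an irreducible subrepresentation if one first passes to the smooth dual and back, or more directly because the socle of a nonzero finitely-generated admissible representation is nonzero. Then $\rho_0 \hookrightarrow \rho$ gives, via Frobenius reciprocity applied to $\hom_{L_{P_w}}(\rho_0, \r{P_w}{G_w}\pi_w)$... wait, the adjunction goes the other way; instead use second adjunction (Bernstein): $\hom_{G_w}(\ind{P_w}{G_w}\rho_0, \pi_w) = \hom_{L_{P_w}}(\rho_0, \r{\bar P_w}{G_w}\pi_w)$, or just observe $\hom_{G_w}(\pi_w, \ind{P_w}{G_w}\rho) \supseteq$ the map coming from $\rho_0 \hookrightarrow \rho$ composed with the canonical adjunction unit $\pi_w \to \ind{P_w}{G_w}\r{P_w}{G_w}\pi_w$, and since $\pi_w$ is irreducible this unit is injective (its kernel would be a proper subrep, but the unit is nonzero as $\rho\neq 0$). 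Then embed $\rho_0$ further into $\ind{Q_w^0}{L_{P_w}}\sigma_w$ by the same reasoning inside $L_{P_w}$, and conclude $\pi_w \hookrightarrow \ind{P_w}{G_w}\ind{Q_w^0}{L_{P_w}}\sigma_w = \ind{Q_w}{G_w}\sigma_w$ with $Q_w \subseteq P_w$, as desired. The only real work is checking exactness of these functors and the compatibility of the adjunction units, all of which are standard (Casselman, \cite{Cas95}, Sections 3 and 6; Bernstein's second adjointness, \cite{Ren10}, Section VI).
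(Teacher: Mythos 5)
Your overall strategy is the same as the paper's --- show $\r{P_w}{G_w}\pi_w \neq 0$, choose an irreducible constituent of this Jacquet module, embed $\pi_w$ by Frobenius reciprocity, then apply Jacquet's theorem inside the Levi and transitivity of induction --- but there is a genuine gap in how you handle the Jacquet module, and your own hesitations signal it. You set out to take an irreducible \emph{sub}representation $\rho_0 \hookrightarrow \rho := \r{P_w}{G_w}\pi_w$ and then try to produce an embedding $\pi_w \hookrightarrow \ind{P_w}{G_w}\rho_0$ by Frobenius reciprocity, but that adjunction reads $\hom_{G_w}(\pi_w, \ind{P_w}{G_w}\rho_0) = \hom_{L_{P_w}}(\rho,\rho_0)$, and an inclusion $\rho_0 \hookrightarrow \rho$ gives you nothing in $\hom_{L_{P_w}}(\rho,\rho_0)$; in general this Hom space can vanish. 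You notice this ("wait, the adjunction goes the other way") and then try to salvage it by composing $\rho_0 \hookrightarrow \rho$ with the unit $\pi_w \to \ind{P_w}{G_w}\rho$, but those maps do not compose to a map into $\ind{P_w}{G_w}\rho_0$ --- the induced inclusion $\ind{P_w}{G_w}\rho_0 \hookrightarrow \ind{P_w}{G_w}\rho$ goes the wrong way. Your fallback to second adjointness also does not help, since it brings in the \emph{opposite} Jacquet functor $\r{\bar P_w}{G_w}$, which would not keep $Q_w$ inside $P_w$.

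The fix is simply to take an irreducible \emph{quotient} of the Jacquet module, not a subrepresentation, which is what the paper does. Because $\rho = \r{P_w}{G_w}\pi_w$ is finitely generated and admissible (Casselman, Theorem 3.3.1), it has an irreducible admissible quotient $\tau$, i.e.\ a surjection $\rho \twoheadrightarrow \tau$. This gives a nonzero element of $\hom_{L_{P_w}}(\rho,\tau)$, hence of $\hom_{G_w}(\pi_w, \ind{P_w}{G_w}\tau)$, and irreducibility of $\pi_w$ makes this an embedding $\pi_w \hookrightarrow \ind{P_w}{G_w}\tau$. Then Jacquet's theorem applied to $\tau$ on $L_{P_w}$ gives $\tau \hookrightarrow \ind{Q_L}{L_{P_w}}\sigma_w$ with $\sigma_w$ supercuspidal, and transitivity yields $\pi_w \hookrightarrow \ind{Q_w}{G_w}\sigma_w$ with $Q_w \subset P_w$. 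Your first paragraph's "argue at the level of supercuspidal support, possibly twisting by an unramified character" sidesteps the subrepresentation-vs-subquotient issue only informally; the quotient route makes it unnecessary. Everything else in your outline (the reduction from $P_w$-ordinarity to $V_{w,\inv}^{N_w}\cong V_{P_w}$ being nonzero, the identification of parabolics of $G_w$ inside $P_w$ with parabolics of $L_{P_w}$, transitivity of induction) is sound and matches the paper.
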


\begin{proof}
    The following is a minor modification of the proof of Jacquet's theorem \cite[Theorem 5.1.2]{Cas95}. Moreover, we omit the subscript $w$ to lighten the notation.
	
    The fact that $\pi$ is $P$-ordinary implies that $\r{P}{G} \pi \neq 0$. By \cite[Theorem 3.3.1]{Cas95}, the latter is both admissible and finitely generated so it admits an irreducible admissible quotient $\tau$ as a representation of $L$. 
	
    By Frobenius reciprocity \cite[Theorem 2.4.1]{Cas95} and the irreducibility of $\pi$, it follows that $\pi \subset \ind{P}{G} \tau$. Then, it is a theorem of Jacquet \cite[Theorem 5.1.2]{Cas95} that there exists a parabolic $Q_L \subset L$ and a supercuspidal representation $\sigma$ of its Levi factor such that $\tau \subset \ind{Q_L}{L} \sigma$. By transitivity of parabolic induction, the result follows.
\end{proof}

Fix an embedding $\pi_w \hookrightarrow \ind{Q_w}{G_w} \sigma_w$ with the notation as in Lemma \ref{Jacquet Lemma}. Let $M_w$ and $Q_w^u$ denote the Levi factor and unipotent radical of $Q_w$. 

Moreover, let $B_w$ denote the Borel subgroup of $G_w$ corresponding to the trivial partitions, as in Remark \ref{trivial partition}. Let $T_w$ denote the Levi factor of $B_w$. In particular, $T_w$ is the maximal torus of $G_w$.

Let $W$ be the Weil group of $G_w$ with respect to $(B_w, T_w)$ and consider
\[
    W(P_w, Q_w) = 
        \{x \in W \mid 
            x^{-1}(L_w \cap B_w)x
                \subset B_w, 
            x(M_w \cap B_w)x^{-1}  
                \subset B_w
        \} \ .
\]

According to \cite[Section V.4.7]{Ren10}, for each $x \in W(P_w, Q_w)$,  $xP_wx^{-1} \cap M_w$ is a parabolic subgroup of $M_w$ with Levi factor equal to $xL_wx^{-1} \cap M_w$. Similarly, the Levi factor of the parabolic subgroup $L_w \cap x^{-1}Q_wx \subset L_w$ is $L_w \cap x^{-1}M_wx$.

Denote the natural \emph{conjugation-by-$x$} functor that sends a representation of $xLx^{-1} \cap M_w$ to a representation of $L_w \cap x^{-1}M_wx$ by $(\cdot)^x$. Moreover, let $W(L_w, M_w)$ be the subset of $x \in W(P_w, Q_w)$ such that $xL_wx^{-1} \cap M_w = M_w$, and so $L_w \cap x^{-1}M_wx = x^{-1}M_wx$. Note that this does not imply that $L_w \cap x^{-1}Q_wx$ is equal to $x^{-1}Q_wx$ but rather that its Levi subgroup is $x^{-1}M_wx$.

The following is a version of \cite[Theorem 6.3.5]{Cas95} that is adapted to our setting and notation.

\begin{lemma} \label{P-Jacquet filtration}
    Let $Q_w \subset P_w$ denote standard parabolic subgroups of $G_w$ as above and let $\sigma_w$ be an irreducible supercuspidal representation of $M_w$.
    
    There exists a filtration, indexed by $W(L_w, M_w)$, of the $L_w$-representation $\r{P_w}{G_w} \ind{Q_w}{G_w} \sigma_w$ such that the subquotient corresponding to $x \in W_{L_w}$ is isomorphic to $\ind{L_w \cap x^{-1}Q_wx}{L_w} \sigma_w^x$  and the one corresponding to $x=1$ is a subrepresentation.
\end{lemma}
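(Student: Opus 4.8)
The plan is to read off Lemma \ref{P-Jacquet filtration} from the general Bernstein--Zelevinsky geometric lemma for normalized Jacquet modules of parabolic inductions --- in the form of \cite[Section VI.5.1]{Ren10}, cf.\ \cite[Theorem 6.3.5]{Cas95} --- and then to throw away the pieces that vanish because $\sigma_w$ is supercuspidal.

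First I would invoke the geometric lemma in its unrestricted form. Since $Q_w \subseteq P_w$ are standard parabolics of $G_w$ with Levi factors $M_w \subseteq L_w$, the set $W(P_w,Q_w)$ is a complete set of representatives for the double cosets $W_{L_w}\backslash W/W_{M_w}$ (\cite[Section V.4.7]{Ren10}), and the geometric lemma produces an exhaustive filtration of the $L_w$-module $\r{P_w}{G_w}\ind{Q_w}{G_w}\sigma_w$, whose steps are indexed by $W(P_w,Q_w)$, ordered compatibly with the closure order on the double cosets $P_w\backslash G_w/Q_w$, and whose subquotient attached to $x$ is
\[
    \ind{L_w \cap x^{-1}Q_w x}{L_w}
    \Bigl( (\cdot)^x \circ \r{M_w \cap xP_w x^{-1}}{M_w}\sigma_w \Bigr).
\]
The double coset indexed by $x=1$ is $P_wQ_w = P_w$, the minimal one; its subquotient sits at the bottom of the filtration, so the $x=1$ term is a subrepresentation.

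Next I would use supercuspidality to prune the filtration. For $x \in W(P_w,Q_w)$ the inner factor $\r{M_w\cap xP_wx^{-1}}{M_w}\sigma_w$ is a Jacquet module of the supercuspidal representation $\sigma_w$ of $M_w$ along the parabolic $M_w\cap xP_wx^{-1}$, hence vanishes unless that parabolic is all of $M_w$, i.e.\ unless $M_w \subseteq xP_w x^{-1}$. Since (by \cite[Section V.4.7]{Ren10}) the parabolic $xP_wx^{-1}\cap M_w$ of $M_w$ has Levi factor $xL_wx^{-1}\cap M_w$, and a parabolic equal to the whole group has Levi the whole group, this is equivalent to $xL_wx^{-1}\cap M_w = M_w$, that is, to $x \in W(L_w,M_w)$. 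For such $x$ the inner factor is simply $\sigma_w$, and the subquotient becomes $\ind{L_w\cap x^{-1}Q_wx}{L_w}\sigma_w^x$, with $\sigma_w^x = (\cdot)^x\sigma_w$ regarded as a representation of $L_w\cap x^{-1}M_wx = x^{-1}M_wx$, exactly as in the statement. Because $1 \in W(L_w,M_w)$ (as $M_w \subseteq L_w$), discarding the steps with $x \notin W(L_w,M_w)$ --- all of which have zero subquotient --- merely collapses consecutive steps of the filtration, preserves the nonzero subquotients, and keeps the $x=1$ term at the bottom, hence a subrepresentation.

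The only real work is dictionary-matching, and that is where I expect the main (if modest) obstacle to lie. The geometric lemma in \cite{Cas95, Ren10} indexes its filtration by Weyl elements that appear with an inversion between the two sides (the subquotient for $w$ there involves $w^{-1}$ on the $Q_w$-side and $w$ on the $P_w$-side), so one has to identify their index $w$ with the paper's $x = w^{-1}$ and then confirm that the conjugation functor $(\cdot)^x$ and the notation $\sigma_w^x$ fixed just before the statement are consistent with this. Most delicate of all is checking that the filtration in the cited references is normalized so that the closed double coset $P_wQ_w$ occupies the \emph{bottom} of the filtration, yielding a subrepresentation rather than a quotient; once that convention is pinned down, no actual computation remains.
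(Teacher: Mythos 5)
Your proposal follows essentially the same route as the paper's own proof: apply the Bernstein--Zelevinsky geometric lemma to get the filtration indexed by $W(P_w,Q_w)$, use supercuspidality of $\sigma_w$ to show the graded piece at $x$ vanishes unless $x \in W(L_w,M_w)$, and observe that the $x=1$ term sits at the bottom and hence is a subrepresentation. The extra care you take in spelling out the equivalence $M_w \subseteq xP_wx^{-1} \Leftrightarrow xL_wx^{-1}\cap M_w = M_w$ via the Levi description, and in flagging the inversion convention between the references' indexing and the paper's, is sound bookkeeping but does not change the argument.
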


\begin{proof}
    In this proof, we drop the subscript $w$ to lighten the notation.
    
    The Bernstein-Zelevinsky geometric lemma (see \cite[Section VI.5.1]{Ren10}) states that there exits a filtration of $\r{P}{G} \ind{Q}{G} \sigma$ such that the corresponding graded pieces are isomorphic to
    \[
	\ind{L \cap x^{-1}Qx}{L} \left( \r{xPx^{-1} \cap M}{M} \sigma \right)^x
    \]
    as $x$ runs over all elements of $W(P,Q)$. Moreover, one can order the filtration so that the factor corresponding to $\sigma$ (i.e. the graded piece corresponding to $x = 1$) is a subrepresentation of $\r{P}{G} \ind{Q}{G} \sigma$.

    Since $\sigma$ is supercuspidal, the graded piece corresponding to $x \in W(P,Q)$ is nonzero if and only if $xLx^{-1} \cap M = M$, i.e. $x \in W(L, M)$. For such an $x$, the graded piece is clearly isomorphic to $\ind{L \cap x^{-1}Qx}{L} \sigma^x$.
\end{proof}

\subsection{Main Theorems.} \label{BK types of Pord rep}
For simplicity, we assume that $\pi_p$ satisfies the following hypothesis :
\begin{hypothesis} \label{Qw equals Pw}
    The parabolic subgroup $Q_w$ for $\pi_w$ from Lemma \ref{Jacquet Lemma} is equal to $P_w$ for all $w \in \Sigma_p$. In particular $\sigma_w$ is a supercuspidal representation of $L_w$.
\end{hypothesis}

\begin{remark} \label{sc support assumption}
    This hypothesis is certainly restrictive in our context. For instance, if $\pi_p$ is $B$-ordinary, then Lemma \ref{Jacquet Lemma} implies that all local factors $\pi_w$ lie in a principal series. Furthermore, if $\pi_p$ is $B$-ordinary (i.e. \emph{ordinary} in the usual sense) then it follows immediately from our definitions that it is also $P$-ordinary. Therefore, the case $Q_w \neq P_w$ can certainly occurs.
    
    One can argue that this is not a major issue since in the situation above, if $\pi_p$ is $B$-ordinary than there is little interest in considering its structure as a $P$-ordinary representation. One only obtains less information this way. However, if $\pi_p$ is a general $P$-ordinary representation whose local factors $\pi_w$ lie in a principal series, it is not necessarily true that $\pi_p$ is also $B$-ordinary. In general, if $\pi_p$ is $P$-ordinary and the supercuspidal support of all $\pi_w$ is $Q_w$, then $\pi_p$ might not be $Q$-ordinary, where $Q = \prod_w Q_w$. Therefore, the hypothesis above restricts us to study certain $P$-ordinary representations that are not $Q$-ordinary with respect to any smaller parabolic $B \subset Q \subsetneq P$.
    
    In subsequent work, the author plans to generalize the theory and results below for any parabolic subgroup $Q_w \subset P_w$ using the theory of \emph{covers} of types developed in \cite{BusKut98, BusKut99} and \emph{typical representations} as in \cite{Lat21}.
\end{remark}

\begin{theorem} \label{holo canonical vector}
    Let $\pi$ be a $P$-ordinary representation as above such that its weight $\kappa$ satisfies Inequality \eqref{ineq kappa sigma}. Let $\pi_w \subset \ind{P_w}{G_w} \sigma_w$ be its component at $w \in \Sigma_p$ as above, a $P_w$-ordinary representation.
    \begin{enumerate}
        \item[(i)] For $r \gg 0$, let $\phi, \phi' \in \pi_w^{I_r}$ be $P_w$-ordinary vectors. Let $\varphi$ and $\varphi'$ be their respective image in $\ind{P_w}{G_w} \sigma_w$. If $\phi \neq \phi'$, then $\varphi(1) \neq \varphi'(1)$.
        \item[(ii)] For $r \gg 0$, let $\phi \in \pi_w^{I_r}$ be a simultaneous eigenvector for the $u_{w, D_w(j)}$-operators that is not $P_w$-ordinary. Let $\varphi$ be its image in $\ind{P_w}{G_w} \sigma_w$. Then, $\varphi(1) = 0$.
        \item[(iii)] Let $\tau_w$ be a smooth irreducible representation of $L_w(\OO_w)$. Assume there exists an embedding $\tau_w \hookrightarrow \sigma_w$ over $L_w(\OO_w)$. Let $X_w$ be the vector space associated to $\tau_w$, viewed as a subspace of the one associated to $\sigma_w$. 
        
        Then, given $\alpha \in X_w$, there exists some $r \gg 0$ such that $\tau_w$ factors through $L_w(\OO_w/\p_w^r\OO_w)$ and some (necessarily unique) $P_w$-ordinary $\phi_{r,\alpha} \in \pi_w^{I_r}$ such that $\varphi_{r,\alpha}(1) = \alpha$, where $\varphi_{r,\alpha}$ is the image of $\phi_{r,\alpha}$ in $\ind{P_w}{G_w} \sigma_w$. Furthermore, the support of $\varphi_{r,\alpha}$ contains $P_w I_{w,r}$. The map $\alpha \mapsto \phi_{r,\alpha}$ yields an embedding of $L_w(\OO_w)$-representations 
        \[
            \tau_w \hookrightarrow \pi_{w,r}^{\Pword} \ .
        \]
    \end{enumerate}
\end{theorem}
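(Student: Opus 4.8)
The plan is to work entirely on the $\GL_n$-side, with $\pi_w \subset \ind{P_w}{G_w}\sigma_w$ fixed, and to use the ``evaluation at $1$'' map $\varphi \mapsto \varphi(1)$ together with the explicit coset computation for the $u_{w,D_w(j)}$-operators done just before Lemma \ref{lma U_j}. Throughout, drop the subscript $w$ to lighten notation, write $i_j = D(j)$, $N = \cap_r I_r = P^u(\OO_w)$, and recall from (the proof of) Lemma \ref{lma U_j} the canonical $L$-equivariant isomorphism $V_{\inv}^N \cong V_P = r_P^G\pi$ under which each $U_{i_j}^{\GL}$ acts as $\delta_P(t_{i_j})^{-1}t_{i_j}$, and from \eqref{s_j eigenvalue} the normalization that makes a $P$-ordinary vector $\phi$ a simultaneous eigenvector for the $t_{i_j}$ with $p$-adic unit eigenvalues. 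The key structural input is Lemma \ref{P-Jacquet filtration}: under Hypothesis \ref{Qw equals Pw}, $r_P^G\ind{P}{G}\sigma$ has an $L$-filtration whose $x=1$ piece is $\sigma$ itself (a subrepresentation) and whose other subquotients are $\ind{L\cap x^{-1}Px}{L}\sigma^x$ for $x \in W(L,M) \setminus \{1\}$; since $\sigma$ is supercuspidal, one checks that the central characters of $\sigma^x$ (on $Z(L)$) differ from that of $\sigma$, so the $t_{i_j}$-eigenvalue being a $p$-adic unit with the precise value $|\kappa'(t_{i_j})|_p\delta_P(t_{i_j})c_{i_j}$ forced by \eqref{s_j eigenvalue} singles out exactly the $\sigma$-subrepresentation. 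This uses Inequality \eqref{ineq kappa sigma} to guarantee the relevant eigenvalue comparison genuinely separates $\sigma$ from the other subquotients.

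For (i) and (ii): a $P_w$-ordinary vector $\phi$ lies in $V_{\inv}^N$, hence maps isomorphically (via $V_{\inv}^N \cong V_P$) to its image in $r_P^G(\ind{P}{G}\sigma)$, and by the previous paragraph this image lies in the $\sigma$-subrepresentation. Unwinding the identification, the composite $\pi_w \hookrightarrow \ind{P}{G}\sigma \twoheadrightarrow r_P^G\ind{P}{G}\sigma \to \sigma$ is, up to a nonzero scalar, exactly $\varphi \mapsto \varphi(1)$ followed by projection $W_{\sigma} \to W_{\sigma}$ (the Jacquet-module-to-$\sigma$ map is evaluation at $1$ in the induced model). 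Thus $\phi \mapsto \varphi(1)$ is injective on the $P_w$-ordinary subspace, giving (i). For (ii): if $\phi$ is a simultaneous $u_{w,D_w(j)}$-eigenvector that is not $P_w$-ordinary, then either some eigenvalue is not a unit, so $\phi \in V_{\nil}^N$ (on which $U_{i_j}^{\GL}$ is nilpotent, forcing the eigenvalue $0$, whence $\phi$ maps to $0$ in $V_P$ and $\varphi(1)=0$), or all eigenvalues are units but $\phi \in V_{\inv}^N$ maps into one of the other subquotients $\ind{L\cap x^{-1}Px}{L}\sigma^x$; in the latter case the image in the $\sigma$-subrepresentation itself is zero (the filtration pieces for $x\neq 1$ have no $\sigma$-eigenvalue contribution by the central-character argument) and again $\varphi(1)=0$. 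One must be slightly careful that $\varphi(1)$ sees the $\sigma$-component and not the whole Jacquet module, but since evaluation at $1$ on $\ind{P}{G}\sigma$ lands in $W_\sigma$ directly, this is automatic.

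For (iii): given $\alpha \in X_w \subset W_\sigma$, the goal is to produce a $P_w$-ordinary $\phi_{r,\alpha} \in \pi_w^{I_r}$ with $\varphi_{r,\alpha}(1) = \alpha$. First use that $\ind{P}{G}\sigma \supset \pi_w$ and that $\sigma$ occurs as a subrepresentation of $r_P^G\ind{P}{G}\sigma$, and that the $\sigma$-part is realized inside $V_{\inv}^N$ via the inverse of $V_{\inv}^N \cong V_P$; so there is a unique vector $\psi_\alpha \in V_{\inv}^N \cap \pi_w$ mapping to $\alpha$ in $W_\sigma$. This $\psi_\alpha$ is automatically $P_w$-ordinary (it lies in $V_{\inv}^N$ and, being in the $\sigma$-part, is a $t_{i_j}$-eigenvector with the unit eigenvalues prescribed by \eqref{s_j eigenvalue}, equivalently a $u_{w,D_w(j)}$-eigenvector with unit eigenvalue), and $\psi_\alpha \in V^{I_r}$ for $r$ large enough since $V^N = \cup_r V^{I_r}$; since $\tau_w \hookrightarrow \sigma_w$ over $L_w(\OO_w)$, $\tau_w$ factors through $L_w(\OO_w/\p_w^r)$ for $r$ large, so we may take $r$ also large enough for that. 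Set $\phi_{r,\alpha} := \psi_\alpha$; uniqueness is part (i). The support statement follows because $\varphi_{r,\alpha} \in \ind{P}{G}\sigma$ with $\varphi_{r,\alpha}(1) = \alpha \neq 0$ and $\varphi_{r,\alpha}$ is $I_{w,r}$-fixed and $P$-semi-invariant on the $\sigma$-part, so $\varphi_{r,\alpha}(p k) = \delta_P^{1/2}(p)\sigma(p)\alpha \neq 0$ for $p \in P$, $k \in I_{w,r}$ — i.e. $P_w I_{w,r} \subset \supp \varphi_{r,\alpha}$. Finally, the assignment $\alpha \mapsto \phi_{r,\alpha}$ is $L_w(\OO_w)$-equivariant: $L_w(\OO_w) \subset I_r^0$ normalizes $I_r$ and acts on $\pi_w^{I_r}$; under $V_{\inv}^N \cong V_P$ and $V_P \twoheadrightarrow W_\sigma$ this action intertwines with the $\tau_w$-action on $X_w$ (because the projection to $\sigma$ is $L$-equivariant and $\tau_w \hookrightarrow \sigma_w$ is $L_w(\OO_w)$-equivariant), so $\alpha \mapsto \phi_{r,\alpha}$ is an $L_w(\OO_w)$-embedding $\tau_w \hookrightarrow \pi_{w,r}^{\Pword}$; injectivity is again (i).

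The main obstacle is pinning down precisely that ``$\varphi \mapsto \varphi(1)$'' on $\pi_w$ agrees, up to a nonzero scalar, with the composite $\pi_w \hookrightarrow \ind{P}{G}\sigma \to r_P^G\ind{P}{G}\sigma \to \sigma$ restricted to the $\sigma$-subrepresentation — and that the $P_w$-ordinary subspace lands exactly in that $\sigma$-part and nowhere else in the filtration. This is where Hypothesis \ref{Qw equals Pw}, the supercuspidality of $\sigma_w$, and Inequality \eqref{ineq kappa sigma} all get used: supercuspidality kills the subquotients of Lemma \ref{P-Jacquet filtration} outside $W(L,M)$ and makes the central characters of the remaining $\sigma^x$ genuinely distinct, while \eqref{ineq kappa sigma} ensures the normalized eigenvalue condition \eqref{s_j eigenvalue} is compatible with (only) the $x=1$ piece. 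The bookkeeping translating ``eigenvector for $u_{w,D_w(j)}$ with unit eigenvalue'' through the normalizations of \eqref{norm u w j} and \eqref{s_j eigenvalue} into ``lies in the $\sigma$-subrepresentation of the Jacquet module'' is the technical heart; everything else is formal manipulation of Jacquet modules, Frobenius reciprocity, and the torsor structure $I_r^0/I_r \cong \LL_r$.
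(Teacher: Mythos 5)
Your overall strategy is the same as the paper's: use the Casselman--Hida isomorphism $V^N_{\inv}\xrightarrow{\sim}V_P$, push the $P_w$-ordinary subspace into the $P$-Jacquet module of $\ind{P_w}{G_w}\sigma_w$, invoke Lemma~\ref{P-Jacquet filtration} for the graded pieces $\sigma_w^x\delta_{P_w}^{1/2}$ indexed by $W(L_w,L_w)$, use the regularity of $\theta=|\kappa'|^{-1}\delta_{P_w}^{-1/2}$ (coming from Inequality~\eqref{ineq kappa sigma}) to single out the $x=1$ piece, and identify the composite with evaluation at $1$. Parts (i) and (iii) are essentially in order, up to some imprecision in citing \eqref{s_j eigenvalue} for what is really a consequence of ``$\pi_w$ is $P_w$-ordinary, hence $\beta_1$ takes unit values''.

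Part (ii) has a genuine gap. You argue: ``some eigenvalue is not a unit, so $\phi\in V^N_{\nil}$ (...forcing the eigenvalue $0$),'' and separately ``all eigenvalues are units but $\phi$ maps into one of the other subquotients.'' Neither branch is right. $V^N_{\nil}$ is the part on which the operators $U^{\GL}_{w,D_w(j)}$ are nilpotent, i.e.\ literally have eigenvalue $0$ (this is the Fitting decomposition, not the ordinary/non-ordinary decomposition); a simultaneous eigenvector in $V^N_{\inv}$ can perfectly well have a nonzero eigenvalue that is not a $p$-adic unit. So ``non-unit $\Rightarrow V^N_{\nil}$'' is false, and that is precisely the main case you need to treat. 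Conversely, a simultaneous eigenvector all of whose $u^{\GL}_{w,D_w(j)}$-eigenvalues are units satisfies $e_w\phi=\phi$ and is therefore $P_w$-ordinary by definition, so your second branch never occurs. The missing argument is the one the paper actually makes: for $\phi\in V^N_{\inv}$ with some non-unit eigenvalue, apply the equivariance of $i\circ s_P:V^N_{\inv}\to\sigma_w\delta_{P_w}^{1/2}$ under $u^{\GL}_{w,D_w(j)}$. That operator acts on the target by the scalar $\beta_1(s_j)$, which is a $p$-adic unit because $\pi_w$ is $P_w$-ordinary; since $\phi$ has a different (non-unit) eigenvalue, its image in $\sigma_w\delta_{P_w}^{1/2}$ must vanish, i.e.\ $\varphi(1)=0$. (The remaining case $\phi\in V^N_{\nil}$ then follows trivially, since $\pr_{P_w}$ kills $V^N_{\nil}$ by invertibility of $t_{w,D_w(j)}$ on $V_{P_w}$.)
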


\begin{proof}
    This proof is inspired by the one of \cite[Lemma 8.3.2]{EHLS} which is itself inspired by arguments in \cite[Section 5]{Hid98}. By abuse of notation, we will always write $L$ when we mean $L(\KK_w)$. However, we still write $L(\OO_w)$ when referring to its maximal compact subgroup. From now on, we omit the subscript $w$ in this proof.
	
    As explained above, the space of $P$-ordinary vector is contained in $V^N_{\inv}$ and $\pr_P : V \to V_P$ induces an isomorphism on $V^N_{\inv} \xrightarrow{\sim} V_P$ which is equivariant for the action of $L(\OO)$ and the $u^{\GL}_{D(j)}$-operators. Let $s_P : V_P \to V^N_{\inv}$ denote its inverse.

    Consider the natural inclusion $V \hookrightarrow \ind{P}{G} \sigma$ and the corresponding embedding $V_P \hookrightarrow (\ind{P}{G} \sigma)_P$ as representations of $L$, using the fact that the $P$-Jacquet module functor is exact. Note here that we are using the unnormalized version of the $P$-Jacquet functor.

    Consider the filtration indexed by $W(L, L)$ of $(\ind{P}{G} \sigma)_P$ from Lemma \ref{P-Jacquet filtration}. We use a version with unnormalized $P$-Jacquet functor, hence the graded piece corresponding to $x \in W(L, L)$ is isomorphic to $\sigma^x \delta_P^{1/2}$.
 
    First, we claim that $\pr_P$ maps any simultaneous eigenvector for the $u_{D(j)}$-operators whose eigenvalues are all $p$-adic units inside that subrepresentation.

    One readily checks that $x \in W(P,P)$ is in $W(L,L)$ if and only if it simply permutes the $\GL_{n_k}(\KK_w)$-blocks of $L$ of the same size. In particular, exactly one such $x \in W(L,L)$ acts trivially on the center $Z(L)$ of $L$, namely $x = 1$, while any other $1 \neq x \in W(L,L)$ stabilizes but acts non-trivially on $Z(L)$.
	
    The operator $u^{\GL}_{D(j)}$ acts on $\sigma^x \delta_P^{1/2}$ via multiplication by
    \[
        \beta_x(s_j) = 
            \absv{\kappa' (s_j)}^{-1}_p 
            \delta_P^{-1/2}(s_j) 
            \omega_{\sigma}^x(s_j)
    \]
    where $s_j = t_{D(j)}$ and $\omega_{\sigma} : Z(L) \to \CC^\times$ is the central character of $\sigma$.

    These $\beta_x$ define characters of $Z(L)$. The $P$-ordinarity assumption implies that $\beta_1(s_j)$ is a $p$-adic unit for all $1 \leq j \leq t + r$ and therefore $\beta_1(s)$ is a $p$-adic unit for all $s \in Z(L)$. We claim that given any $x \in W(L, L)$, the values of $\beta_x$ on $Z(L)$ are all $p$-adic units if and only if $x = 1$.

    By recalling that $\delta_P$ and $\delta_B$ agree on $Z(L)$ and proceeding exactly as in the proof of \cite[Lemma 8.3.2]{EHLS}, one uses Inequality \eqref{ineq kappa sigma} to show that 
    \[
	\theta = |\kappa'|^{-1} \delta_P^{-1/2}
    \]
    is a regular character of $Z(L)$ and $\beta_x$ satisfies the above property if and only if $\theta^x = \theta$. By regularity, this only occurs when $x = 1$.

    The argument above shows that under the natural map
    \begin{equation} \label{V N inv to rho delta_P}
        V_{\inv}^N \hookrightarrow 
            V \twoheadrightarrow 
                V_P \hookrightarrow 
                    (\ind{P}{G} \sigma)_P \ ,
    \end{equation}
    the subspace of $P$-ordinary vector of $V$ injects into the subrepresentation $\sigma \delta_P^{1/2}$ of $(\ind{P}{G} \sigma)_P$.

    This map is exactly the composition of $s_P : V_{\inv}^N \xrightarrow{\sim} V_P$ with the map $i : V_P \to \sigma \delta_P^{1/2}$ corresponding under the Frobenius reciprocity to the inclusion $v \mapsto f_v$ of $V$ into $\ind{P}{G} \sigma$. In other words, this map is $v \mapsto f_v(1)$. Therefore, a $P$-ordinary vector $v \in V^N$ is uniquely determined by $f_v(1)$. This shows part (i). 

    For part (ii), pick a simultaneous eigenvector $v \in V^N_{\inv}$ for the $u^{\GL}_{D(j)}$-operators that is not $P$-ordinary. Then, as above, the map $i \circ s_P : V^N_{\inv} \to \sigma \delta_P^{1/2}$ maps $v$ to $f_v(1)$. By equivariance of the action of the $u^{\GL}_{D(j)}$-operators on both sides, we must have $f_v(1) = 0$.

    To show part (iii), consider $\alpha$ as an element of the vector space associated to $\sigma$, which is also the one associated to $\sigma \delta_P^{1/2} \subset V_P$. Let $\phi = s_P(\alpha) \in V_{\inv}^N$. In particular, $\phi \in \pi^{I_r}$ for some $r \gg 0$. We may assume that $r$ is sufficiently large so that $\tau$ factors through $L(\OO/\p^r\OO)$.

    Finally, since $\pr_P$ is equivariant under the action of the $u^{\GL}_{D(j)}$-operators and these act on $\pr_P(\phi) = \alpha$ via multiplication by the $p$-adic unit $\beta(s_j)$, one concludes that $\phi$ is $P$-ordinary. Proceeding as in the proof of part (i), we obtain $\varphi(1) = \pr_P \phi = \alpha$, where $\varphi \in \ind{P}{G} \sigma$ is the function corresponding to $\phi$. 

    Therefore, $\phi_{r,\alpha} := \phi$ is the desired vector, necessarily unique by part (i). The last statement holds because $s_P$ is $L(\OO_w)$-equivariant.
\end{proof}

\begin{remark}
    As a consequence of the proof for part (i) above, we see that $\pi_w$ is $P_w$-ordinary  (of level $r \gg 0$) if and only if
    \begin{equation} \label{central character condition}
        \beta(s) = 
            \absv{\kappa'(s)}_p^{-1}
            \delta_{P_w}^{-1/2}(s)
            \w_\sigma(s)
    \end{equation}
    is a $p$-adic unit for all $s \in Z(L_w(\KK_w))$. In other words, not all supercuspidal representation $\sigma_w$ can occur. Furthermore, when $\pi_w$ is $P_w$-ordinary (of level $r \gg 0$), the $u_{w, D_w(j), \kappa}^{\GL}$-eigenvalue of all the $P_w$-ordinary vectors is $\beta(t_{w, D_w(j)})$.
\end{remark}

\begin{remark} \label{covers of types to P-Iwahori}
    We now view $\tau_w$ as as a representation of $I_{w, r}^0$ via the identity $I_{w,r}^0 / I_{w,r} = L_w(\OO_w/\P_w^r\OO_w)$. Clearly, the embedding constructed in (iii) of Theorem \ref{holo canonical vector} is an embedding of $I_{w,r}^0$.  This shows that $\pi_w$ contains a cover of $\tau_w$ from $L_w$ to $\GL_n(\OO_w)$, in the sense of \cite{BusKut98, BusKut99}, in its subspace of $P_w$-ordinary vectors. In fact, the above theorem shows that this cover is exactly the space of $P_w$-ordinary vectors of type $\tau_w$, i.e. the $\tau_w$-isotypic subspace $\pi_{w,r}^{\Pword}[\tau_w]$ for all $r \gg 0$.
\end{remark}

Consider the BK-type $(L_w(\OO_w), \tau_w)$ of the supercuspidal representation $\sigma_w$, as defined in Section \ref{sc support}. Let $\tau$ be the representation of $L_P(\ZZ_p)$ corresponding to $\otimes_{w \in \Sigma_p} \tau_w$ under the natural isomorphism $L_P = \prod_{w \in \Sigma p} L_w$ induced by the identification \eqref{facto G(Qp)}. We refer to $\tau$ as the \emph{BK-type of $\pi_p$.}

\begin{theorem} \label{Pord structure thm}
    Let $\pi$ be a holomorphic $P$-ordinary representation of weight type $(\kappa, K)$ such that Inequality \eqref{ineq kappa sigma} holds. Let $\tau$ be the BK-type of $\pi_p$. Then,
    \[
        \hom_{L_P(\ZZ_p)}
        (
            \tau, 
            \pi_{p, r}^{\Pord}
        )
    \]
    is 1-dimensional for all $r \gg 0$. In other words, the space $\pi_{p}^{(P, \tau)} = \pi_{p,r}^{(P, \tau)}$ of $P$-ordinary vectors of type $\tau$ is independent of $r \gg 0$ and
    \[
        \dim 
        \left(
            \pi_p^{(P, \tau)}
        \right) 
        = \dim \tau
    \]
\end{theorem}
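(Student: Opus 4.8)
The plan is to reduce the statement to the local factors $\pi_w$ at the places $w \in \Sigma_p$ and then to combine Theorem \ref{holo canonical vector} with the multiplicity-one property of Bushnell--Kutzko types. First I would invoke the factorization \eqref{facto pi p}, which gives $\pi_p^{I_r} \cong (\mu_p)^{\ZZ_p^\times} \otimes \bigotimes_{w \in \Sigma_p} \pi_w^{I_{w,r}}$ and, restricting to $P$-ordinary subspaces, $\pi_{p,r}^{\Pord} \cong (\mu_p)^{\ZZ_p^\times} \otimes \bigotimes_{w \in \Sigma_p} \pi_{w,r}^{\Pword}$. Since $\pi$ is $P$-ordinary the character $\mu_p$ is unramified, so $(\mu_p)^{\ZZ_p^\times}$ is one-dimensional and $L_P(\ZZ_p)$ acts trivially on it (through its $\QQ_p^\times$-component). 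As $\tau \cong \bigotimes_{w} \tau_w$ under the identification $L_P(\ZZ_p) \cong \prod_{w \in \Sigma_p} L_w(\OO_w)$, it therefore suffices to show that, for each $w$, the space $\hom_{L_w(\OO_w)}(\tau_w, \pi_{w,r}^{\Pword})$ is one-dimensional for all $r \gg 0$, where $(L_w(\OO_w), \tau_w)$ is the BK-type of the supercuspidal representation $\sigma_w$ of $L_w$ (which is genuinely supercuspidal on the Levi $L_w$ by Hypothesis \ref{Qw equals Pw}).

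For the upper bound I would use the description of the $P_w$-ordinary subspace obtained inside the proof of Theorem \ref{holo canonical vector}: the projection to the (unnormalized) $P_w$-Jacquet module identifies $\pi_{w,r}^{\Pword}$ with a subspace of the subrepresentation $\sigma_w \delta_{P_w}^{1/2}$ of $(\ind{P_w}{G_w}\sigma_w)_{P_w}$, via the $L_w(\OO_w)$-equivariant map $\phi \mapsto \varphi(1)$. Because $\delta_{P_w}$ is trivial on the compact group $L_w(\OO_w)$, this yields an $L_w(\OO_w)$-equivariant injection $\pi_{w,r}^{\Pword} \hookrightarrow \sigma_w$, hence an injection $\hom_{L_w(\OO_w)}(\tau_w, \pi_{w,r}^{\Pword}) \hookrightarrow \hom_{L_w(\OO_w)}(\tau_w, \sigma_w)$. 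Since $(L_w(\OO_w), \tau_w)$ is a type for the supercuspidal inertial class $\s_w = [L_w, \sigma_w]_{L_w}$, the representation $\sigma_w$ lies in $\Rep^{\s_w}(L_w)$ and $\hom_{L_w(\OO_w)}(\tau_w, -)$ sends it to a simple module over the Hecke algebra $\HH(L_w, \tau_w)$, which for a supercuspidal class is (Morita equivalent to) a Laurent-polynomial ring with only one-dimensional simple modules; thus $\dim \hom_{L_w(\OO_w)}(\tau_w, \sigma_w) = 1$, which is the multiplicity-one/unicity statement for the maximal type (cf.\ \cite[Proposition 5.6]{BusKut98}, the Hecke-algebra formalism of \cite{BusKut99}, and \cite[Theorem 1.3]{Pas05}). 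For the lower bound, $\tau_w$ being a type contained in $\sigma_w$ embeds into $\sigma_w$ over $L_w(\OO_w)$, so part (iii) of Theorem \ref{holo canonical vector} produces, for $r \gg 0$, an $L_w(\OO_w)$-equivariant embedding $\tau_w \hookrightarrow \pi_{w,r}^{\Pword}$ and the Hom space is nonzero. Combining the two bounds gives $\dim \hom_{L_w(\OO_w)}(\tau_w, \pi_{w,r}^{\Pword}) = 1$, and the $\tau_w$-isotypic subspace $\pi_{w,r}^{(P_w, \tau_w)} = \pi_{w,r}^{\Pword}[\tau_w]$ is isomorphic to $\tau_w$, of dimension $\dim \tau_w$.

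Independence of $r$ then follows: for $r' \geq r \gg 0$ the inclusion $\pi_w^{I_{w,r}} \subseteq \pi_w^{I_{w,r'}}$ restricts to an inclusion $\pi_{w,r}^{(P_w,\tau_w)} \subseteq \pi_{w,r'}^{(P_w,\tau_w)}$ of spaces of the same finite dimension $\dim \tau_w$, hence an equality. Passing back through the product decomposition gives $\pi_p^{(P,\tau)} = \pi_{p,r}^{(P,\tau)}$ for all $r \gg 0$ and
\[
    \dim\bigl(\pi_p^{(P,\tau)}\bigr) = \prod_{w \in \Sigma_p} \dim \tau_w = \dim \tau,
\]
equivalently $\hom_{L_P(\ZZ_p)}(\tau, \pi_{p,r}^{\Pord})$ is one-dimensional. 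The crux of the argument is the upper bound in the second step: transporting the $P_w$-ordinary subspace into $\sigma_w$ (which Theorem \ref{holo canonical vector} already accomplishes) and then knowing that the BK-type $\tau_w$ occurs in the supercuspidal $\sigma_w$ with multiplicity exactly one. Hypothesis \ref{Qw equals Pw} is what makes this available, since it guarantees that $\sigma_w$ is supercuspidal on the Levi $L_w$ so that its BK-type lives on the maximal compact $L_w(\OO_w)$ and the unicity theory of \cite{BusKut98, BusKut99, Pas05} applies verbatim.
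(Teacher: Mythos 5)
Your proof is correct and follows essentially the same approach as the paper's: reduce through the factorization \eqref{facto pi p} to each local factor $\pi_w$, use Theorem \ref{holo canonical vector}(iii) to produce the embedding $\tau_w \hookrightarrow \pi_{w,r}^{\Pword}$ (lower bound), use the $L_w(\OO_w)$-equivariant injection $\pi_{w,r}^{\Pword}\hookrightarrow\sigma_w$ coming from $\phi\mapsto\varphi(1)$ together with the multiplicity-one property of the BK-type in the supercuspidal $\sigma_w$ (upper bound), and then deduce $r$-independence by comparing dimensions along the inclusions $\pi_{w,r}^{(P_w,\tau_w)}\subseteq\pi_{w,r'}^{(P_w,\tau_w)}$. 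The paper's own proof is terser (it cites \cite[Proposition 5.6]{BusKut98} directly rather than spelling out the Hecke-algebra/Morita argument, and leaves the factorization over $\Sigma_p$ and the $r$-independence implicit), but the content is the same; your additional detail, in particular the explicit upper-bound step via the Jacquet-module injection, makes the logical structure more transparent without changing the argument.
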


\begin{proof}
    Fix $w \in \Sigma_p$ and consider $\pi_{w,r}^{\Pword} = e_w \pi_w^{I_{w,r}}$ for $r \gg 0$. By Theorem \ref{holo canonical vector} (iii), there is a natural isomorphism
    \[
        \hom_{L_w(\OO_w)}(\uptau_w, \sigma_w) = \hom_{L_w(\OO_w)}(\uptau_w, \pi_{w,r}^{\Pword}[\uptau_w]) \ ,
    \]
    where $\uptau_w$ is any smooth irreducible representation of $L_w(\OO_w)$. From \cite[Proposition 5.6]{BusKut98}, we know that the BK-type $\tau_w$ of $\pi_w$ has multiplicity one in $\sigma_w$. Therefore, the result follows by applying the above to $\uptau_w = \tau_w$. 
\end{proof}

\section{$P$-anti-ordinary representations and opposite unitary groups.} \label{P-a-ord thms}

In this section, we first define the dual notion of \emph{$P$-anti-ordinary} representations and analyze the structure of $P$-anti-ordinary subspaces using our results above. Then, we again follow the material of \cite[Section 6.2]{EHLS} to compare the $P$-(anti-)ordinary representations on $G = G_1$ and its opposite unitary group $G_2$

The results on $G_1$ directly apply to $G_2$ simply by replacing $P$ with its opposite parabolic $P^{\opp}$. However, using standard intertwining operators, one obtains results with respect to $P$ once more. Our results are greatly inspired by \cite[Sections 8.3-8.4]{EHLS}.

Moreover, as explained in the introduction of this paper, our motivation is to use the results here for explicit calculations of zeta integrals in upcoming work of the author.

\subsection{$P$-anti-ordinary representations on $G_1$} \label{a.holo and a.ord on G1}
Let $\pi$ be an anti-holomorphic cuspidal representation on $G = G_1$ of weight type $(\kappa, K_r)$. For each $w \in \Sigma_p$ and $1 \leq j \leq n$, let $t_{w,j}^- = t_{w,j}^{-1} \in G(\QQ_p)$, where $t_{w,j}$ is the element constructed in Section \ref{Pord rep defn}. Proceeding as in that section, we define
\[
    U_{w,j}^- \ \ \ ; \ \ \ u_{w,j, \kappa}^- \ \ \ ; \ \ \ u_{P,p,\kappa}^- \ \ \ ; \ \ \ e_{P, \kappa}^-
\]
by replacing $t_{w,j}^+$ by $t_{w,j}^-$ in the definitions of $U_{w,j}$, $u_{w,j,\kappa}$, $u_{P,p,\kappa}$ and $e_{P, \kappa}$. We also consider the partition $\wt{\d}_w$ of $n$ of length $r_w$ as well as its partial sums $D_w(j)$ for $1 \leq j \leq r_w$.

As in Section \ref{Pord rep defn}, the generalized eigenvalues of the action of $u_{w, D_w(j), \kappa}^-$ on $\pi_f^{K_r}$ are all $p$-adically integral. Therefore, the \emph{$P$-anti-ordinary projector} $e_{P, \kappa}^-$ has a well-defined action on $\pi_f^{K_r}$. We say that $\pi$ is $P$-anti-ordinary (of level $r$) if $e_{P, \kappa}^- (\pi_f^{K_r}) \neq 0$.

\begin{remark}
    Note that the action of $U_{w,j}^-$ (and therefore all the other operators as well) does depend on $r$. However, by abuse of notation, we do not include $r$ in the already long list of subscripts.
\end{remark}

By definition, for each $w \in \Sigma_p$, $1 \leq j \leq r_w$ and $r \geq 0$, the operator $U_{w,j}^-$ acts on 
\[
    \pi_f^{K_r} = \pi_p^{I_r} \otimes \left( \bigotimes_{l \neq p} \pi_l \right)^{K^p}
\]
via its action on $\pi_p^{I_r}$. Furthermore, by writing $\pi_p \cong \mu_p \otimes \bigotimes_{w \in \Sigma_p} \pi_w$ using isomorphism \eqref{facto pi p}, its action on $\pi_p^{I_r} = \bigotimes_{w \in \Sigma_p} \pi_w^{I_w,r}$ is induced by the action of the double coset operator $U_{w, D_w(j)}^{\GL, -} = I_{w,r}t_{w,r}^-I_{w,r}$ on $\pi_w^{I_{w,r}}$.

Let $u_{w, D_w(j)}^{\GL, -} = |\kappa'(t_{w,j})|_pU_{w, D_w(j)}^{\GL, -}$, where $\kappa'$ related to $\kappa$ as in equation \eqref{norm u w j}, and
\[
    e_w^- = 
    \lim_{m \to \infty} 
    \left( 
        \prod_{w \in \Sigma_p} 
        \prod_{j=1}^{r_w}
            u_{w, D_w(j), \kappa}^{\GL, -}
    \right)^{m!}
\]

It follows from the discussion above that the generalized eigenvalues of $u_{w, D_w(j)}^{\GL, -}$ are all $p$-adically integral and $e_w^-$ defines a projector on $\pi_w^{I_{w,r}}$.

One readily sees that $\pi$ is $P$-anti-ordinary (at $p$) over level $r$ if $\mu_p$ is unramified and each $\pi_w$ is \emph{$P_w$-anti-ordinary} of level $r$, in the sense that $e_w^- \pi_w^{I_{w,r}} \neq 0$.

\begin{lemma} \label{ord anti ord duality}
    Let $\pi_w$ as above. Then, the representation $\pi_w$ is $P_w$-anti-ordinary of some level $r \geq 0$ if and only if its contragredient $\pi_w^\vee$ is $P_w$-ordinary of level $r$. In that case, $\pi_w$ is $P$-anti-ordinary of all level $r \gg 0$.
\end{lemma}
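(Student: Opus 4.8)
The plan is to deduce the equivalence from the duality between $\pi_w$ and its contragredient $\pi_w^\vee$, in the spirit of the commented-out lemma above: under the canonical invariant pairing, the $P_w$-anti-ordinary projector on $\pi_w$ is the adjoint of the $P_w$-ordinary projector on $\pi_w^\vee$. Concretely, I would fix a nonzero $G_w$-invariant pairing $\brkt{\cdot}{\cdot}_w : \pi_w \times \pi_w^\vee \to \CC$. For every $r \geq 0$, averaging over $I_{w,r}$ identifies $(\pi_w^\vee)^{I_{w,r}}$ with the linear dual of the finite-dimensional space $\pi_w^{I_{w,r}}$, so $\brkt{\cdot}{\cdot}_w$ restricts to a perfect pairing $\pi_w^{I_{w,r}} \times (\pi_w^\vee)^{I_{w,r}} \to \CC$.

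Next I would use the standard fact that, with respect to such a pairing, the adjoint of the double-coset operator $[I_{w,r}\, g\, I_{w,r}]$ on $\pi_w$ is $[I_{w,r}\, g^{-1}\, I_{w,r}]$ on $\pi_w^\vee$ (so the coset-counting subtlety is automatically absorbed). Applied with $g = t_{w,j}^{-} = (t_{w,j}^{+})^{-1}$, this says $U_{w,j}^{-}$ on $\pi_w$ is adjoint to $U_{w,j}$ on $\pi_w^\vee$. Tracking the normalizing factors $\absv{\kappa'(t_{w,j})}_p^{\pm 1}$ — and equipping $\pi_w^\vee$ with the contragredient weight, for which these factors are the reciprocals of the ones attached to $\pi_w$ — one gets that $u_{w, D_w(j)}^{\GL, -}$ on $\pi_w$ is adjoint to $u_{w, D_w(j)}^{\GL}$ on $\pi_w^\vee$, for every $w \in \Sigma_p$ and every $j$. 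Consequently the $P_w$-anti-ordinary projector $e_w^{-}$, a $p$-adic limit of polynomials in the $u_{w, D_w(j)}^{\GL, -}$, is adjoint to the $P_w$-ordinary projector $e_w$, the corresponding limit of polynomials in the $u_{w, D_w(j)}^{\GL}$, as endomorphisms of $\pi_w^{I_{w,r}}$ and $(\pi_w^\vee)^{I_{w,r}}$ respectively.

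Since a linear endomorphism vanishes if and only if its adjoint (under a perfect pairing) does, and since $e_w^{-}$ and $e_w$ are idempotents, this gives
\[
    e_w^{-} \pi_w^{I_{w,r}} \neq 0 \iff e_w (\pi_w^\vee)^{I_{w,r}} \neq 0 ,
\]
which is exactly the statement that $\pi_w$ is $P_w$-anti-ordinary of level $r$ if and only if $\pi_w^\vee$ is $P_w$-ordinary of level $r$; as $(\pi_w^\vee)^\vee \cong \pi_w$, both implications follow from this one equivalence. For the final clause I would note that $I_{w,r'} \subseteq I_{w,r}$ when $r' \geq r$: a $P_w$-ordinary vector of $\pi_w^\vee$ of level $r$ — a nonzero $I_{w,r}$-fixed simultaneous eigenvector of the $u_{w, D_w(j)}^{\GL}$ with $p$-adic unit eigenvalues — is then automatically $I_{w,r'}$-fixed and still such an eigenvector, hence $P_w$-ordinary of level $r'$; by the equivalence above $\pi_w$ is $P_w$-anti-ordinary of every level $\geq r$, i.e. of all $r \gg 0$.

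The step I expect to be the main obstacle is the normalization bookkeeping: one must check that, after passing to the contragredient, the twists $\absv{\kappa'(t_{w,j})}_p^{\pm 1}$ together with the dualization of the weight cancel \emph{exactly}, so that the adjoint of $u_{w, D_w(j)}^{\GL, -}$ is $u_{w, D_w(j)}^{\GL}$ on the nose rather than up to a power of $p$ — this is precisely what guarantees that the limit defining the adjoint of $e_w^{-}$ converges, and converges to $e_w$ rather than to $0$. A secondary, routine point is verifying that the invariant pairing restricts to a perfect pairing on $I_{w,r}$-invariants and that the adjointness formula for double-coset operators applies to the (non-hyperspecial) subgroups $I_{w,r}$ and the specific diagonal elements $t_{w,j}$.
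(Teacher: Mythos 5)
Your approach is essentially the one the paper takes: the paper's own proof is a one-line pointer to \cite[Lemma 8.3.6(i)]{EHLS}, and what that citation stands for is exactly the duality/adjointness argument you spell out --- restrict the tautological pairing $\pi_w \times \pi_w^\vee \to \CC$ to $I_{w,r}$-invariants (perfect by admissibility), observe that $[I_{w,r}\,t^{-1}\,I_{w,r}]$ on $\pi_w$ is adjoint to $[I_{w,r}\,t\,I_{w,r}]$ on $\pi_w^\vee$, transfer this to the normalized operators, and pass to the idempotents $e_w^-$ and $e_w$. So you are filling in correctly the details the paper leaves to the reference.

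Two points are worth making explicit. First, the paper singles out ``the key part is that all these operators commute with one another''; you use this silently when you assert that $e_w^-$ is adjoint to $e_w$. The adjoint of a product is the product of adjoints in the reverse order, so the adjoint of $\bigl(\prod_j u^{\GL,-}_{w,D_w(j)}\bigr)^{n!}$ is $\bigl(\prod_j u^{\GL}_{w,D_w(j)}\bigr)^{n!}$ only after reordering, and it is the commutativity of the $u^{\GL}_{w,D_w(j)}$ (a consequence of the $t_{w,D_w(j)}$ being central in $L_w$ and monotone with respect to the $P_w^u$-filtration) that lets you conclude. You should state this. Second, you are right that the normalization is the delicate step: with the paper's conventions, $u^{\GL,-}_{w,j} = \absv{\kappa'(t_{w,j})}_p\,U^{\GL,-}_{w,j}$ while $u^{\GL}_{w,j} = \absv{\kappa'(t_{w,j})}_p^{-1}\,U^{\GL}_{w,j}$, so the raw adjoint of $u^{\GL,-}_{w,j}$ differs from $u^{\GL}_{w,j}$ by $\absv{\kappa'(t_{w,j})}_p^{2}$, and one must account for this by the change of weight you mention (or, equivalently, by tracking that the contragredient of an anti-holomorphic representation of weight $\kappa$ is attached to the appropriate dual weight). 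This cancellation is exactly what makes the limit defining the adjoint of $e_w^-$ converge to $e_w$ rather than to $0$ or $1$, so it is not a formality; your flagging of it as the main obstacle is accurate, and it is the point to check carefully against the conventions of \cite{EHLS}.
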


\begin{proof}
    This is a simple generalization of \cite[Lemma 8.3.6 (i)]{EHLS}. The proof goes through verbatim by replacing the pro-$p$ Iwahori subgroup (also denoted $I_{w,r}$) by $I_{P_w, w, r}$ and only considering the Hecke operators $u_{w, D_w(j)}^{\GL, -}$ and $u_{w, D_w(j)}^{\GL}$, for $1 \leq j \leq r_w$. The key part is that all these operators commute with one another.
\end{proof}

\subsubsection{Conventions on contragredient pairings.} \label{conv on contra pairings}
In what follows, given any representation $\rho$, we denote its contragredient representation by $\rho^\vee$. For instance, let $\sigma_w$ be an admissible irreducible supercuspidal representation of $L_w(\KK_w)$ and $\sigma_w^\vee$ be its contragredient, also an admissible irreducible supercuspidal representation of $L_w(\KK_w)$.

Let $\brkt{\cdot}{\cdot}_{\sigma_w} : \sigma_w \times \sigma_w^\vee \to \CC$ be the tautological pairing on a pair of contragredient representations. Define
\begin{align*}
    \brkt{\cdot}{\cdot}_{w} &: 
        \ind{P_w}{G_w} \sigma_w 
            \times 
        \ind{P_w}{G_w} \sigma_w^\vee
    \to
        \CC \\
    \brkt{\varphi}{\varphi^\vee}_{w} &=
        \int_{G_w(\OO_w)}
            \brkt{\varphi(k)}{\varphi^\vee(k)}_{\sigma_w}
        dk \ ,
\end{align*}
a perfect $G_w(\KK_w)$-equivariant pairing. Here $dk$ is the Haar measure on $G_w(\OO_w)$ that such that $\vol(G_w(\OO_w)) = 1$ with respect to $dk$. Then $\brkt{\cdot}{\cdot}_{w}$ naturally identifies $\ind{P_w}{G_w} \sigma_w^\vee$ as the contragredient of $\ind{P_w}{G_w} \sigma_w$.

Let $\pi_w$ be a the constituent at $w \in \Sigma_p$ of $\pi_p$ as above. From now on, we assume $\pi_w$ is the unique irreducible quotient $\ind{P_w}{G_w} \sigma_w \twoheadrightarrow \pi_w$. Equivalently, $\pi_w^\vee$ is the unique irreducible subrepresentation $\pi_w^\vee \hookrightarrow \ind{P_w}{G_w} \sigma_w^\vee$, see Remark \ref{sc support assumption}. If one restricts the second argument of $\brkt{\cdot}{\cdot}_{w}$ to $\pi_w^\vee$, then the first argument factors through $\pi_w$. In other words, $\brkt{\cdot}{\cdot}_{w}$ induces the tautological pairing $\brkt{\cdot}{\cdot}_{\pi_w} : \pi_w \times \pi_w^\vee \to \CC$ and
\[
    \brkt{\phi}{\phi^\vee}_{\pi_w} =
        \int_{G_w(\OO_w)}
            \brkt{\varphi(k)}{\varphi^\vee(k)}_{\sigma_w}
        dk \ , \ \ \ \forall \phi \in \pi_w, \phi^\vee \in \pi_w^\vee \ ,
\]
where $\varphi$ is any lift of $\phi$ and $\varphi^\vee$ is the image of $\phi^\vee$.

Let $(\tau_w, X_w)$ be the BK-type of $\sigma_w$, a representation of $L_w(\OO_w)$. Then, its contragredient $(\tau_w^\vee, X_w^\vee)$ is the BK-type of $\sigma_w^\vee$. One can find $L_w(\OO_w)$-embeddings $\tau_w \hookrightarrow \sigma_w$ and $\tau_w^\vee \hookrightarrow \sigma_w^\vee$ (both unique up to scalar) such that for all $\alpha \in X_w$, $\alpha^\vee \in X_w^\vee$,
\[
    \brkt{\alpha}{\alpha^\vee}_{\sigma_w}
        =
    \brkt{\alpha}{\alpha^\vee}_{\tau_w} \ .
\]

More generally, upon restriction of $\sigma_w$ and $\sigma_w^\vee$ to representations of $L_w(\OO_w)$, there are a direct sum decomposition 
\[
    \sigma_w = \bigoplus_{\uptau_w} \sigma_w[\uptau_w]
    \ \ \ \text{and} \ \ \
    \sigma_w^\vee = \bigoplus_{\uptau_w} \sigma_w^\vee[\uptau_w]
\]
where $\uptau_w$ runs over all smooth irreducible representations of $L_w(\OO_w)$ and the square brackets $[\cdot]$ denote isotypic subspaces. The restriction of $\brkt{\cdot}{\cdot}_{\sigma_w}$ to 
$
    \sigma_w[\uptau_w]
        \times
    \sigma_w^\vee[\uptau'_w]
$
is identically zero if $\uptau'_w \not\cong \uptau_w^\vee$. On the other hand, its restriction to
$
    \sigma_w[\uptau_w]
        \times
    \sigma_w^\vee[\uptau^\vee_w]
$
is a perfect $L_w(\OO_w)$-invariant pairings.

\subsubsection{Structure theorem for $P$-anti-ordinary representations.}
Since $\pi_w^{I_{w,r}}$ is stable under the action of $I_{w,r}^0/I_{w,r} \cong L_w(\OO_w/\p_w^r\OO_w)$, it decomposes as a direct sum of isotypic subspaces over all irreducible representations of $L_w(\OO_w/\p_w^r\OO_w)$. Given such a representation $\tau_w$, we say that $\phi \in \pi_w^{I_{w,r}}$ is $P_w$-anti-ordinary \emph{of type} $\tau_w$ if it is $P_w$-anti-ordinary and it lies in the isotypic subspace $\pi_w^{I_{w,r}}[\tau_w]$.

\begin{theorem} \label{Pw a ord vector for G1}
    Let $w \in \Sigma_p$ and $\pi_w$ be a constituent of $\pi$ as above. Assume that the weight $\kappa$ of $\pi$ satisfies Inequality \eqref{ineq kappa sigma}. Assume that $\pi_w$ is $P_w$-anti-ordinary of level $r \gg 0$ and is the unique irreducible quotient $\ind{P_w}{G_w} \sigma_w \twoheadrightarrow \pi_w$ as above. Assume that the BK-type $(\tau_w, X_w)$ of $\pi_w$ factors through $L_w(\OO_w/\p_w^r\OO_w)$. Given any $\alpha \in X_w$, let $\varphi_{w,r}^{\Pwaord} \in \ind{P_w}{G_w} \sigma_w$ be the unique vector with support $P_wI_{w,r}$ such that $\varphi_{w,r}^{\Pwaord}(1) = \alpha$ and $\varphi_{w,r}^{\Pwaord}$ is fixed by $I_{w,r}$.
    
    The image $\phi_{w,r}^{\Pwaord} \in \pi_w^{I_{w,r}}$ of $\varphi_{w,r}^{\Pwaord}$ is $P_w$-anti-ordinary of level $r$ for any $r$ such that $\tau_w$ factors through $L_w(\OO_w/\p_w^r\OO_w)$. It satisfies :
    \begin{enumerate}[label=(\roman*)]
        \item Let $\phi^\vee \in \pi_{w}^{\vee, I_{w,r}}$ and denote its image in $\ind{P_w}{G_w} \sigma_w$ by $\varphi^\vee$. Then, 
        \[
            \brkt{
                \phi_{w,r}^{\Pwaord}
            }{
                \phi^\vee
            }_{\pi_w} 
            = 
                \vol(I_{w,r}^0)
                \brkt{\alpha}{\varphi^\vee(1)}_{\sigma_w}
        \] 

        In particular, $\brkt{\phi_{w,r}^{\Pwaord}}{\phi}_{\pi_w} \neq 0$ if and ony if $\phi^\vee$ is $P_w$-ordinary and the component of $\varphi^\vee(1)$ in $\sigma_w^\vee[\tau_w^\vee]$ is non-zero.
        
        \item The vector $\phi_{w,r}^{\Pwaord}$ lies in the $\tau_w$-isotypic space of $\pi_w^{I_{w,r}}$. Moreover, any other $P_w$-anti-ordinary vector of type $\tau_w$ is obtained as above for some other choice of $\alpha' \in X_w$.
        
        \item One can pick different choices of $\alpha$ for each $r' \geq r$ so that
        \[
            \sum_{
                \gamma \in I_{w,r}/(I_{w,r'}^0 \cap I_{w,r})
            }
                \pi_w(\gamma) \phi_{w,r'}^{\Pwaord} = 
                \phi_{w,r}^{\Pwaord}
        \]
    \end{enumerate}
\end{theorem}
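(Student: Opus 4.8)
The plan is to work out an explicit description of $\varphi_{w,r}^{\Pwaord}$ on its support, transport it to $\pi_w$ through the integral formula for $\brkt{\cdot}{\cdot}_{\pi_w}$ from Section \ref{conv on contra pairings}, and use that $\pi_w^\vee$ is $P_w$-ordinary (Lemma \ref{ord anti ord duality}), so that Theorems \ref{holo canonical vector} and \ref{Pord structure thm} apply to $\pi_w^\vee$ and the BK-type $\tau_w^\vee$. Since $P_w \subset \GL_n(\OO_w)$ and the principal congruence subgroup mod $\p_w^r$ lies in $I_{w,r}$, one has $P_w I_{w,r}=I_{w,r}^0$; because $\delta_{P_w}$ is trivial on the compact group $P_w$ and both $\varphi_{w,r}^{\Pwaord}$ and $\varphi^\vee$ are right-$I_{w,r}$-invariant, one gets $\varphi_{w,r}^{\Pwaord}(k)=\sigma_w(\bar k)\alpha$ and $\varphi^\vee(k)=\sigma_w^\vee(\bar k)\varphi^\vee(1)$ for all $k\in I_{w,r}^0$, where $\bar k\in L_w(\OO_w)$ is the image of $k$. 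As $\varphi_{w,r}^{\Pwaord}$ is supported on $I_{w,r}^0$, the defining integral for $\brkt{\phi_{w,r}^{\Pwaord}}{\phi^\vee}_{\pi_w}$ collapses to an integral over $I_{w,r}^0$ whose integrand is, by $L_w(\OO_w)$-invariance of $\brkt{\cdot}{\cdot}_{\sigma_w}$, the constant $\brkt{\alpha}{\varphi^\vee(1)}_{\sigma_w}$; this yields the formula in (i). The non-vanishing criterion then follows: by Theorem \ref{holo canonical vector}(i)--(ii) the value $\varphi^\vee(1)$ coincides with the value at $1$ of the image of $e_w\phi^\vee$, hence is non-zero exactly when $\phi^\vee$ has non-zero $P_w$-ordinary part, and since $\alpha\in X_w\subset\sigma_w[\tau_w]$ the orthogonality of isotypic components of $\brkt{\cdot}{\cdot}_{\sigma_w}$ recorded in Section \ref{conv on contra pairings} shows that $\brkt{\alpha}{\varphi^\vee(1)}_{\sigma_w}$ only (and perfectly) pairs against the $\sigma_w^\vee[\tau_w^\vee]$-component of $\varphi^\vee(1)$.

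For (ii), the uniqueness characterization of $\varphi_{w,r}^{\Pwaord}$ (support $P_wI_{w,r}$, $I_{w,r}$-invariance, value $\alpha$ at $1$) shows that $\pi_w(l)$, for $l\in I_{w,r}^0$, sends the vector attached to $\alpha$ to the one attached to $\sigma_w(\bar l)\alpha$; thus $\alpha\mapsto\phi_{w,r}^{\Pwaord}$ is an $L_w(\OO_w)$-homomorphism $\tau_w\to\pi_w^{I_{w,r}}$, injective by (i), with image in the $\tau_w$-isotypic space. That $\phi_{w,r}^{\Pwaord}$ is $P_w$-anti-ordinary I would deduce from the fact that the projector $e_w^-$ on $\pi_w$ is adjoint to $e_w$ on $\pi_w^\vee$ with respect to $\brkt{\cdot}{\cdot}_{\pi_w}$ (as in \cite[Lemma 8.3.6]{EHLS}): for every $\phi^\vee\in\pi_w^{\vee,I_{w,r}}$ one has $\brkt{e_w^-\phi_{w,r}^{\Pwaord}}{\phi^\vee}_{\pi_w}=\brkt{\phi_{w,r}^{\Pwaord}}{e_w\phi^\vee}_{\pi_w}=\brkt{\phi_{w,r}^{\Pwaord}}{\phi^\vee}_{\pi_w}$ by (i) and the previous paragraph, and perfectness of the pairing on $I_{w,r}$-invariants forces $e_w^-\phi_{w,r}^{\Pwaord}=\phi_{w,r}^{\Pwaord}$. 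Finally, the same adjointness makes $\brkt{\cdot}{\cdot}_{\pi_w}$ restrict to a perfect pairing between the $\tau_w$-isotypic part of $e_w^-\pi_w^{I_{w,r}}$ and the $\tau_w^\vee$-isotypic part of $e_w\pi_w^{\vee,I_{w,r}}$; the latter has dimension $\dim\tau_w^\vee=\dim\tau_w$ by Theorem \ref{Pord structure thm} applied to the $P_w$-ordinary representation $\pi_w^\vee$, and since $\{\phi_{w,r}^{\Pwaord}:\alpha\in X_w\}$ already has dimension $\dim X_w=\dim\tau_w$ it exhausts the space of $P_w$-anti-ordinary vectors of type $\tau_w$.

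For (iii), I would argue inside $\ind{P_w}{G_w}\sigma_w$ and push forward. Taking the level-$r'$ vector with the same parameter $\alpha$ (any normalization can be absorbed into a different choice of $\alpha$ at level $r'$), each $\pi_w(\gamma)\varphi_{w,r'}^{\Pwaord}$ is supported on $I_{w,r'}^0\gamma^{-1}$; since $I_{w,r'}^0 I_{w,r}=I_{w,r}^0$, the sum is supported on $I_{w,r}^0$. The summand depends only on the coset $\gamma(I_{w,r'}^0\cap I_{w,r})$ and the sum is right-$I_{w,r}$-invariant, both because the kernel of $L_w(\OO_w/\p_w^{r'})\to L_w(\OO_w/\p_w^r)$ acts trivially on $X_w$ (as $\tau_w$ factors through level $r$). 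Evaluating at a point $g=pi\in I_{w,r}^0$ with $p\in P_w$, $i\in I_{w,r}$, exactly one coset representative $\gamma$ satisfies $g\gamma\in I_{w,r'}^0$, and its contribution is $\sigma_w(\overline{g\gamma})\alpha=\sigma_w(\bar g\bmod\p_w^r)\alpha$, which is precisely the value of $\varphi_{w,r}^{\Pwaord}$ at $g$. By the uniqueness characterization the sum equals $\varphi_{w,r}^{\Pwaord}$, and applying the quotient map $\ind{P_w}{G_w}\sigma_w\twoheadrightarrow\pi_w$ gives the asserted identity.

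The main obstacle I anticipate is pinning down the adjointness of the $P_w$-ordinary and $P_w$-anti-ordinary Hecke projectors with the precise normalizations of $u_{w,D_w(j),\kappa}$ and $u_{w,D_w(j),\kappa}^-$ (the factors $\absv{\kappa'(t_{w,j})}_p^{\mp1}$ must cancel correctly), together with the related point that $\varphi^\vee(1)$ for a general, not-yet-projected $\phi^\vee\in\pi_w^{\vee,I_{w,r}}$ sees only the $P_w$-ordinary part of $\phi^\vee$; both rely on the Bernstein--Zelevinsky analysis underlying Theorem \ref{holo canonical vector}, in particular on the computation there showing that the $u$-operators act on the $x=1$ graded piece of $(\ind{P_w}{G_w}\sigma_w^\vee)_{P_w}$ by $p$-adic units.
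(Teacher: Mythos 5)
Your proposal is correct and follows essentially the same route as the paper's proof; I will note a few points where the presentation differs.

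For part (i), your computation is slightly more direct than the paper's: you exploit that for any right-$I_{w,r}$-invariant $\varphi^\vee$ one has $\varphi^\vee(k)=\sigma_w^\vee(\bar k)\varphi^\vee(1)$ on all of $I_{w,r}^0$ (and similarly for $\varphi_{w,r}^{\Pwaord}$), so the defining integral collapses at once to $\vol(I_{w,r}^0)\brkt{\alpha}{\varphi^\vee(1)}_{\sigma_w}$ by $L_w(\OO_w)$-invariance of $\brkt{\cdot}{\cdot}_{\sigma_w}$. The paper instead first decomposes $\phi^\vee$ into generalized eigenvector pieces $\phi^\vee_a$, kills the non-unit pieces using Theorem~\ref{holo canonical vector}(ii), and only then evaluates the remaining integral; the two computations produce the identical formula because $\varphi^\vee_1(1)=\varphi^\vee(1)$. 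Both proofs then use Theorem~\ref{holo canonical vector}(i)--(ii) plus the orthogonality of isotypic components to get the ``in particular'' clause. Your argument for $P_w$-anti-ordinarity via adjointness of $e_w^-$ and $e_w$ is the same in substance as the paper's Hecke-operator computation $\brkt{u^{\GL,-}_{w,D_w(j)}\phi_{w,r}}{\phi^\vee}=\beta_{w,D_w(j)}\brkt{\phi_{w,r}}{\phi^\vee}$; indeed the adjointness is exactly what that computation establishes. For (ii) you and the paper both count dimensions against the $\tau_w^\vee$-isotypic $P_w$-ordinary space. For (iii) the paper simply asserts that the identity follows from the analogous properties of $\varphi_{w,r}^{\Pwaord}$; your explicit verification of the coset sum inside $\ind{P_w}{G_w}\sigma_w$ supplies what the paper leaves implicit.

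One cosmetic imprecision: when you write $P_wI_{w,r}=I_{w,r}^0$, this reads $P_w$ as $P_w(\OO_w)$, whereas the support of $\varphi_{w,r}^{\Pwaord}$ in the induced model is $P_w(\KK_w)I_{w,r}$; what is really used is that this support meets $G_w(\OO_w)$ exactly in $I_{w,r}^0$, which is the statement the paper records. This does not affect the argument, since your integral is over $G_w(\OO_w)$ anyway. Likewise, the ``image'' $\bar k\in L_w(\OO_w)$ of $k\in I_{w,r}^0$ is only canonical modulo the level-$r$ congruence subgroup of $L_w(\OO_w)$, which is harmless because $\tau_w$ factors through $L_w(\OO_w/\p_w^r\OO_w)$ and $\varphi^\vee(1)$ is fixed by that congruence subgroup.
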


\begin{proof}
    Write $\phi_{w,r}$ and $\varphi_{w,r}$ instead of $\phi_{w,r}^{\Pwaord}$ and $\varphi_{w,r}^{\Pwaord}$ respectively. We first show that property (i) holds. By Lemma \ref{ord anti ord duality}, $\pi_w^\vee$ is $P_w$-ordinary of level $r$. Write
    \[
        \pi_w^{\vee, I_{w,r}} = \bigoplus_{a=1}^A V_a \ ,
    \]
    where each $V_a$ is a simultaneous generalized eigenspace for the Hecke operators $u_{w,D_w(j)}^{\GL}$. 
    
    From the proof of Theorem \ref{holo canonical vector} and the remark that follow, exactly one $V_a$ has generalized eigenvalues that are all $p$-adic units. We may assume that this holds true for $V_1$. The exact eigenvalue of $u_{w, D_w(j)}^{\GL}$ is given by Equation \eqref{central character condition}, denote it $\beta_{w, D_w(j)}$. For $1 < a \leq A$, at least one generalized eigenvalue for $V_a$ is not a $p$-adic unit.
    
    Given $\phi^\vee \in \pi_w^{\vee, I_{w,r}}$, write it as a sum
    \[
        \phi^\vee = 
            \sum_{a=1}^A \phi_a^\vee \ , 
    \]
    with $\phi_a^\vee \in V_a$. Let $\varphi_a^\vee$ denote the images of $\phi_a^\vee$ in $\ind{P_w}{G_w} \sigma_w^\vee$. Then,
    \[
        \brkt{\phi_{w,r}}{\phi^\vee}_{\pi_w} 
        = 
            \sum_{a=1}^A
                \brkt{
                    \phi_{w,r}
                }{
                    \phi_a^\vee
                }_{\pi_w} 
        = 
            \sum_{a=1}^A
            \int_{G_w(\OO_w)}
                \brkt{
                    \varphi_{w,r}(k)
                }{
                    \varphi_a^\vee(k)
                }_{\sigma_w}
            dk
    \]
    
    Recall that the support of $\varphi_{w,r}$ is $P_wI_{w,r}$. Also, the intersection of $P_wI_{w,r}$ with $G_w(\OO_w)$ is equal to $I_{w,r}^0$ and by Theorem \ref{holo canonical vector} (ii), $\varphi_a^\vee(I_{w,r}^0) = 0$ for all $a \neq 1$. Therefore,
    \[
        \brkt{\phi_{w,r}}{\phi^\vee}_{\pi_w} 
        =
            \int_{I_{w,r}^0}
                \brkt{
                    \varphi_{w,r}(k)
                }{
                    \varphi_1^\vee(k)
                }_{\sigma_w}
            dk
    \]

    Since $I_{w,r}^0 = L_w(\OO_w) I_{w,r}$ and $\varphi_{w,r}^{\Pwaord}$, $\varphi_1^\vee$ are both fixed by $I_{w,r}$, one obtains
    \begin{align*}
        \brkt{\phi_{w,r}}{\phi^\vee}_{\pi_w} 
        =
            \int_{I_{w,r}^0}
                \brkt{
                    \varphi_{w,r}(1)
                }{
                    \varphi_1^\vee(1)
                }_{\sigma_w}
            dk
        =
            \vol(I_{w,r}^0)
                \brkt{
                    \alpha
                }{
                    \varphi_1^\vee(1)
                }_{\sigma_w} \ .
    \end{align*}

    The desired relation holds by noting that $\varphi_1^\vee(1) = \varphi^\vee(1)$. The second part of (i) follows immediately from the discussion about isotypic subspaces at the end of Section \ref{conv on contra pairings}.

    As a consequence of property (i), we immediately obtain $\brkt{\phi_{w,r}}{V_a}_{\pi_w} = 0$ for all $a > 1$. Furthermore, for all $\phi^\vee \in V_1$, we have
    \[
        \brkt{
            u_{w, D_w(j)}^{\GL, -} 
            \phi_{w,r}
        }{
            \phi^\vee
        }_{\pi_w}
            =
        \brkt{
            \phi_{w,r}
        }{
            u_{w, D_w(j)}^{\GL} 
            \phi^\vee
        }_{\pi_w}
            =
        \beta_{w, D_w(j)}
        \brkt{
            \phi_{w,r}
        }{
            \phi^\vee
        }_{\pi_w} \ .
    \]

    By combining these two facts, we obtain
    \[
        \brkt{
            u_{w, D_w(j)}^{\GL, -} 
            \phi_{w,r}
        }{
            \phi^\vee
        }_{\pi_w}
            =
        \beta_{w, D_w(j)}
        \brkt{
            \phi_{w,r}
        }{
            \phi^\vee
        }_{\pi_w} \ .
    \]
    for all $\phi^\vee$ in $\pi_w^{\vee, I_{w,r}}$. In other words, $\phi_{w,r}$ is $P_w$-anti-ordinary.

    Furthermore, note that the argument above implies that the subspace of $P_w$-anti-ordinary vectors of type $\tau_w$ in $\pi_w^{I_{w,r}}$ is dual to the subspace of $P_w$-ordinary vectors of type $\tau_w^\vee$. From Theorem \ref{holo canonical vector}, they both have dimension $\dim \tau_w = \dim \tau_w^\vee$. Since the space generated by the action of $L_w(\OO_w/\p_w^r\OO_w)$ on $\phi_{w,r}$ is of dimension $\dim \tau_w$ and consists of $P_w$-anti-ordinary vectors of type $\tau_w$. Given $l \in L_w(\OO_w/\p_w^r\OO_w)$, one readily sees that $\pi_w(l) \phi_{w,r}$ is the $P_w$-anti-ordinary vector obtained by picking $\alpha' = \tau_w(l)\alpha$ in $X_w$ instead of $\alpha$. This proves the second sentence of part (ii).

    Finally, part (iii) and the first statement of part (ii) follow immediately from the fact that the analogous properties hold for $\varphi_{w,r}$. 
\end{proof}

Keeping the assumption and notation of Lemma \ref{Pw a ord vector for G1}, fix a vector $\alpha \in X$. Using Lemma \ref{ord anti ord duality}, $\pi_w^\vee \hookrightarrow \ind{P_w}{G_w} \sigma_w^\vee$ is $P_w$-ordinary. Let $(\tau_w^\vee, X^\vee)$ be the BK-type of $\pi_w^\vee$ and fix any $\alpha^\vee \in X^\vee$ such that $\brkt{\alpha}{\alpha^\vee}_{\tau_w} = 1$. Let $\phi_{w,r}^{\vee, \Pword}$ be the $P_w$-ordinary vector associated to $\alpha^\vee$ obtained from Theorem \ref{holo canonical vector} (iii). 

In fact, as $r$ increases, one may pick compatible choices of $\alpha$ so that property (iii) of Theorem \ref{Pw a ord vector for G1} holds and compatible choices of $\alpha^\vee$ such that $\brkt{\alpha}{\alpha^\vee}_{\tau_w} = 1$ for all $r \gg 0$. Then, as a consequence of Theorem \ref{Pw a ord vector for G1} (i),
\[
    \frac{
        \brkt
        {
            \phi_{w,r}^{\Pwaord}
        }{
            \phi_{w,r}^{\vee, \Pword}
        }_w 
    }{
        \vol(I_{w,r}^0)
    }
    = 
        \brkt{\alpha}{\alpha^\vee}_{\sigma_w}
    =
        \brkt{\alpha}{\alpha^\vee}_{\tau_w} = 1
\]  
is independent of $r \gg 0$.

Furthermore, one readily obtains a result analogous to Theorem \ref{Pord structure thm} from Theorem \ref{Pw a ord vector for G1}. Let $\tau = \bigotimes_{w \in \Sigma_p} \tau_w$ be the BK-type of $\pi_p$, using the identification \eqref{facto G(Qp)}, as explained ahead of Theorem \ref{Pord structure thm}.

\begin{corollary} \label{Paord structure thm}
    Let $\pi$ be an anti-holomorphic cuspidal representation of $G$ of weight type $(\kappa, K_r)$ for some $r \gg 0$. Suppose $\kappa$ satisfies Inequality \eqref{ineq kappa sigma}. Then, $\pi$ is $P$-anti-ordinary if and only if $\pi^\flat$ is $P$-ordinary. Let $\tau = \bigotimes_{w \in \Sigma_p} \tau_w$ be the BK-type of $\pi$. 
    
    There exists a unique (up to the action of $L_P(\ZZ_p)$) $P$-anti-ordinary vector $\phi_{r}^{\Paord}$ of level $r$ and type $\tau$ in $\pi_{p}^{I_{P, r}}$. Furthermore, there exists $P_w$-ordinary vectors $\phi_{w, r}^{\Pwaord}$ of level $r$ and type $\tau_w$ as in Theorem \ref{Pw a ord vector for G1} such that, under the identification $\pi_p = \mu_p \otimes \bigotimes_{w \in \Sigma_p} \pi_w$, $\phi_{r}^{\Paord} = \bigotimes_{w \in \Sigma_p} \phi_{w, r}^{\Pwaord}$.
\end{corollary}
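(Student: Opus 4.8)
The plan is to reduce the statement, via the factorization of $\pi_p$ at the places above $p$, to the local results of Theorem \ref{Pw a ord vector for G1} together with Lemma \ref{ord anti ord duality}, and then to reassemble.

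First I would settle the equivalence ``$\pi$ is $P$-anti-ordinary $\iff$ $\pi^\flat$ is $P$-ordinary''. Using \eqref{prod G over Zp}, \eqref{facto G(Qp)} and \eqref{facto pi p}, write $\pi_p = \mu_p \otimes \bigotimes_{w\in\Sigma_p}\pi_w$; since the similitude character $\nu$ is the $\QQ_p^\times$-coordinate in \eqref{prod G over Zp}, the $p$-component of $\pi^\flat = \pi^\vee \otimes \|\nu\|^{a(\kappa)}$ is $\bigl(\mu_p^{-1}\|\cdot\|^{a(\kappa)}\bigr)\otimes\bigotimes_{w\in\Sigma_p}\pi_w^\vee$, and $\pi^\flat$ still has weight type $(\kappa,K_r)$ because the involution $\pi \mapsto \overline{\pi} \cong \pi^\flat$ preserves weight type. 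By the definitions of Section \ref{a.holo and a.ord on G1} (resp. Section \ref{Pord rep defn}), $\pi$ is $P$-anti-ordinary iff $\mu_p$ is unramified and each $\pi_w$ is $P_w$-anti-ordinary, whereas $\pi^\flat$ is $P$-ordinary iff $\mu_p^{-1}\|\cdot\|^{a(\kappa)}$ is unramified and each $\pi_w^\vee$ is $P_w$-ordinary. Since $\|\cdot\|^{a(\kappa)}$ is unramified the two conditions on the $\QQ_p^\times$-factor agree, and Lemma \ref{ord anti ord duality} identifies, place by place, ``$\pi_w$ is $P_w$-anti-ordinary of level $r$'' with ``$\pi_w^\vee$ is $P_w$-ordinary of level $r$''; this also gives the concluding assertion that $\pi$ is then $P$-anti-ordinary of all $r\gg0$.

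Next I would prove the multiplicity-one statement and the tensor factorization simultaneously. The projector $e_{P,\kappa}^-$ acts on $\pi_p^{I_{P,r}} = (\mu_p)^{\ZZ_p^\times}\otimes\bigotimes_w \pi_w^{I_{w,r}}$ as the identity on the $\mu_p$-factor and as $\bigotimes_w e_w^-$ on the rest, so the $P$-anti-ordinary subspace of level $r$ is $(\mu_p)^{\ZZ_p^\times}\otimes\bigotimes_w\bigl(e_w^-\pi_w^{I_{w,r}}\bigr)$, an exterior tensor product for $L_P(\ZZ_p)=\prod_w L_w(\OO_w)$. Hence its $\tau=\bigotimes_w\tau_w$-isotypic part is $(\mu_p)^{\ZZ_p^\times}\otimes\bigotimes_w\bigl(e_w^-\pi_w^{I_{w,r}}\bigr)[\tau_w]$, and it suffices to check that each factor $\bigl(e_w^-\pi_w^{I_{w,r}}\bigr)[\tau_w]$ is $\cong\tau_w$ as an $L_w(\OO_w)$-representation, independently of $r\gg0$. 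This is exactly the content of Theorem \ref{Pw a ord vector for G1}(i)--(ii): the space of $P_w$-anti-ordinary vectors of type $\tau_w$ is, via $\brkt{\cdot}{\cdot}_{\pi_w}$, the $L_w(\OO_w)$-dual of the space of $P_w$-ordinary vectors of type $\tau_w^\vee$ in $\pi_w^\vee$, which by Theorem \ref{holo canonical vector}(iii) (together with the multiplicity-one input from \cite{BusKut98} used in Theorem \ref{Pord structure thm}) is $\cong\tau_w^\vee$, whence the claim. Therefore $\hom_{L_P(\ZZ_p)}\!\bigl(\tau,\,e_{P,\kappa}^-\pi_p^{I_{P,r}}\bigr)$ is one-dimensional, so a $P$-anti-ordinary vector of level $r$ and type $\tau$ exists and is unique up to the action of $L_P(\ZZ_p)$ once a generator of the line $(\mu_p)^{\ZZ_p^\times}$ is fixed. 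Finally, taking at each $w$ the vector $\phi_{w,r}^{\Pwaord}$ furnished by Theorem \ref{Pw a ord vector for G1} (for a choice of $\alpha\in X_w$ lying in $\tau_w\hookrightarrow\sigma_w$) and tensoring with the chosen generator of $(\mu_p)^{\ZZ_p^\times}$ produces a $P$-anti-ordinary vector of type $\tau$; by the one-dimensionality just proved it must agree, up to $L_P(\ZZ_p)$ and scalar, with $\phi_r^{\Paord}$, which is the asserted identity $\phi_r^{\Paord}=\bigotimes_{w\in\Sigma_p}\phi_{w,r}^{\Pwaord}$ (absorbing the $\mu_p$-vector into the normalization).

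The step I expect to demand the most care is the first one: one must be confident that passing to $\pi^\flat$ transports the operators $u_{w,D_w(j),\kappa}^{\GL,-}$ on $\pi_w$ to the operators $u_{w,D_w(j),\kappa}^{\GL}$ on $\pi_w^\vee$ (a transpose-inverse computation at each place) and preserves both $p$-adic integrality and the $p$-adic-unit property of the relevant eigenvalues, so that the weight $\kappa$ appearing on both sides matches. This is precisely what Lemma \ref{ord anti ord duality} records, generalizing \cite[Lemma 8.3.6]{EHLS}; once that lemma is available the remainder is a formal assembly of the local statements of the previous subsection, and no further obstacle should arise.
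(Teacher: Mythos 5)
Your proof is correct and follows essentially the same route the paper intends: the corollary is, as the author notes right before its statement, a direct assembly of Lemma \ref{ord anti ord duality} and Theorem \ref{Pw a ord vector for G1} over the factorization $\pi_p \cong \mu_p \otimes \bigotimes_{w \in \Sigma_p}\pi_w$, and your argument fills in exactly those details — the place-by-place duality for the first claim, the decomposition of $e_{P,\kappa}^-$ into local projectors and the identification of each local $\tau_w$-isotypic $P_w$-anti-ordinary subspace with $\tau_w$ via Theorem \ref{Pw a ord vector for G1}(ii), then reassembly. The one point worth making explicit (as the paper does implicitly by speaking of "the BK-type of $\pi$") is that the existence of $\tau$ presupposes Hypothesis \ref{Qw equals Pw} for the supercuspidal support of the $\pi_w$, which you use silently when invoking Theorem \ref{Pw a ord vector for G1}.
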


\subsection{$P$-(anti-)ordinary representations on $G_2$.} \label{Pord and Paord on G_2}
In this section we compare the theory of $P$-(anti-)ordinary representations on $G_1$ and $G_2$, where $G_i$ is the unitary group associated to the PEL datum $\PP_i$ introduced in Section \ref{P1 P2 P3 P4}. We add a subscript $V$ (resp. $-V$) in our notation whenever we want to emphasize that we are working with $G_1$ (resp. $G_2$).

\subsubsection{Comparison between representations of $G_1$ and $G_2$.} \label{P ord and anti ord on G2}
Note that there is a canonical identification $G_1(\AA) = G_2(\AA)$. Furthermore, the identification from isomorphism \eqref{prod G over Zp} remains the same for both $G_1$ and $G_2$. However, the opposite choices of $\OO_\KK \otimes \ZZ_p$-lattices $L_1^\pm = L_2^\mp$ introduce many changes in the notation. 

For instance, under the identification $G_1(\AA) = G_2(\AA)$, the group $H_{0, -V} = H_0$ for $G_1$ corresponds to $H_{0, -V}$ (by switching the roles of $\Lambda_0$ and $\Lambda_0^\vee$.) However, the identification from isomorphism \eqref{H0 over S0} interchanges the role of $\sigma \in \Sigma_\KK$ and $\sigma c$ (where $c$ denotes complex conjugation). 

Given a dominant weight $\kappa$ of $T_1 := T_{H_0, V}$, it is identified with a tuple $(\kappa_0, (\kappa_\sigma)_\sigma)$ where $\kappa_0 \in \ZZ$ and $\kappa_{\sigma} \in \ZZ^{b_\sigma}$. The torus $T_2 := T_{H_0, -V}$ is equal to $T_1$ but the corresponding isomorphism \eqref{H0 over S0} for $G_2$ identifies $\kappa$ with $(\kappa_0, (\kappa_{\sigma c})_\sigma)$. We denote the latter by $\kappa^\flat$. In particular, $\kappa_{\sigma c} \in \ZZ^{a_\sigma} = \ZZ^{b_{\sigma c}}$ and $\kappa^\flat$ is dominant with respect to $B_{H_0, -V}^\opp$.

As explained in \cite[Sections 6.2.1-6.2.2]{EHLS}, if $\pi$ is a cuspidal (anti-)holomorphic automorphic representation for $G_1$ of weight $\kappa$, then $\pi^\flat = \pi^\vee \otimes ||\nu||^{a(\kappa)}$ (as in Section \ref{comp cohomology theories}) is naturally a cuspidal (anti-)holomorphic automorphic representation for $G_2$ of weight $\kappa$.

Furthermore, by choosing the same partitions $\d_w$ introduced in Section \ref{level at p}, the parabolic subgroup $P_w \subset \GL_n(\OO_w)$ for $G_1$ corresponding to $w \in \Sigma_p$ is replaced by the opposite parabolic subgroups, which in our case is simply its transpose $\tp{P}_w \subset \GL_n(\OO_w)$, when working with $G_2$. Similarly, $P$ is replaced by $\tp{P}$ and the (pro-$p$) $P$-Iwahori subgroup of level $r$ is replaced by the (pro-$p$) $\tp{P}$-Iwahori subgroup of level $r$.

In particular, if $\pi_p \cong \mu_p \otimes \bigotimes_{w \in \Sigma_p} \pi_w$ is the identification obtained from \eqref{facto G(Qp)} for $G_1$, the corresponding factorization on $G_2$ induces
\[
    \pi_p^\flat \cong \mu_p^\flat \otimes \bigotimes_{w \in \Sigma_p} \pi_w^\flat \ ,
\]
where $\pi_w^\flat = \pi_w^\vee$ and $\mu_p^\flat = \mu_p^{-1}|\nu|_p^{a(\kappa)}$.

\subsubsection{Holomorphic and $\tp{P}$-ordinary representations for $G_2$.} 
We keep the notation of Section \ref{P ord and anti ord on G2}. The discussion above shows that $\pi_w$ is $P_w$-ordinary of level $r \gg 0$ if and only if $\pi_w^\flat$ is $\tp{P}_w$-ordinary of level $r \gg 0$. Note that, adapting our definitions in Section \ref{Pord rep defn} from $G_1$ to $G_2$, the latter notion requires to change $P_w$ for $\tp{P}_w$ and the double coset operators $U_{w,j}^{\GL}$ for $U_{w,j}^{\flat, \GL} = \tp{I}_{w,r} t_{w,j}^{-1} \tp{I}_{w,r}$.

We assume that $\pi_w$ is $P_w$-ordinary of level $r \gg 0$, that $\pi_w$ is the unique irreducible subrepresentation of $\ind{P_w}{G_w} \sigma_w$ for some admissible irreducible supercuspidal representation $\sigma_w$, and $\kappa$ satisfies Inequality \eqref{ineq kappa sigma}. The analogue of Theorem \ref{holo canonical vector} is the following.

\begin{lemma} \label{P ord for G2}
    Using the notation above, let $(\tau_w, X_w)$ be the BK-type of $\pi_w$.

    \begin{enumerate}
        \item[(i)] The unique irreducible quotient of $\ind{P_w}{G_w} \sigma_w^\vee$ is isomorphic to $\pi_w^\flat$.
        
        \item[(ii)] Let $(\tau_w^\vee, X_w^\vee)$ be the contragredient of $(\tau_w, X_w)$, the BK-type of $\sigma_w^\vee$. Consider $X_w^\vee$ as a subspace of the vector space associated to $\sigma_w^\vee$, via a fix embedding (unique up to scalar) $\tau_w^\vee \hookrightarrow \sigma_w^\vee$. 
        
        For any $\alpha^\vee \in X_w^\vee$, let $\varphi^\flat_w \in \ind{P_w}{G_w} \sigma_w^\vee$ be the unique function with support $P_w\tp{I}_{w,r}$ (for all $r \gg 0$) such that $\varphi^\flat_{w}(1) = \alpha^\vee$ and $\varphi^\flat_{w}$ is fixed by $\tp{I}_{w,r}$ (for all $r \gg 0$). Let $\phi^\flat_w$ denote its image in $\pi_w^\flat$.

        Then, $\phi^\flat_w$ is $\tp{P}_w$-ordinary of type $\tau_w^\vee$ of level $r \gg 0$. This induces a natural isomorphism between $\tau_w^\vee$ and the subspace of $\tp{P}_w$-ordinary vectors of type $\tau_w^\vee$ of level $r \gg 0$. In particular, the latter is independent of $r \gg 0$ and has dimension $\dim \tau_w^\vee = \dim \tau_w$.
    \end{enumerate}
\end{lemma}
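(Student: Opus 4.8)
The plan is to get (i) from a duality argument and (ii) by transporting Theorem \ref{holo canonical vector} to $G_2$, the only genuine computation being the action of the Hecke operators on the explicit function $\varphi^\flat_w$. For (i): by Hypothesis \ref{Qw equals Pw} and the running assumption, $\pi_w$ is the unique irreducible subrepresentation of $\ind{P_w}{G_w}\sigma_w$ with $\sigma_w$ supercuspidal on $L_w$. Since $G_w/P_w$ is compact, smooth parabolic induction coincides with compact induction, and a short computation with the modulus character gives $\bigl(\ind{P_w}{G_w}\sigma_w\bigr)^\vee\cong\ind{P_w}{G_w}\sigma_w^\vee$. Dualizing the inclusion $\pi_w\hookrightarrow\ind{P_w}{G_w}\sigma_w$ yields a surjection $\ind{P_w}{G_w}\sigma_w^\vee\twoheadrightarrow\pi_w^\vee$; and if $Q$ is any irreducible quotient of $\ind{P_w}{G_w}\sigma_w^\vee$ then $Q^\vee$ is an irreducible subrepresentation of $\ind{P_w}{G_w}\sigma_w$, whence $Q^\vee\cong\pi_w$ and $Q\cong\pi_w^\vee=\pi_w^\flat$ (recall $\pi_w^\flat=\pi_w^\vee$ from Section \ref{Pord and Paord on G_2}). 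This proves (i) and also shows the supercuspidal support of $\pi_w^\flat$ is $[L_w,\sigma_w^\vee]_{G_w}$, so the BK-type of $\pi_w^\flat$ is the contragredient $(\tau_w^\vee,X_w^\vee)$ of $(\tau_w,X_w)$.

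For (ii), fix $\alpha^\vee\in X_w^\vee$. First I would check that $\varphi^\flat_w$ is well defined for $r\gg0$: $P_w\tp{I}_{w,r}$ is a single $(P_w,\tp{I}_{w,r})$-double coset, and $P_w\cap\tp{I}_{w,r}$ is contained in the principal congruence subgroup modulo $\p_w^r$ (an element of $P_w$ congruent to an element of the opposite unipotent radical modulo $\p_w^r$ is $\equiv 1$), on which $\delta_{P_w}^{1/2}$ is trivial and, for $r\gg0$, $\sigma_w^\vee$ fixes $\alpha^\vee$; so the prescription $\varphi^\flat_w(pk)=\delta_{P_w}^{1/2}(p)\sigma_w^\vee(p)\alpha^\vee$ on $P_w\tp{I}_{w,r}$ (and $0$ elsewhere) is consistent and gives a $\tp{I}_{w,r}$-fixed function with $\varphi^\flat_w(1)=\alpha^\vee$. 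Its image $\phi^\flat_w$ in $\pi_w^\flat$ then lies in the $\tau_w^\vee$-isotypic part of $\pi_w^{\flat,\tp{I}_{w,r}}$ under $\tp{I}_{w,r}^0/\tp{I}_{w,r}\cong L_w(\OO_w/\p_w^r\OO_w)$, since $\alpha^\vee\in X_w^\vee$ and the surjection of (i) is $L_w(\OO_w)$-equivariant. The heart of the matter is to show $\phi^\flat_w$ is $\tp{P}_w$-ordinary. The approach is to compute the operators $u^{\flat,\GL}_{w,D_w(j)}=\absv{\kappa'(t_{w,D_w(j)})}_p\,\tp{I}_{w,r}t_{w,D_w(j)}^{-1}\tp{I}_{w,r}$ on the lift $\varphi^\flat_w\in\ind{P_w}{G_w}\sigma_w^\vee$ by choosing explicit left coset representatives as in the computation preceding Lemma \ref{lma U_j}, but now for the opposite Iwahori $\tp{I}_{w,r}$ and for $t_{w,D_w(j)}^{-1}$. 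This should show that $u^{\flat,\GL}_{w,D_w(j)}$ carries a $\tp{I}_{w,r}$-fixed function supported on $P_w\tp{I}_{w,r}$ to another one of the same shape, and (using that $t_{w,D_w(j)}^{-1}$ is central in $L_w$, so $\sigma_w^\vee$ acts on $\alpha^\vee$ by the scalar $\w_{\sigma_w^\vee}(t_{w,D_w(j)}^{-1})$) that its value at $1$ gets multiplied by a scalar which, together with the normalizing factors, recombines into $\beta_{w,D_w(j)}$ of Equation \eqref{central character condition} — a $p$-adic unit because $\pi_w$ is $P_w$-ordinary and $\kappa$ satisfies Inequality \eqref{ineq kappa sigma}.

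By uniqueness of such functions this gives $u^{\flat,\GL}_{w,D_w(j)}\varphi^\flat_w=\beta_{w,D_w(j)}\varphi^\flat_w$, hence $u^{\flat,\GL}_{w,D_w(j)}\phi^\flat_w=\beta_{w,D_w(j)}\phi^\flat_w$, so $\phi^\flat_w$ is a genuine simultaneous eigenvector with $p$-adic unit eigenvalues, i.e. $\tp{P}_w$-ordinary of type $\tau_w^\vee$. The map $\alpha^\vee\mapsto\phi^\flat_w$ is $L_w(\OO_w)$-equivariant by construction, so it is an embedding of $\tau_w^\vee$ into the space of $\tp{P}_w$-ordinary vectors of type $\tau_w^\vee$; that it is onto — so that this space is independent of $r\gg0$ and has dimension $\dim\tau_w^\vee=\dim\tau_w$ — follows, exactly as in the proof of Theorem \ref{Pord structure thm}, from the multiplicity-one statement $\dim\hom_{L_w(\OO_w)}(\tau_w^\vee,\sigma_w^\vee)=1$ of \cite[Proposition 5.6]{BusKut98}, applied to the $G_2$-analogues of Theorems \ref{holo canonical vector} and \ref{Pord structure thm}.

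The step I expect to be the main obstacle is the coset computation for $u^{\flat,\GL}_{w,D_w(j)}$ on $\varphi^\flat_w$: unlike in the setting preceding Lemma \ref{lma U_j}, the Iwahori $\tp{I}_{w,r}$ sits on the \emph{opposite} side of $P_w$ from the parabolic used to induce, so one must check carefully that $\tp{I}_{w,r}\,t_{w,D_w(j)}\subseteq P_w\tp{I}_{w,r}$ (so that the support $P_w\tp{I}_{w,r}$ is preserved) and that the eigenvalue is indeed $\beta_{w,D_w(j)}$. An alternative that bypasses this computation is to realize $\pi_w^\flat$ as the image of the standard intertwining operator $M:\ind{P_w}{G_w}\sigma_w^\vee\to\ind{\tp{P}_w}{G_w}\sigma_w^\vee$ — its image is the Langlands quotient, which by (i) is $\pi_w^\flat$, the chamber condition being guaranteed by the hypothesis that $\pi_w$ is the \emph{unique} irreducible subrepresentation of $\ind{P_w}{G_w}\sigma_w$ — and to apply the $G_2$-analogue of Theorem \ref{holo canonical vector} directly in the model $\ind{\tp{P}_w}{G_w}\sigma_w^\vee$, after checking that $M$ sends $\varphi^\flat_w$ to a nonzero multiple of the big-cell vector there; this trades the coset computation for a computation of $M$ on the big cell together with the attendant chamber bookkeeping.
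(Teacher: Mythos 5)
Your proof is essentially correct and, on the crucial part (ii), converges with the paper's — but you present the paper's route only as a fallback, while your primary route is left as an acknowledged sketch.

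For part (i) you use the self-duality of normalized parabolic induction, $\bigl(\ind{P_w}{G_w}\sigma_w\bigr)^\vee\cong\ind{P_w}{G_w}\sigma_w^\vee$, and dualize the inclusion $\pi_w\hookrightarrow\ind{P_w}{G_w}\sigma_w$; this is a clean, self-contained argument. The paper instead composes the inclusion with the transpose-inverse map $\phi\mapsto\phi(\tp{(\cdot)}^{-1})$ to realize $\pi_w^\flat$ as the unique irreducible subrepresentation of $\ind{\tp{P}_w}{G_w}\sigma_w^\vee$, then passes through the standard intertwining operator to $\ind{P_w}{G_w}\sigma_w^\vee$. Both are valid; yours is arguably the more direct route to (i) and also cleanly yields the supercuspidal support and hence the BK-type of $\pi_w^\flat$.

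For part (ii), your \emph{main} route is a direct double-coset computation of $u^{\flat,\GL}_{w,D_w(j)}$ on the explicit function $\varphi^\flat_w\in\ind{P_w}{G_w}\sigma_w^\vee$ supported on $P_w\tp{I}_{w,r}$. You yourself flag this as the main obstacle, and indeed it is the one step you do not actually carry out: the Iwahori decomposition relative to $P_w$ and $t_{w,D_w(j)}^{-1}$, in the opposite-Iwahori $\tp{I}_{w,r}$, does preserve the support $P_w\tp{I}_{w,r}$ and does produce the eigenvalue $\beta_{w,D_w(j)}$, but this requires a bookkeeping argument symmetric to, yet not literally quoted from, the computation preceding Lemma \ref{lma U_j}; as written this remains a gap. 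Your \emph{alternative} route — realize $\pi_w^\flat$ inside $\ind{\tp{P}_w}{G_w}\sigma_w^\vee$, apply the $\tp{P}_w$-analogue of Theorem \ref{holo canonical vector} (iii) there, and transport to $\ind{P_w}{G_w}\sigma_w^\vee$ via the standard intertwining operator — is precisely what the paper does. One small correction to its phrasing: the vector produced by Theorem \ref{holo canonical vector} inside $\ind{\tp{P}_w}{G_w}\sigma_w^\vee$ has support $\tp{P}_w\tp{I}_{w,r}$, i.e. it is a \emph{small-cell} vector for that model (an open neighborhood of the base point of $\tp{P}_w\backslash G_w$); the intertwining operator carries it to the big-cell vector $\varphi^\flat_w$ in $\ind{P_w}{G_w}\sigma_w^\vee$. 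So the only thing to verify in this route is that the intertwining operator sends the small-cell $\tp{I}_{w,r}$-fixed vector to a nonzero multiple of $\varphi^\flat_w$, which is the easy observation that $P_w^u\cap P_w\tp{I}_{w,r}$ has positive measure and $\varphi^\flat_w$ is constant on it — much less work than the coset calculation your primary route would need. I would promote the intertwining-operator argument from alternative to main, and then your proof is complete and in line with the paper.
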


\begin{proof}
    Consider the composition of $\pi_w \hookrightarrow \ind{P_w}{G_w} \sigma_w$ with the map (of vector spaces)
    \begin{align*}
        \ind{P_w}{G_w} \sigma_w 
            &\to 
                \ind{\tp{P}_w}{G_w} \sigma^\vee_w \\
        \phi 
            &\mapsto
                \phi^\vee(g) := \phi(\tp{g}^{-1})
    \end{align*}

    Its image is $\pi_w^\flat = \pi_w^\vee$ and realizes $\pi_w^\flat$ as the unique irreducible subrepresentation of $\ind{\tp{P}_w}{G_w} \sigma^\vee_w$. In particular, all the consequences of Theorem \ref{holo canonical vector} hold for $\pi_w^\flat$ by replacing $P_w$ by $\tp{P}_w$ and $\sigma_w^\vee$ by $\sigma_w^\vee$. 
    
    Given $\alpha^\vee \in X_w^\vee$ as above, let $\phi_w^\vee \in \pi_w^{\flat, I_{w,r}}$ and $\varphi_w^\vee \in \ind{\tp{P}_w}{G_w} \sigma^\vee_w$ be the vectors obtained from Theorem \ref{holo canonical vector} (iii) associated to $\alpha^\vee$. In particular, $\phi_w^\vee$ is a $\tp{P}_w$-ordinary vector of type $\tau_w^\vee$ and the subspace generated by the action of $L_w(\OO_w)$ on $\phi_w^\vee$ is exactly of all $\tp{P}_w$-ordinary vectors of type $\tau_w^\vee$. In particular, the latter is independent of $r \gg 0$ and isomorphic to $\tau_w^\vee$ over $L_w(\OO_w)$.

    Now, consider the standard intertwining operator $\ind{P_w}{G_w} \sigma^\vee_w \xrightarrow{\sim} \ind{\tp{P_w}}{G_w} \sigma^\vee_w$. It identifies $\pi_w^\flat$ as the unique irreducible quotient of $\ind{\tp{P}_w}{G_w} \sigma^\vee_w$. Furthermore, the vector $\varphi_w^\vee \in \ind{\tp{P_w}}{G_w} \sigma^\vee_w$ exactly corresponds to the vector $\varphi_w^\flat \in \ind{P_w}{G_w} \sigma^\vee_w$ described above. Then, $\phi_w^\flat = \phi_w^\vee$ is the desired vector and this concludes the proof.
\end{proof}

\subsubsection{Anti-holomorphic and $\tp{P}$-anti-ordinary representations for $G_2$.} 
Going back to the discussion of Section \ref{P ord and anti ord on G2}, we know that $\pi_w$ is $P_w$-anti-ordinary of level $r \gg 0$ (for $G_1$) if and only if $\pi_w^\flat$ is $\tp{P}_w$-anti-ordinary of level $r \gg 0$ (for $G_2$). Again, adapting our definitions in Section \ref{Pord rep defn} from $G_1$ to $G_2$, the latter notion requires to change $P_w$ for $\tp{P}_w$ and the double coset operators $U_{w,j}^{\GL, -}$ for $U_{w,j}^{\flat, \GL, -} = \tp{I}_{w,r} t_{w,j} \tp{I}_{w,r}$.

We assume that $\pi_w$ is $P_w$-anti-ordinary of level $r \gg 0$, that $\pi_w$ is the unique irreducible quotient of $\ind{P_w}{G_w} \sigma_w$ for some admissible irreducible supercuspidal representation $\sigma_w$, and $\kappa$ satisfies Inequality \eqref{ineq kappa sigma}. The analogue of Theorem \ref{Pw a ord vector for G1} is the following.

\begin{lemma} \label{P a ord for G2}
    Using the notation above, let $(\tau_w, X_w)$ be the BK-type of $\pi_w$.

    \begin{enumerate}
        \item[(i)] The unique irreducible subrepresentation of $\ind{P_w}{G_w} \sigma_w^\vee$ is isomorphic to $\pi_w^\flat$.
        
        \item[(ii)] Let $(\tau_w^\vee, X_w^\vee)$ be the contragredient of $(\tau_w, X_w)$, the BK-type of $\sigma_w^\vee$. Consider $X_w^\vee$ as a subspace of the vector space associated to $\sigma_w^\vee$, via a fix embedding (unique up to scalar) $\tau_w^\vee \hookrightarrow \sigma_w^\vee$. 
        
        For each $r \gg 0$ and $\alpha \in X_w^\vee$, there exists some unique $\tp{P}_w$-anti-ordinary $\phi^\flat_{w,r} \in \pi_w^{I_r}$ of type $\tau_w^\vee$ and level $r$ such that $\varphi^\flat_{w,r}(1) = \alpha$, where $\varphi^\flat_{w,r}$ is the image of $\phi^\flat_{w,r}$ in $\ind{P_w}{G_w} \sigma^\vee_w$, and the support of $\phi^\flat_{w,r}$ contains $P_w \tp{I}_{w,r}$.

        \item[(iii)] For $r' > r \gg 0$, one can choose $\alpha$, $\alpha' \in X_w^\vee$ such that the vectors $\phi^\flat_{w,r}$ and $\phi^\flat_{w,r'}$ corresponding to $\alpha$ and $\alpha'$ respectively satisfy
        \[
            \sum_{
                \gamma \in \tp{I}_{w,r}/(\tp{I}_{w,r'}^0 \cap \tp{I}_{w,r})
            }
                \pi^\flat_w(\gamma) \phi_{w,r'}^\flat = 
                \phi_{w,r}^\flat
        \]
    \end{enumerate}
\end{lemma}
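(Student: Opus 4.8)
The plan is to deduce the lemma from Theorem \ref{Pw a ord vector for G1} applied to $G_2$ in place of $G_1$ (hence with $\tp{P}_w$ in place of $P_w$), and then to transport the resulting vectors from $\ind{\tp{P}_w}{G_w}\sigma_w^\vee$ back to $\ind{P_w}{G_w}\sigma_w^\vee$ via a standard intertwining operator, exactly along the lines of the proof of Lemma \ref{P ord for G2}.

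Part (i) is immediate: normalized parabolic induction commutes with contragredients, so $(\ind{P_w}{G_w}\sigma_w)^\vee \cong \ind{P_w}{G_w}\sigma_w^\vee$, and since $\pi_w$ is the unique irreducible quotient of $\ind{P_w}{G_w}\sigma_w$ (Remark \ref{sc support assumption}), dualizing shows that $\pi_w^\flat=\pi_w^\vee$ is the unique irreducible subrepresentation of $\ind{P_w}{G_w}\sigma_w^\vee$.

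For part (ii), I would first observe that $\pi_w^\flat$ is the unique irreducible \emph{quotient} of $\ind{\tp{P}_w}{G_w}\sigma_w^\vee$: this follows from part (i) together with the standard fact (valid for supercuspidal inducing data on $\GL_n$) that inducing from the opposite parabolic interchanges the unique irreducible subrepresentation and the unique irreducible quotient; concretely, one realizes $\pi_w^\flat$ inside $\ind{\tp{P}_w}{G_w}\sigma_w^\vee$ via $\phi\mapsto\phi^\vee$, $\phi^\vee(g)=\phi(\tp{g}^{-1})$, as in the proof of Lemma \ref{P ord for G2}. The BK-type of $\pi_w^\flat$ is the contragredient $(\tau_w^\vee,X_w^\vee)$ of that of $\pi_w$, the inequality \eqref{ineq kappa sigma} for $\kappa$ is the one we assumed, and, by the discussion preceding the lemma, $\pi_w^\flat$ is $\tp{P}_w$-anti-ordinary of level $r\gg 0$. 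Hence Theorem \ref{Pw a ord vector for G1}, applied verbatim with $(G_1,P_w,\sigma_w,\tau_w,I_{w,r})$ replaced by $(G_2,\tp{P}_w,\sigma_w^\vee,\tau_w^\vee,\tp{I}_{w,r})$, yields for each $\alpha\in X_w^\vee$ and each $r\gg 0$ a unique vector $\wt{\varphi}_{w,r}\in\ind{\tp{P}_w}{G_w}\sigma_w^\vee$ supported on $\tp{P}_w\tp{I}_{w,r}$, fixed by $\tp{I}_{w,r}$, with $\wt{\varphi}_{w,r}(1)=\alpha$, whose image $\phi_{w,r}^\flat$ in $\pi_w^\flat$ is $\tp{P}_w$-anti-ordinary of type $\tau_w^\vee$ and level $r$. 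Finally, identifying $\pi_w^\flat$ with its copy inside $\ind{P_w}{G_w}\sigma_w^\vee$ from part (i), the composite $\ind{\tp{P}_w}{G_w}\sigma_w^\vee\twoheadrightarrow\pi_w^\flat\hookrightarrow\ind{P_w}{G_w}\sigma_w^\vee$ is, up to a nonzero scalar, the standard intertwining operator; the same integral computation used in \cite[Sections 8.3--8.4]{EHLS} and in the proof of Lemma \ref{P ord for G2} shows that it sends a function supported on $\tp{P}_w\tp{I}_{w,r}$ to one supported on $P_w\tp{I}_{w,r}$ and rescales its value at $1$ by a nonzero constant independent of $\alpha$, so after renormalizing $\alpha$ the image $\varphi_{w,r}^\flat$ of $\wt{\varphi}_{w,r}$ satisfies $\varphi_{w,r}^\flat(1)=\alpha$ and has support $P_w\tp{I}_{w,r}$. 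Uniqueness of $\phi_{w,r}^\flat$ is inherited from Theorem \ref{Pw a ord vector for G1}.

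Part (iii) is then formal: the identity $\sum_\gamma\pi_w^\flat(\gamma)\wt{\varphi}_{w,r'}=\wt{\varphi}_{w,r}$, with $\gamma$ running over $\tp{I}_{w,r}/(\tp{I}_{w,r'}^0\cap\tp{I}_{w,r})$, holds for suitable choices of $\alpha,\alpha'\in X_w^\vee$ by Theorem \ref{Pw a ord vector for G1} (iii) in its $G_2$-form, and since the surjection $\ind{\tp{P}_w}{G_w}\sigma_w^\vee\twoheadrightarrow\pi_w^\flat$, the embedding $\pi_w^\flat\hookrightarrow\ind{P_w}{G_w}\sigma_w^\vee$, and hence the intertwining operator above are all $\tp{I}_{w,r}$-equivariant, the identity descends to the $\phi_{w,r}^\flat$'s. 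The only step that is not bookkeeping is the support-and-normalization computation for the standard intertwining operator in part (ii); this is where I expect the most care to be needed, but it is essentially the same verification carried out in \cite[Sections 8.3--8.4]{EHLS} and in the proof of Lemma \ref{P ord for G2}.
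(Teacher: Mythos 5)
Your proof is correct and follows essentially the same route as the paper: you realize $\pi_w^\flat$ as the unique irreducible quotient of $\ind{\tp{P}_w}{G_w}\sigma_w^\vee$ via the vector-space map $\phi\mapsto\phi^\vee$, apply Theorem \ref{Pw a ord vector for G1} in its $G_2$ form, and then transport the resulting vectors to $\ind{P_w}{G_w}\sigma_w^\vee$ via the standard intertwining operator, with parts (i) and (iii) falling out the same way. The only minor deviation is that your argument for part (i) bypasses the $\tp{P}_w$ side and uses directly that normalized parabolic induction commutes with contragredients to dualize the unique-quotient property into the unique-subrepresentation property, which is a slightly cleaner derivation of the same fact.
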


\begin{proof}
    As in the proof of Lemma \ref{P ord for G2}, the map
    \begin{align*}
        \ind{\tp{P}_w}{G_w} \sigma^\vee_w 
            &\to 
                \ind{P_w}{G_w} \sigma_w \\
        \phi 
            &\mapsto
                \phi^\vee(g) := \phi(\tp{g}^{-1}) \ ,
    \end{align*}
    realizes $\pi_w^\flat = \pi_w^\vee$ as the unique irreducible quotient of $\ind{\tp{P}_w}{G_w} \sigma^\vee_w$. 
    
    In particular, all the consequences of Theorem \ref{Pw a ord vector for G1} hold for $\pi_w^\flat$ by replacing $P_w$ by $\tp{P}_w$ and $\sigma_w$ by $\sigma_w^\vee$. Given $\alpha \in X_w^\vee$ as above, let $\varphi_{w,r}' \in \ind{\tp{P}_w}{G_w} \sigma^\vee_w$ be the vectors obtained from Theorem \ref{Pw a ord vector for G1} associated to $\alpha$.
    
    Furthermore, consider the standard intertwining operator 
    $
        \ind{\tp{P_w}}{G_w} \sigma^\vee_w
            \xrightarrow{\sim} 
        \ind{P_w}{G_w} \sigma^\vee_w 
    $. Its image is both the unique irreducible quotient of $\ind{\tp{P_w}}{G_w} \sigma^\vee_w$, namely $\pi_w^\flat$, and the unique irreducible subrepresentation of $\ind{P_w}{G_w} \sigma^\vee_w$. This proves part (i).
    
    To conclude, let $\phi_{w,r}^\flat$ (resp. $\varphi_{w,r}^\flat$) be the image of $\varphi_{w,r}'$ in $\pi_w^\vee$ (resp. $\ind{P_w}{G_w} \sigma^\vee_w$) via this intertwining operator. The fact that $\phi_{w,r}^\flat$ is $\tp{P}_w$-anti-ordinary of type $\tau_w^\vee$ and level $r$ follows from Theorem \ref{Pw a ord vector for G1} (ii). Similarly, part (iii) follows from Theorem \ref{Pw a ord vector for G1} (iii) (upon making the appropriate adjustement between $G_1$ and $G_2$). The properties of $\varphi_{w,r}'$ are obtained from an easy computation using the definition of $\varphi_{w,r}'$ and the exact formula for the intertwining operator above.
\end{proof}

\begin{remark}
    In Theorem \ref{Pw a ord vector for G1}, Lemma \ref{P ord for G2} and Lemma \ref{P a ord for G2}, more general statement can be made for any type. However, for applications to computations of zeta integrals in forthcoming work of the author, \cite{Mar23c}, the results above only involving the BK-type of a fixed representation are sufficient.
\end{remark}

\section{Comparison of $P$-(anti-)ordinary modular and automorphic forms.} \label{Pord mod forms}
In this section, we work with $G = G_1$ and we use the same notation as in Section \ref{Pord rep defn} without comments. The material here adapts some of the theory of \cite[Section 6.6]{EHLS} for any parabolic subgroup $P$ as in Section \ref{level at p}. 

In particular, we identify integral spaces of $P$-ordinary cusp forms of level $K_r$ with a fixed weight $\kappa$ and $P$-nebentypus $\tau$ as lattices inside certain holomorphic $P$-ordinary cuspidal automorphic representations $\pi$ of weight type $(\kappa, K_r)$ whose BK-type is $\tau$. Using characters of Hecke algebra associated to $\pi$ one can study congruences between such $P$-ordinary cusp forms modulo $p$.

\subsection{Hecke algebras.}
Let $\kappa$ be a dominant character of $T_{H_0}$ and $\tau$ be an irreducible smooth representation of $L_P(\ZZ_p)$. Fix $r \gg 0$ such that $\tau$ factors through $L_P(\ZZ_p/p^r\ZZ_p)$ and let $K = K_r = K^pI_r \subset G(\AA_f)$ be a neat compact open level subgroup. Let $R \subset \CC$ be an $S^0[\tau]$-algebra.

\subsubsection{Hecke algebras on cusp forms.} \label{hecke alg on cusp forms}
As in \cite[Section 2.6.8]{EHLS}, for all $g \in G(\AA_f^p)$, the double coset operator $T_r(g) = [K_rgK_r]$ naturally acts as an endomorphism of $M_\kappa(K_r; R)$. The subspace of cusp forms and the subspace of $P$-nebentypus $\tau$ are both stable under the action of $T_r$. The material of \cite{{EHLS}} only considers the case where $P$ is a Borel subgroup but the same arguments and formulas remain valid in our case using our moduli interpretation of $\EE_{r, \tau}$ from Section \ref{P nebentypus}. This is because $T_r(g)$ only acts on the PEL datum of a given point and not on its $p$-level structure. 

Furthermore, assume that $R$ is a $p$-adic domain. In that case, the arguments of Hida \cite[8.3.1]{Hid04} show that the Hecke operator $u_{w, D_w(j)} = u_{w, D_w(j), \kappa}$ also acts as an endomorphism of $M_\kappa(K_r; R)$, see also \cite[Sections 2.6.9, 2.9.5]{EHLS}. Again, the action of $u_{w, D_w(j)}$ stabilizes the subspace of cusp forms and the subspace of forms with $P$-nebentypus $\tau$.

We now construct the Hecke algebra (of level $K_r$) generated by all Hecke operators at unramified places and at $p$. More precisely, let $l \neq p$ be any prime of $\QQ$ and consider the set $\PP_l$ of all primes of $\KK^+$ above $l$. Write $\PP_l = \PP_{l,1} \coprod \PP_{l,2}$, where $\PP_{l,1}$ is the subset of such primes that split in $\KK$ and $\PP_{l,2}$ is the complement. Therefore, one naturally has an identification
\[
    G(\QQ_l) = \prod_{v \in \PP_{l,1}} \GL_n(\KK_v^+) \times G_{l,2} \ ,
\]
where $G_{l,2}$ is the subgroup of elements $((x_w), t) \in \prod_{w \in \PP_2} \GL_n(\KK_w) \times \QQ_l^\times$ such that each $x_w$ preserve the Hermitian form on $V \times_{\KK} \times \KK_w$ with the same similitude factor $t$. In particular $K_l \subset G(\QQ_l)$ is a product of local factors over all places in $\PP_l$. Let $S_l = S_l(K^l)$ be the subset of $\PP_l$ consisting of all places for which the local factor of $K_l$ is not the maximal hyperspecial subgroup. Let $S_{l, i} = S_l \cap PP_{l, i}$ and define
\[
    G(\QQ_l)^{S_l} = 
    \begin{cases}
        \prod_{
            v \in 
                \PP_{l, 1} \backslash S_{l,1}
        } 
            \GL_n(\KK_v^+) \times G_{l, 2} \ , 
        & 
            \text{if } S_{l, 2} = \emptyset \\
        \prod_{
            v \in 
                \PP_{l, 1} \backslash S_{l,1}
        } 
            \GL_n(\KK_v^+) \ , 
        & 
            \text{otherwise.}
    \end{cases}
\]

Finally, let $S = S(K^p) = \bigcup_{l \neq p} S_l(K^l)$ and define 
\[
    G(\AA_f^S)  = \prod_{l \neq p} G(\QQ_l)^{S_l}\ .
\]

Let $\bTT_{K_r, \kappa, R}$ be the $R$-subalgebra of $\End_\CC(S_\kappa(K_r; \CC))$ generated by the operators $T(g) = T_r(g)$ for all $g \in G(\AA_f)^S$ and $u_{w, D_w(j)}$ for all $w \in \Sigma_p$, $1 \leq j \leq r_w$. Similarly, one defines $\bTT_{K_r, \kappa, \tau, R}$ as the quotient algebra obtained by restricting each operator to an endomorphism of $S_\kappa(K_r, \tau; \CC)$.


\subsubsection{Serre duality and Hecke algebras on anti-ordinary cusp forms.}
Going back to $G = G_1$, the space of \emph{anti-holomorphic} cuspidal forms of weight $\kappa$ and level $K_r$ is defined as
\[
    H^d_{\kappa}(K_r; \CC) := H_!^d(\level{K_r}{\Sh}, \w_{\kappa, r})
\]
and its subspace of $P$-nebentypus $\tau$ is
\[
    H^d_{\kappa}(K_r, \tau; \CC) := H_!^d(\level{K_r}{\Sh}, \w_{\kappa, r, \tau}) \ .
\]

One can define an $R$-integral structure on these spaces by considering the integral models of $\level{K}{\Sh}$. However, we instead follow \cite[Section 6.4.2]{EHLS} and define its integral structure via duality from a normalized Serre duality pairing.

By definition of $\kappa^D$, one can construct a canonical perfect pairing
\[
    \brkt{\cdot}{\cdot}_{\kappa, K_r}^{\text{Ser}} :=
        H^0_!(\level{K_r}{\Sh}(V), \w_{\kappa, r}) 
            \otimes 
        H^d_!(\level{K_r}{\Sh}(V), \w_{\kappa^D, r})
    \to
        \CC
\]

Let $\vol(I_r^0)$ be the volume of $K_r^0 = K^pI_r^0$ with respect to the Tamagawa measure $dg$ from \cite[Section 6.3]{EHLS}. We define $\brkt{\cdot}{\cdot}_{\kappa, K_r}$ as the \emph{normalized Serre pairing}
\begin{align*}
        H^0_!(\level{K_r}{\Sh}(V), \w_{\kappa, r}) 
            &\otimes 
        H^d_!(\level{K_r}{\Sh}(V), \w_{\kappa^D, r})
    \to
        \CC \\
    \brkt{\cdot}{\cdot}_{\kappa, K_r} &:=
        \frac{1}{\vol(I_r^0)} 
        \brkt{\cdot}{\cdot}_{\kappa, K_r}^{\text{Ser}}
\end{align*}

This identifies $H^d_{\kappa}(K_r; \CC)$ as the dual of $S_{\kappa}(K_r; \CC)$, and via this identification we define
\[
    H^d_{\kappa}(K_r; R) := \hom_R(S_{\kappa}(K_r; R), R)
\]

Similarly, $H^d_{\kappa}(K_r, \tau; R)$ is defined by replacing $S_{\kappa}(K_r; R)$ with $S_{\kappa}(K_r, \tau^\vee; R)$. Then, one defines the $R$-Hecke algebra $\bTT_{K_r, \kappa, R}^d$ by proceeding as in the definition of $\bTT_{K_r, \kappa, R}^d$ but replacing $S_\kappa(K_r; R)$ with $H^d_{\kappa}(K_r; R)$ and $u_{w, D_w(j)}$ by $u^-_{w, D_w(j)}$. Upon restriction to $H^d_{\kappa}(K_r; R)$, one obtains the quotient algebra $\bTT_{K_r, \kappa, \tau, R}^d$.

\begin{lemma} \label{iso holo hecke alg and aholo hecke alg}
    Let $R \subset \CC$ be an $S_0$-algebra as above. There exists a unique $R$-algebra isomorphism $\bTT_{K_r, \kappa, R} \xrightarrow{\sim} \bTT^d_{K_r, \kappa^D, R}$ such that $u_{w, D_w(j)}$ is mapped to $u_{w, D_w(j)}^-$ and $T(g)$ to $||\nu(g)||^{a(\kappa)} \cdot T(g^{-1})$. If $R$ is an $S_0[\tau]$-algebra, it induces an isomorphism of $R$-algebra $\bTT_{K_r, \kappa, \tau, R} \xrightarrow{\sim} \bTT^d_{K_r, \kappa^D, \tau, R}$.
\end{lemma}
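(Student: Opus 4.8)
The plan is to deduce the statement from the behaviour of Hecke operators under the normalized Serre duality pairing, exactly as in \cite[Sections 6.4.2 and 6.6]{EHLS}; the only new point is that these properties survive the replacement of the Borel-type level structure by the $P$-Iwahori level structure $K_r = K^pI_r$, which they do since the double coset operators act only on the PEL datum of a point and not on its $p$-level structure. First I would record that, by the way the integral structures were set up, the normalized Serre pairing $\brkt{\cdot}{\cdot}_{\kappa, K_r}$ restricts to a perfect $R$-bilinear pairing between $S_\kappa(K_r; R)$ and $H^d_{\kappa^D}(K_r; R)$ (the latter being defined as $\hom_R(S_\kappa(K_r; R), R)$), and more precisely pairs the $P$-nebentypus component $S_\kappa(K_r, \tau^\vee; R)$ with $H^d_{\kappa^D}(K_r, \tau; R)$. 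Perfectness gives, for each $R$-linear endomorphism $T$ of $S_\kappa(K_r; R)$, a unique adjoint $T^\dagger \in \End_R(H^d_{\kappa^D}(K_r; R))$ with $\brkt{Tf}{g}_{\kappa, K_r} = \brkt{f}{T^\dagger g}_{\kappa, K_r}$, and $T \mapsto T^\dagger$ is an $R$-algebra anti-isomorphism of the two endomorphism algebras that respects the decomposition into nebentypus components.

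The heart of the argument is the computation of $T^\dagger$ on the generators of $\bTT_{K_r, \kappa, R}$, namely
\[
    T(g)^\dagger = ||\nu(g)||^{a(\kappa)}\, T(g^{-1}) \quad (g \in G(\AA_f^S)), \qquad
    u_{w, D_w(j)}^\dagger = u^-_{w, D_w(j)} \quad (w \in \Sigma_p,\ 1 \le j \le r_w).
\]
The first identity is the standard behaviour of prime-to-$p$ double coset operators under Serre duality: the twist $||\nu||^{a(\kappa)}$ bookkeeps the isomorphism $\w_{\kappa^D} \cong \w_\kappa^\vee \otimes (\text{Serre bundle})$ together with the action of the similitude character on the dualizing sheaf, and it is proved exactly as in \cite[Section 6.4.2]{EHLS}. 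For the second, one uses that $t^-_{w, D_w(j)} = (t^+_{w, D_w(j)})^{-1}$, so that the raw operator $U_{w, D_w(j)} = [K_r t^+_{w, D_w(j)} K_r]$ has Serre adjoint a scalar multiple of $U^-_{w, D_w(j)} = [K_r t^-_{w, D_w(j)} K_r]$; the normalizing factors $\absv{\kappa'(t_{w, D_w(j)})}_p^{\mp 1}$ in the definitions of $u_{w, D_w(j)}$ and $u^-_{w, D_w(j)}$ were chosen precisely so that the two normalized operators become honest adjoints of one another. I expect this adjunction computation to be the only step requiring genuine (if routine) verification, and it is where one must check that nothing breaks in passing from Borel to $P$-Iwahori level — but the relevant Bruhat decompositions and Haar-measure normalizations are insensitive to this change.

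Granting the formulas, one concludes formally. Both $\bTT_{K_r, \kappa, R}$ and $\bTT^d_{K_r, \kappa^D, R}$ are commutative: the operators $T(g)$ for $g \in G(\AA_f^S)$ generate a quotient of the commutative spherical Hecke algebra away from $S$ and $p$, and the $u_{w, D_w(j)}$ (respectively $u^-_{w, D_w(j)}$) commute with one another and with all the $T(g)$. Hence the anti-isomorphism $T \mapsto T^\dagger$, which by the displayed formulas carries the chosen generators of $\bTT_{K_r, \kappa, R}$ onto the chosen generators of $\bTT^d_{K_r, \kappa^D, R}$, restricts to a surjective $R$-algebra homomorphism $\bTT_{K_r, \kappa, R} \to \bTT^d_{K_r, \kappa^D, R}$, uniquely determined by the prescribed images of the generators. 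Applying the same construction to the opposite leg of the Serre pairing (the twist $||\nu||^{2a(\kappa)}$ arising from a double application being absorbed into the same bookkeeping) produces a two-sided inverse, so the map is an isomorphism.

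Finally, since every operator in sight preserves the $P$-nebentypus decomposition and the Serre pairing matches the $\tau^\vee$-component of $S_\kappa(K_r; R)$ with the component $H^d_{\kappa^D}(K_r, \tau; R)$, restricting the above isomorphism to nebentypus components yields the asserted isomorphism $\bTT_{K_r, \kappa, \tau, R} \xrightarrow{\sim} \bTT^d_{K_r, \kappa^D, \tau, R}$ whenever $R$ is an $S_0[\tau]$-algebra. The only part of this plan with real content is the pair of adjunction formulas of the second paragraph; everything else is bookkeeping and abstract nonsense about adjoints of operators on dual modules.
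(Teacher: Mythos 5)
Your proposal is correct and takes essentially the same approach as the paper, whose proof of this lemma simply defers to \cite[Lemma 6.6.1(i)]{EHLS} and notes that it is an immediate consequence of Serre duality; you have unpacked the Serre-duality/adjoint argument and the reasons it carries over from Borel to $P$-Iwahori level, which is exactly what the cited reference does.
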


\begin{proof}
    The proof is exactly the same as the one of Lemma 6.6.1 (i) in \cite{EHLS}. It is an immediate consequence of Serre duality.
\end{proof}

\subsubsection{Automorphic representations as Hecke modules}
In what follows, for all the Hecke algebras $\bTT_{\bullet}^?$, let $\bTT_{\bullet}^{?, p}$ denote the $R$-subalgebra generated only by the operators $T(g)$ for $g \in G(\AA_f^S)$. Moreover, we omit the subscript $R$ when it $R = S^0$ (or $S^0[\tau]$). We also use the notation from Section \ref{comp cohomology theories} without comments.

Let $\pi$ be a holomorphic cuspidal automorphic representation of $G$ of weight type $(\kappa, K_r)$. Recall that it is defined over some number field $E(\pi)$ containing $\KK'$, see Remark \ref{E pi rational}. Recall the definition of $S = S(K^p)$ above and consider the factorization
\[
    \pi = \pi_\infty \otimes \pi_f \ \ \ ; \ \ \ \pi_f = \pi_p \otimes \pi_S \otimes \pi_f^S \ ,
\]

By definition, $K^S$ is the factor of $K^p$ over all places of $\KK^+$ where $K^p$ contains an hyperspecial maximal subgroup. In particular, $(\pi_f^S)^{K^S}$ is a 1-dimensional space spanned by an $E(\pi)$-rational spherical vector. The natural action of $T(g)$ for all $g \in G(\AA_f^S)$ on $\pi_f^{K_r}$ is through its action as a character on $\pi_f^{K^S}$. In other words, this defines a character $\lambda_\pi^p$ of $\bTT_{K_r, \kappa}^p$. Although this definition technically depends on $r$ these homomorphisms are compatible as $r$ increases in the obvious sense, hence we do not include it in the notation of $\lambda_\pi^p$. 

Now, fix a choice of $E(\pi)$-rational spherical vector in $\pi_f^{S}$ and a choice of basis for the $1$-dimensional $H^0(\P_h, K_h; \pi_\infty \otimes W_\kappa)$. Let $S_\kappa(K_r, \CC)(\pi)$ be the $\lambda_\pi^p$-isotypic subspace of $S_\kappa(K_r, \CC)$ as a $\bTT_{K_r, \kappa}^p$-module. Then, the isomorphism \eqref{H! vs H(b, K)} induces an embedding
\[
    j_\pi : 
        \pi_S^{K_S} 
            \otimes 
        \pi_p^{I_r} 
    \hookrightarrow 
        S_\kappa(K_r, \CC)(\pi)
\]
of $\bTT_{K_r, \kappa}^p$-module.

\begin{hypothesis}[Multiplicity one] \label{mult one hyp}
    We say that $\pi$ satisfies the \emph{multiplicity one hypothesis (for $\pi$)} if for any holomorphic cuspidal $\pi' \neq \pi$ of type $(\kappa, K_r)$, the characters $\lambda^p_{\pi'}$ and $\lambda^p_{\pi}$ are distinct.
\end{hypothesis} 

One immediately obtains that if $\pi$ satisfies the multiplicity one hypothesis, then the embedding $j_\pi$ is in fact an isomorphism.

\subsection{$P$-ordinary case.} \label{Hecke alg for Pord}
In this section, we extend the study of the isomorphism $j_\pi$ by also considering the action of the Hecke operator at $p$. To do so, we assume that $R \subset \CC$ is the localization of a finite $S^0$-algebra (or $S^0[\tau]$-algebra when considering a fixed $P$-nebentypus $\tau$) at the maximal ideal determined by $\incl_p$ or that $\iota_p(R)$ is $p$-adically complete (in the latter case, we say that $R$ is a \emph{$p$-adic algebra}).

If $R$ is a $p$-adic algebra, the $P$-ordinary projector $e_{P, \kappa} = e_\kappa$ defined in Section \ref{Pord rep defn} has a well-defined action on $\bTT_{K_r, \kappa, R}$ and we set $\bTT^{\Pord}_{K_r, \kappa, R} := e_\kappa\bTT_{K_r, \kappa, R}$. Similarly, let $\bTT^{\Pord}_{K_r, \kappa, \tau, R} := e_\kappa\bTT_{K_r, \kappa, \tau, R}$. These are equal to the quotient algebras obtained from $\bTT_{K_r, \kappa, R}$ and $\bTT_{K_r, \kappa, \tau, R}$ upon restriction of the operators to the (stable) subspaces $S_\kappa^{\Pord}(K_r; R) := e_\kappa S_\kappa(K_r; R)$ and $S_\kappa^{\Pord}(K_r, \tau; R) := e_\kappa S_\kappa(K_r, \tau; R)$.

Similarly, when $R$ is not $p$-adic, we can define the latter spaces as the intersection of $S_\kappa(K_r; R)$ and $S_\kappa(K_r, \tau; R)$ with the $P$-ordinary spaces over the ($p$-adic) completion of $\incl_p(R)$. 

Assume the holomorphic representation $\pi$ from Section \ref{hecke alg on cusp forms} is $P$-ordinary at $p$. Assume that $\pi_p$ satisfies the Hypothesis \ref{Qw equals Pw} on supercuspidal support as in Section \ref{BK types of Pord rep}. Therefore it has a well-defined BK-type $\tau$. Let $\pi_p^{(P, \tau)}$ be the subspace of $P$-ordinary vectors in $\pi_p^{I_r}$ of type $\tau$, as in Theorem \ref{Pord structure thm}.

The Hecke algebra $\bTT_{K_r, \kappa, R}$ acts on $\pi_p^{(P, \tau)} \otimes \pi^{p, K^p}$ via a character $\lambda_\pi$ that extends $\lambda_\pi^p$. Clearly, the character $\lambda_\pi$ factors through $\bTT^{\Pord}_{K_r, \kappa, \tau, R}$. Let $E(\lambda_\pi)$ be the finite extension of $E(\pi)$ generated by the values of $\lambda_\pi$ and let $R(\lambda_\pi)$ be the localization of the ring of integers of $E(\lambda_\pi)$ at the maximal ideal determined by $\incl_p$. One readily sees that $\lambda_\pi$ is $R(\lambda_\pi)$-valued.

Let $\ol{\lambda}_\pi$ be the reduction of $\lambda_\pi$ modulo the maximal ideal of $R(\lambda_\pi)$, viewed as a character valued in $\ol{\ZZ}_{(p)}$. Denote its kernel by $\m_\pi$ and let
\begin{align*}
    \SSS(K_r, \kappa, \tau, \pi) = 
    \left\{
        \substack{
            \text{ordinary holomorphic cuspidal $\pi'$ of weight type $(\kappa, K_r)$,} \\
            \text{satisfying Hypothesis \ref{Qw equals Pw} with BK type $\tau$, such that $\ol{\lambda}_\pi = \ol{\lambda}_{\pi'}$}
        }
    \right\}
\end{align*}

Clearly, the condition $\ol{\lambda}_\pi = \ol{\lambda}_{\pi'}$ is equivalent to $\m_\pi = \m_{\pi'}$.

\begin{lemma} \label{Pord forms as lattices}
    Let $\pi$ be a holomorphic $P$-ordinary cuspidal automorphic form of weight type $(\kappa, K_r)$ and BK-type $\tau$ as above. Suppose that $\pi$ satisfies the multiplicity one Hypothesis \ref{mult one hyp}. Let $R \subset \CC$ be the localization of a finite extension of $R(\lambda_\pi)$ or the $p$-adic completion of such a ring. Let $E = R[\frac{1}{p}]$.
    \begin{enumerate}
        \item[(i)] Let $S_\kappa^{\Pord}(K_r, \tau; E)[\lambda_\pi] = e_\kappa S_\kappa(K_r, \tau; E)[\lambda_\pi]$, where $[\lambda_\pi]$ denotes $\lambda_\pi$-isotypic component. Then, $j_\pi$ restricts to an isomorphim
        \[
            j_\pi : 
                \pi_p^{(P, \tau)} \otimes \pi_S^{K_S}
            \xrightarrow{\sim}
                S_\kappa^{\Pord}(K_r, \tau; E)[\lambda_\pi] \otimes_E \CC
        \]
        
        \item[(ii)] Let $S_\kappa^{\Pord}(K_r, \tau; R)_{\pi}$ be the localization of $S_\kappa^{\Pord}(K_r, \tau; R)$ at $\m_\pi$ and
        \[
            S_\kappa^{\Pord}(K_r, \tau; R)[\pi] 
            := 
                S_\kappa^{\Pord}(K_r, \tau; R)_{\pi}
                    \cap
                S_\kappa^{\Pord}(K_r, \tau; E)[\lambda_\pi]
        \]

        Then, $j_\pi$ identifies $S_\kappa^{\Pord}(K_r, \tau; R)[\pi]$ as an $R$-lattice in $\pi_p^{(P, \tau)} \otimes \pi_S^{K_S}$. Similarly, $S_\kappa^{\Pord}(K_r, \tau; R)_{\pi}$ is identified with an $R$-lattice in
        \[
            \bigoplus \limits_{\pi' \in \SSS(K_r, \kappa, \tau, \pi)} (\pi'_p)^{(P, \tau)} \otimes (\pi'_S)^{K_S}
        \]
        via $\oplus_{\pi'} j_{\pi'}$.
    \end{enumerate}
\end{lemma}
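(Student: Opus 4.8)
The plan is to run the argument of \cite[Lemma 6.6.2]{EHLS} (see also \cite[Section 6.6]{EHLS}), with the $P$-Iwahori input supplied by the results of the previous sections, splitting into the two parts of the statement.

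For part (i), the first step is to note that the embedding $j_\pi$ of Section \ref{hecke alg on cusp forms} is equivariant not only for $\bTT^{p}_{K_r,\kappa}$ but also for the Hecke operators $u_{w,D_w(j)}$ at $p$: on the automorphic side these act through the double coset operators $u^{\GL}_{w,D_w(j)}$ on the factor $\pi_p^{I_r}$ of $\pi_f^{K_r}$, matching their geometric action, so $j_\pi$ is $\bTT_{K_r,\kappa,R}$-equivariant and in particular commutes with the idempotent $e_\kappa$. Under Hypothesis \ref{mult one hyp}, $j_\pi$ is an isomorphism onto $S_\kappa(K_r;\CC)(\pi)$; applying $e_\kappa$ and restricting to the part of $P$-nebentypus $\tau$ (equivalently, the $\tau$-isotypic part for the $L_P(\ZZ_p)$-action) on both sides gives an isomorphism $\pi_p^{(P,\tau)}\otimes\pi_S^{K_S}\xrightarrow{\sim}e_\kappa S_\kappa(K_r,\tau;\CC)(\pi)$, the target being by definition the $\lambda^{p}_\pi$-isotypic part of $S^{\Pord}_\kappa(K_r,\tau;\CC)$. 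I would then identify this with the $\lambda_\pi$-isotypic part: one inclusion is automatic since $\lambda_\pi$ extends $\lambda^{p}_\pi$, and for the other I would invoke Hypothesis \ref{Qw equals Pw} together with the remark following Theorem \ref{holo canonical vector}, which says that each $u_{w,D_w(j)}$ acts on the $P_w$-ordinary vectors by the determined scalar $\beta_{w,D_w(j)}$; hence $\bTT^{\Pord}_{K_r,\kappa,\tau,R}$ acts on $\pi_p^{(P,\tau)}\otimes\pi_S^{K_S}$ through the single character $\lambda_\pi$ and the two isotypic subspaces coincide. Since the Hecke operators are $R$-rational and $\lambda_\pi$ is $E$-valued, this identification descends to $E$ and base changes to $\CC$, which is exactly the displayed isomorphism.

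For part (ii), I would first record that $R$ is a discrete valuation ring with fraction field $E$, so it suffices to produce the relevant finite torsion-free $R$-modules and check flat base change. The space $S_\kappa(K_r,\tau;R)$ is $H^0$ of the coherent sheaf $\w^{\sub}_{\kappa,r,\tau}$ on the proper $R$-scheme $\level{K_r}{\ol{\Sh}}$, hence finitely generated over $R$; it is torsion-free, as a submodule of $S_\kappa(K_r,\tau;E)$, hence free over the DVR $R$; and the coherent base-change/vanishing statements of \cite{Lan12,Lan13} used in \cite[Sections 2.4, 6.6]{EHLS} give $S_\kappa(K_r,\tau;R)\otimes_R E = S_\kappa(K_r,\tau;E)$. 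Applying $e_\kappa$, the direct summand $S^{\Pord}_\kappa(K_r,\tau;R)$ is a free $R$-lattice in $S^{\Pord}_\kappa(K_r,\tau;E)$. Since $\bTT^{\Pord}_{K_r,\kappa,\tau,R}$ is finite over the local ring $R$, one then passes to the $\m_\pi$-localization exactly as in \cite[Lemma 6.6.2]{EHLS}: combining the decomposition of $S_\kappa(K_r;\CC)$ over holomorphic cuspidal representations with part (i) applied to each constituent (on which $\bTT^{\Pord}_{K_r,\kappa,\tau,R}$ acts through $\lambda_{\pi'}$ by the same argument), the condition $\ol\lambda_{\pi'}=\ol\lambda_\pi$ that defines $\SSS(K_r,\kappa,\tau,\pi)$ is precisely the condition that the $\pi'$-constituent survives in the $\m_\pi$-localization; so $\bigoplus_{\pi'}j_{\pi'}$ identifies $S^{\Pord}_\kappa(K_r,\tau;R)_{\pi}$ with an $R$-lattice in $\bigoplus_{\pi'\in\SSS(K_r,\kappa,\tau,\pi)}(\pi'_p)^{(P,\tau)}\otimes(\pi'_S)^{K_S}$. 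Intersecting with the $\lambda_\pi$-isotypic $E$-subspace, which by multiplicity one for $\pi$ is exactly $j_\pi(\pi_p^{(P,\tau)}\otimes\pi_S^{K_S})$, cuts out $S^{\Pord}_\kappa(K_r,\tau;R)[\pi]$ as an $R$-lattice in $\pi_p^{(P,\tau)}\otimes\pi_S^{K_S}$.

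The main obstacle I anticipate is the geometric input in part (ii): verifying that the coherent-cohomology base-change and freeness statements $S_\kappa(K_r,\tau;R)\otimes_R E\xrightarrow{\sim}S_\kappa(K_r,\tau;E)$ and finiteness over $R$ — established in \cite{Lan12,Lan13,EHLS} for Borel-level structure — go through verbatim for the $\LL_r$-torsor tower $\level{K_r}{\ol{\Sh}}$ and the sheaves $\w_{\kappa,r,\tau}^{\sub}$ built in Section \ref{P nebentypus}, which should be routine since the level change only affects the $p$-torsor and not the ambient toroidal compactification. A secondary point requiring care is matching the Hecke-module decomposition on the automorphic side with the $\m_\pi$-localization on the geometric side; here it is essential that, under Hypothesis \ref{Qw equals Pw}, the $u_{w,D_w(j)}$ act by the explicit scalars $\beta_{w,D_w(j)}$ on each $P$-ordinary constituent, so that $\bTT^{\Pord}_{K_r,\kappa,\tau,R}$ acts through a genuine character and congruences mod the maximal ideal of $R(\lambda_\pi)$ are detected by $\ol\lambda_\pi$ — which is exactly what the remark after Theorem \ref{holo canonical vector} provides. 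The rest is bookkeeping parallel to \cite[Section 6.6]{EHLS}.
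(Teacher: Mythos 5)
The paper leaves Lemma~\ref{Pord forms as lattices} without an explicit proof, so there is nothing to compare against directly; the expected proof is the one you outline, namely a transcription of \cite[Lemma~6.6.2]{EHLS} to the $P$-Iwahori setting, and your proposal identifies the right ingredients: Hecke-equivariance of $j_\pi$ at $p$, the isomorphism onto $S_\kappa(K_r;\CC)(\pi)$ under Hypothesis~\ref{mult one hyp}, the fact (from the remark after Theorem~\ref{holo canonical vector}) that the $u_{w,D_w(j)}$ act through scalars on the $(P,\tau)$-ordinary subspace so that $\bTT^{\Pord}_{K_r,\kappa,\tau,R}$ acts through the single character $\lambda_\pi$, coherent base change and finiteness of $S_\kappa(K_r,\tau;R)$ over the DVR $R$, and the identification of the $\m_\pi$-localization with $\SSS(K_r,\kappa,\tau,\pi)$. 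Your flag about the geometric input is the correct one: the level change at $p$ only replaces $\level{K}{\Sh^\tor}$ by the finite \'etale $\LL_r$-torsor $\level{K_r}{\ol{\Sh}}$, so properness, coherence, and flat base change for $\w^\sub_{\kappa,r,\tau}$ go through, and this is where a complete write-up needs to cite \cite{Lan12,Lan13} for the general level.

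One point deserving attention in a full write-up: for part (ii) you apply part (i) to each $\pi'\in\SSS(K_r,\kappa,\tau,\pi)$, but the stated Hypothesis~\ref{mult one hyp} only separates $\lambda^p_\pi$ from all other $\lambda^p_{\pi'}$; it does not a priori separate two distinct $\pi',\pi''\in\SSS(K_r,\kappa,\tau,\pi)$ from each other. To make the last identification in part (ii) an honest isomorphism onto the displayed direct sum one needs (as is implicitly assumed in \cite{EHLS} as well) that each $\pi'$ in the set also has its $\lambda^p_{\pi'}$-eigenspace equal to $j_{\pi'}\bigl((\pi'_p)^{(P,\tau)}\otimes(\pi'_S)^{K_S}\bigr)$, i.e.\ one should either strengthen the hypothesis to separate all the $\lambda^p_{\pi'}$, or argue that the $\bTT^{\Pord}_{K_r,\kappa,\tau,R}$-module $S^{\Pord}_\kappa(K_r,\tau;R)_\pi$ is semisimple and decomposes into $\lambda_{\pi'}$-isotypic pieces directly. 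This is a statement-level subtlety inherited from the source, not a flaw in your argument; otherwise your proposal is sound.
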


\subsection{$P$-anti-ordinary case.} \label{Hecke alg for Paord}
In this section, we carry a similar analysis as in the previous section for anti-holomorphic and $P$-anti-ordinary representations. 

Assume that $R \subset \CC$ is as in the beginning of Section \ref{Hecke alg for Pord}. Then, one can define $H_\kappa^{d, \Paord}(K_r; R)$ and $H_\kappa^{d, \Paord}(K_r, \tau; R)$ by replacing $S_\kappa(K_r; R)$ and $S_\kappa(K_r, \tau; R)$ with $H_\kappa^{d}(K_r; R)$ and $H_\kappa^{d}(K_r, \tau; R)$ respectively, and $e_\kappa$ by $e_\kappa^-$ (see Section \ref{a.holo and a.ord on G1}). Restriction of the operators in $\bTT^d_{K_r, \kappa, R}$ to these \emph{$P$-anti-ordinary subspaces} yields $\bTT^{d, \Paord}_{K_r, \kappa, R} := e_\kappa \bTT^{d}_{K_r, \kappa, R}$ and $\bTT^{d, \Paord}_{K_r, \kappa, \tau, R} := e_\kappa \bTT^{d}_{K_r, \kappa, \tau, R}$ as quotient $R$-algebras. The following is obvious from the definitions.
\begin{lemma} \label{iso Pord hecke alg and Paord hecke alg}
    For $R$ as above, the isomorphisms of Lemma \ref{iso holo hecke alg and aholo hecke alg} induce $R$-algebra isomorphisms
    \[
        \bTT^{\Pord}_{K_r, \kappa, R}
            \xrightarrow{\sim}
        \bTT^{d, \Paord}_{K_r, \kappa^D, R}
        \ \ \ \text{and} \ \ \
        \bTT^{\Pord}_{K_r, \kappa, \tau, R}
            \xrightarrow{\sim}
        \bTT^{d, \Paord}_{K_r, \kappa^D, \tau^\vee, R}
    \]
\end{lemma}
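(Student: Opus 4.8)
The plan is to deduce both isomorphisms formally from Lemma~\ref{iso holo hecke alg and aholo hecke alg} by transporting the ordinary and anti-ordinary projectors through it. I would begin by recalling, from Sections~\ref{Pord rep defn} and~\ref{a.holo and a.ord on G1}, that
\[
    e_{\kappa} = e_{P,\kappa} = \varinjlim_n u_{P,p,\kappa}^{n!}, \qquad u_{P,p,\kappa} = \prod_{w \in \Sigma_p}\ \prod_{j=1}^{r_w} u_{w,D_w(j),\kappa},
\]
and that $e^-_{\kappa^D}$ is the entirely parallel limit of powers of $\prod_{w,j} u^-_{w,D_w(j),\kappa^D}$. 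Since $R$ is $p$-adically complete (or, when it is not, we replace it by its $p$-adic completion exactly as in the definition of the $P$-ordinary and $P$-anti-ordinary subspaces) and the Hecke algebras $\bTT_{K_r,\kappa,R}$ and $\bTT^d_{K_r,\kappa^D,R}$ are finite $R$-modules, these limits stabilize and converge inside the respective algebras; this is precisely the standard construction of Hida's ordinary projector already used throughout the paper, and it shows in particular that $e_\kappa \in \bTT_{K_r,\kappa,R}$ and $e^-_{\kappa^D} \in \bTT^d_{K_r,\kappa^D,R}$. I would also record the elementary fact that any homomorphism of $R$-algebras between finite $R$-modules is automatically continuous for the $p$-adic topology.

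With this in hand, let $\Phi \colon \bTT_{K_r,\kappa,R} \xrightarrow{\sim} \bTT^d_{K_r,\kappa^D,R}$ denote the isomorphism of Lemma~\ref{iso holo hecke alg and aholo hecke alg}, which by construction sends $u_{w,D_w(j)}$ to $u^-_{w,D_w(j)}$ for every $w \in \Sigma_p$ and $1 \le j \le r_w$. Continuity and multiplicativity of $\Phi$ then give
\[
    \Phi(e_\kappa) = \varinjlim_n \Phi\bigl(u_{P,p,\kappa}\bigr)^{n!} = \varinjlim_n \Bigl(\textstyle\prod_{w,j} u^-_{w,D_w(j),\kappa^D}\Bigr)^{n!} = e^-_{\kappa^D},
\]
so $\Phi$ restricts to an isomorphism $e_\kappa \bTT_{K_r,\kappa,R} \xrightarrow{\sim} e^-_{\kappa^D}\bTT^d_{K_r,\kappa^D,R}$; that is, $\bTT^{\Pord}_{K_r,\kappa,R} \xrightarrow{\sim} \bTT^{d,\Paord}_{K_r,\kappa^D,R}$, the first assertion. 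For the second, I would run the same computation after restricting to the relevant $P$-nebentypus components: the operators $u_{w,D_w(j)}$ commute with the residual action of $I_r^0/I_r \cong L_P(\ZZ_p/p^r\ZZ_p)$, hence so do $e_\kappa$ and $e^-_{\kappa^D}$, so these projectors preserve the $P$-nebentypus gradings of $S_\kappa(K_r;R)$ and of $H^d_{\kappa^D}(K_r;R)$ and commute with the $\tau$-graded refinement of $\Phi$. The appearance of $\tau^\vee$ rather than $\tau$ on the anti-ordinary side is exactly the one already built into the definitions: the normalized Serre pairing matches holomorphic forms of $P$-nebentypus $\tau$ with anti-holomorphic forms of $P$-nebentypus $\tau^\vee$ (compare the definition of $H^d_\kappa(K_r,\tau;R)$ and Section~\ref{conv on contra pairings}). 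Applying $\Phi(e_\kappa) = e^-_{\kappa^D}$ on these components yields $\bTT^{\Pord}_{K_r,\kappa,\tau,R} \xrightarrow{\sim} \bTT^{d,\Paord}_{K_r,\kappa^D,\tau^\vee,R}$.

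The only point that is not purely formal — and hence the ``hard part,'' though it is genuinely mild — is the justification that the limits defining $e_\kappa$ and $e^-_{\kappa^D}$ stabilize inside the \emph{integral} Hecke algebras and are preserved by $\Phi$; this reduces to the finiteness of Hecke algebras of a fixed level acting on finite-dimensional spaces of cusp forms together with the $p$-integrality of the eigenvalues of the operators $u_{w,D_w(j)}$, both of which are recorded earlier in the paper. Granting these, the lemma is a formal consequence of Lemma~\ref{iso holo hecke alg and aholo hecke alg}, in agreement with the remark preceding its statement.
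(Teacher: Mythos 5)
Your argument is correct and is essentially the same as the paper's, which dismisses the proof with the phrase ``The following is obvious from the definitions'': you make that precise by noting that the isomorphism of Lemma~\ref{iso holo hecke alg and aholo hecke alg} sends $u_{w,D_w(j)}$ to $u^-_{w,D_w(j)}$, hence (by continuity on these finite $R$-modules after $p$-adic completion) carries $e_{P,\kappa}$ to $e^-_{P,\kappa^D}$ and so restricts to the truncated algebras, and the $\tau$/$\tau^\vee$ bookkeeping follows from the duality conventions on $P$-nebentypi built into the normalized Serre pairing. Your supporting observation that the $u_{w,D_w(j)}$ commute with $I_r^0/I_r \cong L_P(\ZZ_p/p^r\ZZ_p)$ (because the $t_{w,D_w(j)}^\pm$ are central in $L_P(\QQ_p)$), so the projectors respect the $P$-nebentypus grading, is also the right justification for passing to the $\tau$-graded pieces.
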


Proceeding as in the previous sections, let $\pi$ be a holomorphic cuspidal automorphic representation of weight type $(\kappa, K_r)$. Then, $\pi^\flat$ is anti-holomorphic of type $(\kappa, K_r)$. 

Again, the choice of an $E(\pi)$-rational spherical vector in $\pi_f^{\flat, S}$ and a choice of basis for the $1$-dimensional $H^d(\P_h, K_h; \pi_\infty^\flat \otimes W_{\kappa^D})$ induces an inclusion
\[
    j_{\pi^\flat}^\vee : 
        \pi_S^{\flat, K_S} 
            \otimes 
        \pi_p^{\flat, I_r} 
    \hookrightarrow 
        H^d_{\kappa^d}(K_r; \CC)
\]

Furthermore, if we assume that $\pi$ is $P$-ordinary and satisfies Hypothesis \ref{Qw equals Pw}, hence has a well-defined BK-type $\tau$. In that case, $\pi^\flat$ is $P$-anti-ordinary and determines a character $\lambda_{\pi^\flat}$ of $\bTT_{K_r, \kappa^D, \tau^\vee}^{d, \Paord}$ as well as a maximal ideal $\m_{\pi^\flat}$. Via the isomorphism 
$
    \bTT^{\Pord}_{K_r, \kappa, \tau, R}
        \xrightarrow{\sim}
    \bTT^{d, \Paord}_{K_r, \kappa^D, \tau^\vee, R} \ ,
$
we have $\lambda_{\pi^\flat} = \lambda_{\pi}$ and $\m_\pi = \m_{\pi^\flat}$. Let $\pi_{p, r}^{\flat, (P, \tau^\vee)}$ denote the subspace of $\pi_p^{\flat, I_r}$ of $P$-anti-ordinary vector of level $r$ and type $\tau^\vee$ (see Corollary \ref{Paord structure thm}).

\begin{lemma} \label{Paord forms as lattices}
    Let $\pi$, $\kappa$, $K_r$, $\tau$, $R$ and $E$ be as in Lemma \ref{Pord forms as lattices}. Let $H_{\kappa^D}^{d, \Paord}(K_r, \tau^\vee; R)_{\pi}$ be the localization of $H_{\kappa^D}^{d, \Paord}(K_r, \tau^\vee; R)$ at $\m_{\pi}$ and let
        \[
            H_{\kappa^D}^{d, \Paord}(K_r, \tau^\vee; R)[\pi] 
            := 
                H_{\kappa^D}^{d, \Paord}(K_r, \tau^\vee; R)_{\pi}
                    \cap
                H_{\kappa^D}^{d, \Paord}(K_r, \tau^\vee; E)[\lambda_\pi] \ ,
        \]
    where $[\lambda_pi]$ denotes $\lambda_\pi$-isotypic subspace again. Then, if $\pi$ satisfies the multiplicity one Hypothesis \ref{mult one hyp},
    \begin{enumerate}
        \item[(i)] The inclusion $j_{\pi^\flat}^\vee$ restricts to an isomorphism
        \[
            \pi_S^{\flat, K_S} 
                \otimes 
            \pi_{p, r}^{\flat, (P, \tau^\vee)} 
                \hookrightarrow 
            H^d_{\kappa^d, \Paord}(K_r, \tau; E)[\lambda_\pi] \otimes_E \CC
        \]
        
        \item[(ii)] The map $j_{\pi^\flat}^\vee$ identifies $H_{\kappa^D}^{d, \Paord}(K_r, \tau^\vee; R)[\pi]$ with an $R$-lattice in 
        $
            \pi_S^{\flat, K_S} 
                \otimes 
            \pi_{p, r}^{\flat, (P, \tau^\vee)}
        $. Furthermore, $H_{\kappa^D}^{d, \Paord}(K_r, \tau^\vee; R)_{\pi}$ is identified with an $R$-lattice in
        \[
            \bigoplus_{\pi' \in \SSS(K_r, \kappa, \tau, \pi)}
                (\pi'_S)^{\flat, K_S} 
                    \otimes 
                (\pi'_{p, r})^{\flat, (P, \tau^\vee)}
        \]
        via $\oplus_{\pi'} j_{(\pi')^\flat}^\vee$.

        \item[(iii)] The normalized Serre duality pairing induces a perfect $\bTT^{\Pord}_{K_r, \kappa, \tau, R}$-equivariant pairings
        \begin{align*}
            S_{\kappa}^{\Pord}(K_r, \tau; R)[\pi] 
                \otimes_R 
            H_{\kappa^D}^{d, \Paord}(K_r, \tau^\vee; R)[\pi]
                \to 
            R \ &\text{and} \\
            S_{\kappa}^{\Pord}(K_r, \tau; R)_{\pi} 
                \otimes_R 
            H_{\kappa^D}^{d, \Paord}(K_r, \tau^\vee; R)_{\pi}
                \to 
            &R \ .
        \end{align*}
    \end{enumerate}
\end{lemma}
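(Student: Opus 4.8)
The plan is to deduce all three parts from the holomorphic statement of Lemma~\ref{Pord forms as lattices} by transporting it through the normalized Serre duality pairing $\brkt{\cdot}{\cdot}_{\kappa, K_r}$ and the Hecke-algebra isomorphisms of Lemmas~\ref{iso holo hecke alg and aholo hecke alg} and~\ref{iso Pord hecke alg and Paord hecke alg}. First I would record the integral, $P$-refined form of Serre duality: by construction $H_{\kappa^D}^{d}(K_r,\tau^\vee;R)=\hom_R(S_\kappa(K_r,\tau;R),R)$, and the anti-ordinary idempotent $e_\kappa^-$ is the $R$-linear transpose of the ordinary idempotent $e_\kappa$ under this pairing; the latter holds because $K_r t_{w,D_w(j)}^{-1} K_r$ and $K_r t_{w,D_w(j)} K_r$ are transposes of one another and the normalizing factors $|\kappa'(t_{w,j})|_p$ and $|\kappa'(t_{w,j})|_p^{-1}$ are inverse, so $U^-_{w,D_w(j)}$ and $U_{w,D_w(j)}$, hence $u^-_{w,D_w(j)}$ and $u_{w,D_w(j)}$, hence $e_\kappa^-$ and $e_\kappa$, are mutually adjoint --- the same computation that proves Lemma~\ref{iso holo hecke alg and aholo hecke alg}. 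Consequently $H^{d, \Paord}_{\kappa^D}(K_r,\tau^\vee;R)=\hom_R(S^{\Pord}_\kappa(K_r,\tau;R),R)$ and $\bTT^{d,\Paord}_{K_r,\kappa^D,\tau^\vee,R}\cong\bTT^{\Pord}_{K_r,\kappa,\tau,R}$, with $\m_{\pi^\flat}$ corresponding to $\m_\pi$ and $\lambda_{\pi^\flat}$ to $\lambda_\pi$. Since $T(g)\mapsto||\nu(g)||^{a(\kappa)}T(g^{-1})$ is injective on $\bTT_{K_r,\kappa}^p$, the multiplicity one Hypothesis~\ref{mult one hyp} for $\pi$ is equivalent to its analogue for $\pi^\flat$, so $\lambda_\pi=\lambda_{\pi^\flat}$ singles out $\pi^\flat$ among anti-holomorphic cuspidal representations of weight type $(\kappa^D,K_r)$ satisfying Hypothesis~\ref{Qw equals Pw}.

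For part~(i), the $R$-linear dual of the isomorphism of Lemma~\ref{Pord forms as lattices}(i) exhibits $H^{d, \Paord}_{\kappa^D}(K_r,\tau^\vee;E)[\lambda_\pi]\otimes_E\CC$ as the $\CC$-linear dual of $\pi_p^{(P,\tau)}\otimes\pi_S^{K_S}$. Independently, $j_{\pi^\flat}^\vee$ embeds $\pi_S^{\flat,K_S}\otimes\pi_p^{\flat,I_r}$ into $H_{\kappa^D}^{d}(K_r;\CC)$, and applying $e_\kappa^-$ and passing to the $\tau^\vee$- and $\lambda_\pi$-isotypic parts has image $\pi_S^{\flat,K_S}\otimes\pi_{p,r}^{\flat,(P,\tau^\vee)}$, which is nonzero by Corollary~\ref{Paord structure thm}. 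The two descriptions agree: Theorem~\ref{Pw a ord vector for G1}(i), together with the self-duality of the unramified vectors at the places of $S$, puts $\pi_{p,r}^{\flat,(P,\tau^\vee)}\otimes\pi_S^{\flat,K_S}$ and $\pi_p^{(P,\tau)}\otimes\pi_S^{K_S}$ in perfect duality, and the normalization of $\brkt{\cdot}{\cdot}_{\kappa,K_r}$ by $\vol(I_r^0)$ is chosen precisely so that this global duality restricts, under $j_\pi$ and $j_{\pi^\flat}^\vee$, to the product over $w\mid p$ of the local pairings $\brkt{\cdot}{\cdot}_{\pi_w}$ normalized by $\vol(I_{w,r}^0)$. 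Hence the restricted $j_{\pi^\flat}^\vee$ is surjective, proving~(i).

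Part~(ii) is then a standard lattice argument run integrally. The module $H^{d, \Paord}_{\kappa^D}(K_r,\tau^\vee;R)=\hom_R(S^{\Pord}_\kappa(K_r,\tau;R),R)$ is finitely generated and torsion-free over $R$ and has the expected base change to $E$; localizing at $\m_\pi$ and intersecting with the $\lambda_\pi$-isotypic rational subspace, part~(i) shows $H^{d, \Paord}_{\kappa^D}(K_r,\tau^\vee;R)[\pi]$ spans $\pi_S^{\flat,K_S}\otimes\pi_{p,r}^{\flat,(P,\tau^\vee)}$ over $\CC$, hence is an $R$-lattice there. For the full localization, the idempotent of $\bTT^{d,\Paord}_{K_r,\kappa^D,\tau^\vee,R}$ cutting it out corresponds under the algebra isomorphism to the one in Lemma~\ref{Pord forms as lattices}(ii), whose associated decomposition is indexed by $\SSS(K_r,\kappa,\tau,\pi)$; dualizing that decomposition and invoking part~(i) for every $\pi'\in\SSS(K_r,\kappa,\tau,\pi)$ gives the identification via $\bigoplus_{\pi'}j_{(\pi')^\flat}^\vee$.

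For part~(iii) I would restrict the perfect $R$-pairing $\brkt{\cdot}{\cdot}_{\kappa,K_r}\colon S_\kappa(K_r,\tau;R)\otimes_R H_{\kappa^D}^{d}(K_r,\tau^\vee;R)\to R$: it remains perfect after applying $e_\kappa$ on the left and $e_\kappa^-$ on the right (these being mutual transposes), and after localizing at $\m_\pi$ respectively restricting to the $\lambda_\pi$-isotypic parts (since $\m_\pi$ corresponds to $\m_{\pi^\flat}$). Equivariance means $\brkt{Tf}{g}_{\kappa,K_r}=\brkt{f}{\iota(T)g}_{\kappa,K_r}$ for $T\in\bTT^{\Pord}_{K_r,\kappa,\tau,R}$, where $\iota$ is the isomorphism of Lemma~\ref{iso Pord hecke alg and Paord hecke alg}; this holds on the generators $T(g)$ and $u_{w,D_w(j)}$ by the two adjunctions recalled in the first paragraph. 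I expect the genuine obstacle to be exactly those adjunction and normalization facts from the first two paragraphs --- that the normalized Serre pairing truly intertwines $e_\kappa$ with $e_\kappa^-$ and the $L_P(\ZZ_p)$-isotypic pieces for $\tau$ and $\tau^\vee$, and that its $\vol(I_r^0)$-normalization matches the product of the local pairings of Theorem~\ref{Pw a ord vector for G1}(i); granting these, the rest is formal from Lemma~\ref{Pord forms as lattices} and elementary lattice considerations.
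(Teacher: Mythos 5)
The paper does not write out a proof of Lemma~\ref{Paord forms as lattices} (nor of Lemma~\ref{Pord forms as lattices}); both are stated as direct consequences of the normalized Serre duality pairing, Lemma~\ref{iso Pord hecke alg and Paord hecke alg}, and the structure theorems, following the pattern of \cite[Section 6.6]{EHLS}. Your proposal is a correct and complete realization of the intended argument: it correctly uses that $e_\kappa^-$ is the transpose of $e_\kappa$ under $\brkt{\cdot}{\cdot}_{\kappa,K_r}$ (this is the content of Lemma~\ref{iso holo hecke alg and aholo hecke alg} at the idempotent level), correctly tracks the $\tau\leftrightarrow\tau^\vee$ and $\m_\pi\leftrightarrow\m_{\pi^\flat}$ correspondences, and correctly reduces part~(i) to Lemma~\ref{Pord forms as lattices}(i) together with the local duality of Theorem~\ref{Pw a ord vector for G1}(i). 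You are right that the only genuine computational content is the match between the global normalization $\vol(I_r^0)^{-1}$ in the Serre pairing and the local factors $\vol(I_{w,r}^0)$ appearing in Theorem~\ref{Pw a ord vector for G1}(i) (together with the away-from-$p$ spherical contributions and the archimedean normalization), and that the Hecke-algebra isomorphism intertwines the idempotents; these are precisely the items the paper leaves implicit. One small housekeeping remark: you should also note, as you do only implicitly, that to apply Theorem~\ref{Pw a ord vector for G1}(i) here you are using it with $\pi_w$ replaced by $\pi_w^\vee$ and $\tau_w$ by $\tau_w^\vee$, since the pairing in part~(iii) is between $P$-ordinary vectors of type $\tau$ in $\pi_p$ and $P$-anti-ordinary vectors of type $\tau^\vee$ in $\pi_p^\flat\cong\pi_p^\vee\otimes\|\nu\|^{a(\kappa)}$.
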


\bibliography{references.bib}
\bibliographystyle{amsalpha}

\end{document}